\documentclass[11pt]{article}
\usepackage{tikz}
\usetikzlibrary{positioning, arrows.meta, shapes.geometric}
\usepackage[a4paper, margin=2.3cm]{geometry}
\usepackage{amsmath,amssymb,amsthm, comment, enumitem, array}
\usepackage{color}
\usepackage{svg}
\usepackage{parskip}
\usepackage{hyperref}
\usepackage{setspace}
\usepackage{graphicx}
\usepackage{mathtools}
\usepackage[thinc]{esdiff}
\usepackage[euler]{textgreek}

\usepackage{babel}

\usepackage[capitalise, nameinlink]{cleveref}
\crefname{equation}{}{}
\crefname{figure}{{\sc Figure}}{{\sc Figure}}
\crefname{subsection}{Subsection}{Subsections}

\newtheorem{theorem}{Theorem}[section]
\newtheorem{example}{Example}[section]

\newtheorem{remark}[theorem]{Remark}

\newtheorem{lemma}[theorem]{Lemma}

\newtheorem{definition}[theorem]{Definition}

\newtheorem*{definition*}{Definition}

\newcommand{\F}{\mathbb{F}}

\newcommand{\PP}{\mathbb{P}}
\newcommand{\HH}{\mathbb{H}}
\newcommand{\cD}{\mathcal{D}}
\newcommand{\cL}{\mathcal{L}}

\newcommand{\Fq}{\mathbb{F}_q}

\newcommand{\bfp}{\mathbf{p}}
\newcommand{\bfv}{\mathbf{v}}

\begin{document}

\title{Horizontal Kakeya maximal operators in finite Heisenberg groups: Exact exponents and applications\thanks{Keywords: horizontal Kakeya maximal operators, finite Heisenberg groups, Refined directions\newline
\hspace*{0.45cm} MSC: primary 42B25; Secondary: 43A75, 05B25, 51E20}}
\author{Thang Pham\thanks{Institute of Mathematics and Interdisciplinary Sciences, Xidian University. \newline
\hspace*{0.45cm} Email: {\tt thangpham.math@gmail.com}} \and Andrea Pinamonti \thanks{Department of Mathematics, University of Trento, via Sommarive 14, 38123 Povo, Italy. \newline
\hspace*{0.45cm} Email: {\tt andrea.pinamonti@unitn.it}}\and Dung The Tran \thanks{VNU University of Science, Hanoi, Vietnam. ~Email: {\tt tranthedung56@gmail.com}}\and Boqing Xue \thanks{Corresponding author, Institute of Mathematical Sciences, ShanghaiTech University. \newline
\hspace*{0.45cm} Email: {\tt xuebq@shanghaitech.edu.cn}}}
\date{}
\maketitle

{  \begin{abstract}
Let \(q\) be an odd prime power. We study Kakeya maximal operators associated with horizontal lines in the finite Heisenberg groups \(\mathbb H_n(\mathbb F_q)\). Our principal object is the refined-direction maximal operator, whose parameter records the projective horizontal direction together with the central homogeneous coordinate determined by horizontality. In rank one, we prove
\[
\|\mathcal M_{\mathbb H_1}^{\mathrm{rd}}F\|_{\ell^2(\mathcal D_1)}
\lesssim q^{\frac{1}{2}}
\|F\|_{\ell^2(\mathbb H_1(\mathbb F_q))},
\]
where the exponent \(\frac{1}{2}\) is sharp. Combining this estimate with endpoint bounds and interpolation, we determine the exact mixed-norm growth exponent:
\[
A^{\mathrm{rd}}_1(u,v)
=
\max\left\{
\frac1v,\,
1-\frac1u,\,
\frac2v-\frac1u,\,
1+\frac2v-\frac3u
\right\},
\qquad 1\le u,v\le\infty.
\]
As a consequence, if \(E\subset\mathbb H_1(\mathbb F_q)\) meets, in at least \(m\) points, a horizontal line in each refined direction from
\(\Omega\subset\mathcal D_1\), then
\[
|E|\gtrsim \frac{m^2|\Omega|}{q}.
\]

As a benchmark, we also analyze the coarser operator parameterized only by projective horizontal directions and determine its exact \(\ell^u\to\ell^v\) growth exponent in every rank. In rank one, this benchmark is established by a self-contained \(TT^*\) argument rather than polynomial vanishing, and the same planar estimate reappears as the zero-central-frequency component of the refined-direction proof. The nonzero central frequencies are controlled by Plancherel, character orthogonality, and a bounded-fiber property of an explicit quadratic map. Thus the sharp refined-direction estimate is obtained by purely Fourier-analytic methods.
\end{abstract}}
\section{Introduction}

The classical Kakeya needle problem asks for the smallest planar region in which a unit line
segment can be rotated through all directions. While the original formulation is geometric, a major modern development is the study of \emph{Kakeya (Besicovitch) sets}:
subsets of $\mathbb{R}^n$ that contain a unit line segment in every direction. Besicovitch \cite{Besicovitch1928}
showed that such sets can have Lebesgue measure zero, revealing that containing all
directions is compatible with extreme measure-theoretic thinness.

Despite this, Kakeya sets are widely expected to be large in a finer sense. The
\emph{Kakeya set conjecture} predicts that any Kakeya set $E\subset\mathbb{R}^n$ must have
Hausdorff and Minkowski dimension~$n$. This conjecture is a central organizing problem because
it quantifies the limits of how efficiently long thin objects in many directions can overlap,
and it is intertwined with major questions in harmonic analysis.

\medskip
\noindent\textbf{The Kakeya maximal operator.}
A convenient analytic formulation uses tubes at scale $\delta\in(0,1)$. Let $T$ denote the
$\delta$-neighbourhood of a unit line segment (so $T$ has length $1$ and cross-sectional
radius $\delta$). For $f\in L^1_{\mathrm{loc}}(\mathbb{R}^n)$ and $e\in\mathbb{S}^{n-1}$,
define the \emph{Kakeya maximal operator}
\begin{equation}\label{eq:euclid-kakeya-max}
K_\delta f(e)\ :=\ \sup_{T\parallel e}\ \frac{1}{|T|}\int_T |f(\mathbf{x})|\,d \mathbf{x},
\end{equation}
where the supremum runs over all such tubes $T$ with axis parallel to $e$.
The (scale-invariant) \emph{Kakeya maximal operator conjecture} asserts that
\begin{equation}\label{eq:euclid-kakeya-conj}
\|K_\delta f\|_{L^n(\mathbb{S}^{n-1})}\ \lesssim_{\varepsilon,n}\ \delta^{-\varepsilon}\,
\|f\|_{L^n(\mathbb{R}^n)}\qquad\text{for all }\varepsilon>0,
\end{equation}
and is closely related (indeed essentially equivalent, after discretization) to quantitative
overlap bounds for direction-separated tube families. Such bounds imply sharp lower estimates
for the Minkowski/Hausdorff size of Kakeya sets.

In dimension $n=2$, C\'ordoba \cite{Cordoba1977} proved sharp $L^2$-type estimates for $K_\delta$. In higher dimensions, Wolff's seminal work \cite{Wolff1995}
obtained improved $L^p$ bounds for Kakeya-type maximal operators, yielding strong lower bounds
for the possible dimension of Kakeya sets and setting the stage for much subsequent progress. Kakeya sets also arise as obstructions in Fourier analysis; for instance, Fefferman's counterexample to the ball multiplier problem is historically linked to Besicovitch-type configurations \cite{Fefferman1971}. Recent progress has clarified parts of the Euclidean picture, especially in three dimensions: Katz and Zahl \cite{KatzZahl2019} obtained an improved lower bound $\dim_H(E)\ge \frac52+\varepsilon_0$ for Besicovitch sets in $\mathbb{R}^3$, and Wang and Zahl \cite{WangZahl2025} proved the three-dimensional Kakeya conjecture.

\medskip
\noindent\textbf{The finite field model (abelian).}
A useful companion viewpoint replaces Euclidean space by a vector space 
over a finite field. Let $\mathbb{F}_q$ be the field with $q$ elements. 
A set $K \subset \mathbb{F}_q^n$ is a \emph{finite field Kakeya set} if it 
contains a full affine line in every direction: for every $[\mathbf{v}] \in 
\mathbb{P}^{n-1}(\mathbb{F}_q)$, there exists 
$\mathbf{y} \in \mathbb{F}_q^n$ such that
\[
\{\mathbf{y} + s\mathbf{v} : s \in \mathbb{F}_q\} \subset K.
\]
The finite field analogue of the Kakeya conjecture (see \cite{MT04} for example) asks whether such a set must have size comparable to the whole space, i.e.\ $|K| \ge c_n q^n$ with 
$c_n$ independent of $q$. In \cite{Dvir2009}, Dvir's breakthrough resolution of this problem introduced the polynomial method into Kakeya theory. 
Building on this method, Ellenberg, Oberlin, and Tao \cite{EOT} proved sharp Kakeya-type maximal estimates associated to algebraic varieties over finite 
fields, highlighting how algebraic structure may replace multiscale harmonic analysis.

While highly effective, the polynomial method relies fundamentally on algebraic vanishing. A natural complementary objective is to develop Fourier-analytic and geometric approaches that do not rely on algebraic vanishing, and therefore, extend to settings lacking strong algebraic structure.

In this paper, we pursue this program in the model setting of the finite Heisenberg group.

\medskip
\noindent\textbf{Kakeya sets and maximal operators in the Heisenberg group.}
In the sub-Riemannian geometry of the first Heisenberg group $\mathbb{H}_1$, a natural
analogue of the Kakeya problem asks how small a set can be if it contains horizontal unit line segments in every horizontal direction. Studying Kakeya phenomena in the Heisenberg group provides a natural test case for understanding which aspects of classical Kakeya behavior depend on Euclidean structure and which persist in a nonabelian setting with anisotropic dilations. The interaction between the group law, anisotropic dilations, and the horizontal distribution produces genuinely different geometric and analytic phenomena, making the Heisenberg group a natural setting for testing which Kakeya phenomena
persist beyond the Euclidean framework. Here the geometry is that determined by the canonical generating directions (the first layer of the stratification), and size is measured in terms of Hausdorff dimension with respect to a homogeneous metric such as the Kor\'{a}nyi metric.

In this direction, Liu \cite{Liu22} introduced a notion of Heisenberg Kakeya set and proved the sharp lower
bound $\dim_{\mathbb{H}}(E)\ge 3$ for Kakeya sets in $\mathbb{H}_1$, with sharpness obtained
(under this definition) by the horizontal plane. A complementary analytic
viewpoint is provided by Kakeya-type maximal operators: Venieri \cite{Venieri14} showed that, in analogy with
the Euclidean theory, suitable $L^p$ bounds for a Kakeya maximal operator imply lower bounds
for the Heisenberg Hausdorff dimension of (bounded) Besicovitch/Kakeya sets.
More recently, F\"assler, Pinamonti, and Wald \cite{FPW25} defined Heisenberg Kakeya maximal operators $M_\delta$
by averaging over $\delta$-neighbourhoods of horizontal unit segments (Kor\'{a}nyi tubes) and
proved an essentially optimal estimate in $\mathbb{H}_1$, recovering Liu's sharp dimension
bound as an application. Related techniques appear in \cite{FP22,HS,Zhang24}.

These developments motivate a finite field Heisenberg Kakeya problem adapted to horizontal geometry. Our aim is to develop a discrete counterpart of Heisenberg maximal-operator theory.

\medskip
\noindent\textbf{The finite field Heisenberg maximal operator.}
Let $q$ be an odd prime power. We identify the Heisenberg group $\mathbb{H}_n(\mathbb{F}_q)$, of rank $n$ and over $\mathbb{F}_q$,  with
\[
\mathbb{H}_n(\mathbb{F}_q):=\mathbb{F}_q^n\times \mathbb{F}_q^n\times \mathbb{F}_q
\]
equipped with the group law
\[
(\mathbf{x}, \mathbf{y}, t)\cdot(\mathbf{x}', \mathbf{y}', t')
=
\Bigl(\mathbf{x}+\mathbf{x}',\,\mathbf{y}+ \mathbf{y}',\,
t+t'+\bigl(\mathbf{x}\cdot \mathbf{y}'-\mathbf{y}\cdot \mathbf{x}'\bigr)\Bigr),
\]
where $\mathbf{x}\cdot \mathbf{y}'$ denotes the standard dot product on $\mathbb{F}_q^n$.
A \emph{horizontal direction} is an element $[\mathbf{v}]\in \mathbb{P}^{2n-1}(\mathbb{F}_q)$,
represented by a nonzero vector $\mathbf{v}=(\mathbf{a},\mathbf{b})\in
\mathbb{F}_q^n\times\mathbb{F}_q^n$ up to scaling. For
$\mathbf{p}=(\mathbf{x}_0,\mathbf{y}_0,t_0)\in \mathbb{H}_n(\mathbb{F}_q)$ and
$\mathbf{v}=(\mathbf{a},\mathbf{b})\ne \mathbf{0}$, we define the horizontal line through $\mathbf{p}$ in direction $[\mathbf{v}]$ as the right coset
\[
L_{\mathbf{p}, [\mathbf{v}]} := \{\, \mathbf{p}\cdot (s\mathbf{a}, s\mathbf{b}, 0) : s \in \mathbb{F}_q \,\}.
\footnote{When the basepoint $\mathbf{p}$ is not essential, we may write $L_{[\mathbf{v}]}$ by an abuse of notation. In certain proofs, we also adopt the notation $L_{\omega, \tau}$ for horizontal lines where their defining formula is clear from context.}\]
A direct computation gives the parametrization
\begin{equation}\label{eq:line-param-main}
L_{\mathbf{p}, [\mathbf{v}]}
=
\Bigl\{\bigl(\mathbf{x}_0+s\mathbf{a},\ \mathbf{y}_0+s\mathbf{b},\
t_0+ s(\mathbf{x}_0\cdot\mathbf{b}-\mathbf{y}_0\cdot\mathbf{a})\bigr):\, s \in \mathbb{F}_q\Bigr\},
\end{equation}
and hence $|L_{\mathbf{p}, [\mathbf{v}]}|=q$. 
The term $s(\mathbf{x}_0\cdot\mathbf{b}-\mathbf{y}_0\cdot\mathbf{a})$ is the \emph{Heisenberg twist}: it depends on the basepoint $(\mathbf{x}_0,\mathbf{y}_0)$
and couples it to the direction $[\mathbf{v}]$ via the underlying symplectic form.
This twist is what distinguishes horizontal Kakeya from the standard affine Kakeya
problem on $\mathbb{F}_q^{2n+1}$.

We now define two maximal operators associated with horizontal lines.
They differ in how much directional information they retain, and the gap between them drives the main results of this paper.

Throughout this paper, our maximal operators are defined using sums along lines
(not averages). The exponent formulas below reflect this definition.

To avoid notational conflict, we consistently use $[\mathbf{v}]$ for projective directions and the letters
$u,v$ for exponents.

\smallskip

For $F:\mathbb{H}_n(\mathbb{F}_q)\to\mathbb{C}$, we define the associated maximal operator
\[
\mathcal{M}_{\mathbb{H}_n}F([\mathbf{v}])
:=
\max_{\mathbf{p}\in\mathbb{H}_n(\mathbb{F}_q)}\ \sum_{\mathbf x\in L_{\mathbf{p}, [\mathbf{v}]}} |F(\mathbf x)|,
\qquad
[\mathbf{v}]\in \mathbb{P}^{2n-1}(\mathbb{F}_q).
\]

This operator remembers only the spatial direction $[\mathbf{v}]=[\mathbf{a}:\mathbf{b}]$; in particular, it forgets the third projective coordinate of the whole direction. To capture the full Heisenberg geometry, we introduce a finer notion.

{ 
\begin{definition}\label{def:refined-dir-set}
Define the refined direction set
\[
\mathcal D_n
:=
\bigl\{[\mathbf a:\mathbf b:c]\in\mathbb P^{2n}(\mathbb F_q):\ (\mathbf a,\mathbf b)\neq(\mathbf 0,\mathbf 0)\bigr\}.
\]
\end{definition}

\begin{definition}\label{def:refined-direction}
Let \(L\) be a horizontal line in \(\mathbb H_n(\mathbb F_q)\). Choose a point
\[
\mathbf p_0=(\mathbf x_0,\mathbf y_0,t_0)\in L
\]
and choose a representative
\[
(\mathbf a,\mathbf b)\neq(\mathbf 0,\mathbf 0)
\]
of the spatial direction of \(L\). Define the refined direction of \(L\) by
\[
\mathrm{Dir}(L)
:=
[\mathbf a:\mathbf b:\mathbf x_0\cdot\mathbf b-\mathbf y_0\cdot\mathbf a]
\in \mathcal D_n.
\]
\end{definition}
This definition is independent of the choices. Indeed, replacing \(\mathbf p_0\) by another point of \(L\) changes
\((\mathbf x_0,\mathbf y_0)\) to \((\mathbf x_0+s\mathbf a,\mathbf y_0+s\mathbf b)\), and hence
\[
(\mathbf x_0+s\mathbf a)\cdot\mathbf b-(\mathbf y_0+s\mathbf b)\cdot\mathbf a
=
\mathbf x_0\cdot\mathbf b-\mathbf y_0\cdot\mathbf a.
\]
On the other hand, replacing the representative \((\mathbf a,\mathbf b)\) by \((\lambda\mathbf a,\lambda\mathbf b)\), where \(\lambda\in\mathbb F_q^\times\), multiplies all three homogeneous coordinates by \(\lambda\). Therefore the projective class \(\mathrm{Dir}(L)\) is well-defined.
}

Note that \(\mathcal D_n=\mathbb P^{2n}(\mathbb F_q)\setminus\{[0:\cdots:0:1]\}\), i.e.\ the projective space with the single vertical direction removed.
Moreover, for every \([\mathbf a:\mathbf b:c]\in\mathcal D_n\), there exists a horizontal line \(L\subset \mathbb H_n(\mathbb F_q)\) with
\(\mathrm{Dir}(L)=[\mathbf a:\mathbf b:c]\). The set $\mathcal{D}_n$ fibers over $\mathbb{P}^{2n-1}(\mathbb{F}_q)$ via the
projection $[\mathbf{a}:\mathbf{b}:c]\mapsto[\mathbf{a}:\mathbf{b}]$,
with each fiber having~$q$ elements.

\begin{definition}\label{def:slope-refined-kakeya}
A set \(E\subset \mathbb H_n(\mathbb F_q)\) is called a full-direction horizontal Heisenberg Kakeya set if for every
\(\omega\in\mathcal D_n\) there exists a horizontal line \(L\subset E\) with \(\mathrm{Dir}(L)=\omega\).
\end{definition}

Although \(\mathbb H_n(\mathbb F_q)\) and \(\mathbb F_q^{2n+1}\) coincide as sets, the preceding notion is not the same as the usual affine Kakeya
property in \(\mathbb F_q^{2n+1}\): in the Heisenberg setting one prescribes refined directions of horizontal lines, whereas affine Kakeya prescribes
ambient directions of arbitrary affine lines. See Examples~\ref{ex:affine-not-refined} and~\ref{ex:refined-not-affine}.

For \(F:\mathbb H_n(\mathbb F_q)\to\mathbb C\), we define the full refined-direction horizontal maximal operator
\begin{equation}\label{eq:Mrd}
(\mathcal{M}^{\mathrm{rd}}_{\mathbb H_n}F)(\omega)
:=
\max_{\substack{L\ \mathrm{horizontal}\\ \mathrm{Dir}(L)=\omega}}
\ \sum_{\mathbf p\in L}|F(\mathbf p)|,
\qquad
\omega\in\mathcal D_n.
\end{equation}
{ 
In particular, the refined-direction operator discriminates among horizontal lines that share the same spatial direction but have different last homogeneous refined coordinates. Indeed, 
\[
\mathcal{M}_{\mathbb H_n}F([\mathbf a:\mathbf b])
=
\max_{[\mathbf a:\mathbf b:c]\in\mathcal D_n}
\mathcal{M}^{\mathrm{rd}}_{\mathbb H_n}F([\mathbf a:\mathbf b:c]).
\]
}

We now make the following observation. The projection $\pi(\mathbf{x},\mathbf{y},t)=(\mathbf{x},\mathbf{y})$
maps each horizontal line $L_{\mathbf{p},[\mathbf{v}]}$ bijectively onto an affine line in $\mathbb{F}_q^{2n}$ with the same projective direction~$[\mathbf{v}]$.
By aggregating $F$ along the $t$-fibers one obtains a pointwise domination
$\mathcal{M}_{\mathbb{H}_n}F\leq M_{2n}G_{F,u}$,
where $M_{2n}$ is the affine (abelian) Kakeya maximal operator on $\mathbb{F}_q^{2n}$ and $G_{F,u}(\mathbf{x},\mathbf{y}):=
\|F(\mathbf{x},\mathbf{y},\cdot)\|_{\ell^u(\mathbb{F}_q)}$.\footnote{For a finite set $X$ and $1\leq p\leq \infty$, we use $\ell^p(X)$ to denote the normed vector space consisting of complex-valued functions on $X$, equipped with the standard $\ell^p$-norm based on the counting measure.} Direct computations reveal that
sharp upper bounds for $\mathcal{M}_{\mathbb{H}_n}$ follow from existing abelian estimates. Note that this reduction is not reversible (see Remark~\ref{goodremark}), since the projection forgets the $t$-twist. In particular, there are functions $F$ for which the projected abelian maximal operator is of size $q$ in every direction, whereas $\mathcal M_{\mathbb H_1}F$ remains uniformly $O(1)$. We therefore view the sharp exponent formulas for $\mathcal{M}_{\mathbb{H}_n}$
as a reference estimate: they quantify what can be proved when one insists on the projective direction parameter alone, and they motivate the
refined-direction framework.

For $1\le u,v\le \infty$, let $A_n(u,v)$ be the smallest non-negative real number, independent of the field size, such that there
exists $C_{n,u,v}>0$ with
\begin{equation}\label{eq:An_def}
\|\mathcal{M}_{\mathbb{H}_n}F\|_{\ell^v(\mathbb{P}^{2n-1}(\mathbb{F}_q))}
\le
C_{n,u,v}\, q^{A_n(u,v)}\,
\|F\|_{\ell^u(\mathbb{H}_n(\mathbb{F}_q))}
\end{equation}
for all $F:\mathbb{H}_n(\mathbb{F}_q)\to\mathbb{C}$.

\medskip

{ 
As a benchmark for the refined-direction theory, we first determine the exact value of \(A_1(u,v)\) for all \(1\leq u,v\leq\infty\).}

\begin{theorem} \label{thm:exact_A}
For all $1\le u,v\le\infty$, one has
\begin{equation}\label{eq:A-max-formula}
A_1(u,v)
=
\max\Bigl\{\frac1v,\ 1-\frac1u,\ 1+\frac1v-\frac{2}{u}\Bigr\}.
\end{equation}
\end{theorem}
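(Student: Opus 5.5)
Since $A_1(u,v)$ is a maximum of three affine functions of $(1/u,1/v)$, the argument has two halves: an upper bound at a few vertex exponents followed by Riesz--Thorin interpolation, and three lower-bound examples, one per term of the maximum. Throughout, $|\mathbb{P}^1(\mathbb{F}_q)|\sim q$ and $|\mathbb{H}_1(\mathbb{F}_q)|=q^3$.

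\textbf{Upper bound.} The endpoint estimates are as follows.
\begin{itemize}
\item $\ell^\infty\to\ell^\infty$ with constant $q$, since a line sum of $|F|$ over $q$ points is at most $q\|F\|_\infty$.
\item $\ell^1\to\ell^\infty$ with constant $1$, since a single line sum is at most $\|F\|_1$.
\item $\ell^1\to\ell^1$ with constant $1$ (up to a constant).
\item The key $\ell^2\to\ell^2$ bound $\|\mathcal{M}_{\mathbb{H}_1}F\|_{\ell^2(\mathbb{P}^1)}\lesssim q^{1/2}\|F\|_{\ell^2}$, which is the value of the formula at $(1/2,1/2)$, where it equals $\max\{1/2,1/2,1/2\}$.
\end{itemize}

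For the $\ell^1\to\ell^1$ bound, fix $[\mathbf v]$ and let $L$ be the maximizing line. Its projection $\pi(L)$ is an affine line in $\mathbb{F}_q^2$ of direction $[\mathbf v]$. Hence $\mathcal{M}F([\mathbf v])\le\sum_{(x,y)\in\pi(L)}G(x,y)$ with $G(x,y)=\max_t|F(x,y,t)|\le\sum_t|F(x,y,t)|$. Two distinct directions' lines meet in at most one point, so summing over directions gives at most $\sum_{\mathbf z}G(\mathbf z)\cdot\#\{\text{directions}\}$. That is too crude. A better route for $(1,1)$ is the bound $\|\mathcal{M}F\|_{\ell^1}\le(q+1)\|\mathcal{M}F\|_\infty\le(q+1)\|F\|_1$, which gives exponent $1=1/v$. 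This is the correct value, so it is consistent.

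For the $\ell^2$ bound I would use the domination $\mathcal{M}_{\mathbb{H}_1}F\le M_2G_{F,u}$ mentioned above with $u=\infty$. Then $\|G_{F,\infty}\|_{\ell^2(\mathbb{F}_q^2)}\le\|F\|_{\ell^2}$, and I would invoke the sharp planar finite-field Kakeya estimate $\|M_2G\|_{\ell^2(\mathbb{P}^1)}\lesssim q^{1/2}\|G\|_{\ell^2}$. If a self-contained proof is wanted, I would prove the planar bound by the $TT^*$/Córdoba argument. Linearize, choosing a line $\ell_{[\mathbf v]}$ for each direction. Expand $\sum_{[\mathbf v]}\bigl(\sum_{\ell_{[\mathbf v]}}G\bigr)^2$ as $\sum_{[\mathbf v],[\mathbf v']}\sum_{\mathbf z\in\ell_{[\mathbf v]},\mathbf z'\in\ell_{[\mathbf v']}}G(\mathbf z)G(\mathbf z')$. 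Control this via the intersection bound $|\ell\cap\ell'|\le1$ for distinct directions and Cauchy--Schwarz, which gives $\lesssim q\|G\|_2^2$. This step is the main obstacle. The $q^{1/2}$ must be sharp, with no logarithmic loss; Córdoba's argument in the finite setting indeed gives it cleanly because incidences are exact.

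The remaining vertex, giving the third term $1+1/v-2/u$, comes from interpolating $(1/2,1/2)$ with $(0,0)$ and $(1,1)$. I would check that the region where each term dominates is a convex polygon whose vertices are among $(0,0)$ (value $1$), $(1,0)$, $(1,1)$ and $(1/2,1/2)$. The upper bound at every $(u,v)$ then follows by Riesz--Thorin. The operator is sublinear, so I would apply Riesz--Thorin to linearized versions. I would also use the monotonicity $\|\cdot\|_{\ell^v}\le\|\cdot\|_{\ell^{v'}}$ for $v\ge v'$ on the finite set to fill the remaining regions, paying the factor $q^{1/v-1/v'}$ in the other direction where needed.

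\textbf{Lower bounds.} Each term gets its own example.
\begin{itemize}
\item For $1/v$, take $F=\delta_{\mathbf 0}$. Every direction has a horizontal line through the origin, so $\mathcal{M}F\equiv1$, giving $\|\mathcal{M}F\|_v\sim q^{1/v}$ while $\|F\|_u=1$.
\item For $1-1/u$, take $F=1_L$ for a single horizontal line $L$. Then $\mathcal{M}F([\mathbf v_L])=q$, while $\|F\|_u=q^{1/u}$.
\item For $1+1/v-2/u$, take $F=1_{\{t=0\}}$, the horizontal plane. It contains, in each direction, the line through the origin, so $\mathcal{M}F\equiv q$. Hence $\|\mathcal{M}F\|_v\sim q^{1+1/v}$ and $\|F\|_u=q^{2/u}$.
\end{itemize}
Together these match the formula.
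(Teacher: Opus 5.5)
Your overall route is the paper's. It uses domination by the planar operator (your $G_{F,\infty}$ in place of $G_{F,u}$ is harmless, since $\|G_{F,\infty}\|_{\ell^u}\le\|F\|_{\ell^u}$ for every $u$), the $TT^*$ planar $\ell^2$ bound, the trivial endpoints, Riesz--Thorin on linearizations plus $\ell^p$ embeddings on $\mathbb{P}^1(\mathbb{F}_q)$, and the same three test functions. Your plane $\{t=0\}$ is exactly the paper's fan $S=\bigcup_{[\mathbf v]}L_{\mathbf 0,[\mathbf v]}$, because $L_{\mathbf 0,[a:b]}=\{(sa,sb,0):s\in\mathbb{F}_q\}$. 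However, two steps of the upper bound are misstated. As written, the first would not give the sharp exponent and the second's verification would fail, though both are repaired by what the paper does.

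The first is the key $\ell^2$ estimate. The expansion you write is not an identity. The double sum $\sum_{[\mathbf v],[\mathbf v']}\sum_{\mathbf z\in\ell_{[\mathbf v]},\,\mathbf z'\in\ell_{[\mathbf v']}}G(\mathbf z)G(\mathbf z')$ equals $\bigl(\sum_{[\mathbf v]}\sum_{\mathbf z\in\ell_{[\mathbf v]}}G(\mathbf z)\bigr)^2=\|TG\|_{\ell^1}^2$, not $\|TG\|_{\ell^2}^2$. The correct primal expansion is $\sum_{[\mathbf v]}\sum_{\mathbf z,\mathbf z'\in\ell_{[\mathbf v]}}G(\mathbf z)G(\mathbf z')$. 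No intersection bound is available there, and Cauchy--Schwarz only gives $\|TG\|_{\ell^2}^2\le q(q+1)\|G\|_{\ell^2}^2$, i.e.\ exponent $1$ instead of $\frac12$. The one-point intersection property must be used on the dual side:
\[
\|T^*g\|_{\ell^2(\mathbb{F}_q^2)}^2=\sum_{[\mathbf v],[\mathbf v']}g([\mathbf v])\overline{g([\mathbf v'])}\,|\ell_{[\mathbf v]}\cap\ell_{[\mathbf v']}|=(q-1)\|g\|_{\ell^2}^2+\bigl|\textstyle\sum_{[\mathbf v]}g([\mathbf v])\bigr|^2\le 2q\|g\|_{\ell^2}^2 .
\]
This is the paper's computation $TT^*=(q-1)I+J$ in Lemma~\ref{lem:planar-l2}, and then $\|T\|=\|T^*\|\le\sqrt{2q}$.

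The second is the interpolation geometry. In the coordinates $(\frac1u,\frac1v)$, the points $(0,0)$, $(\frac12,\frac12)$, $(1,1)$ are collinear. Interpolating among them therefore yields only the diagonal exponent ($\frac1u$ for $u\le 2$, $1-\frac1u$ for $u\ge 2$), and the convex hull of your four vertices is only the half $v\ge u$ of the square. The regions where the first and third terms dominate also have vertices at $(\frac1u,\frac1v)=(0,1)$ and $(\frac12,1)$. The third term $1+\frac1v-\frac2u$ is strictly largest exactly when $u>2$ and $v<u$, a region your interpolation never reaches, so the check you propose would fail.

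In the region $v<u$ the bound comes from the embedding $\|g\|_{\ell^v(\mathbb{P}^1(\mathbb{F}_q))}\le(q+1)^{\frac1v-\frac1u}\|g\|_{\ell^u(\mathbb{P}^1(\mathbb{F}_q))}$ applied to the diagonal estimate. This gives $q^{1+\frac1v-\frac2u}$ for $u\ge2$ and $q^{\frac1v}$ for $u\le 2$ (Lemmas~\ref{thm:upper-left} and~\ref{thm:green}). You mention this embedding only as a fallback, but it is the actual source of the third term.

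On the side $v\ge u$ your interpolation is fine, provided the $\ell^1\to\ell^1$ constant is taken to be $q+1$ rather than $O(1)$ as your bullet first says. Your first direct count already gives $(q+1)\|F\|_{\ell^1}$, so it was not too crude.
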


{  The upper bounds in Theorem~\ref{thm:exact_A} may also be
deduced from the finite field Kakeya maximal theorem of
Ellenberg, Oberlin, and Tao~\cite{EOT}, together with the
elementary endpoint estimates, interpolation, and the
$\ell^p$ embeddings used below. We nevertheless include a
self-contained proof. Its principal input is the
$TT^*$ estimate
\[
\|\mathcal M_2 G\|_{\ell^2(\mathbb P^1(\mathbb F_q))}
\le \sqrt{2q}\,\|G\|_{\ell^2(\mathbb F_q^2)}.
\]

The reason for retaining this argument is methodological.
The same planar estimate reappears exactly in the
zero-central-frequency contribution to the refined-direction
operator in the proof of Theorem~\ref{thm:sr-l2-bound}.
Consequently, the rank-one projective-direction and
refined-direction estimates can both be developed without
polynomial vanishing. In higher rank, by contrast, we invoke
the theorem of Ellenberg, Oberlin, and Tao for the corresponding
diagonal affine Kakeya estimate on $\mathbb F_q^{2n}$.}

\begin{theorem} \label{thm:exact_Hn}
For all $1\le u,v\le \infty$, one has
\begin{equation}\label{eq:An}
A_n(u,v)
=
\max\Bigl\{\frac{2n-1}{v},\ 1-\frac1u,\ 1+\frac{2n-1}{v}-\frac{2n}{u}\Bigr\}.
\end{equation}
\end{theorem}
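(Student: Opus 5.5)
Both inequalities will be proved: the upper bound $A_n(u,v)\le \max\{\cdot\}$ through reduction to the affine Kakeya maximal operator on $\mathbb F_q^{2n}$, and the lower bound by three test functions, one per term.

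\textbf{Upper bound.} First use the pointwise domination noted above. Project $\pi(\mathbf x,\mathbf y,t)=(\mathbf x,\mathbf y)$. Since $\pi$ maps each $L_{\mathbf p,[\mathbf v]}$ bijectively onto an affine line of direction $[\mathbf v]$, we get
\[
\mathcal M_{\mathbb H_n}F([\mathbf v])\le M_{2n}G([\mathbf v]),\qquad G:=G_{F,\infty}.
\]
Three estimates then suffice, with interpolation filling in the rest.
\begin{enumerate}
\item[(i)] The $(u,v)=(1,\infty)$ endpoint: $\mathcal M_{\mathbb H_n}F\le\|F\|_{\ell^1}$, which gives exponent $0$.
\item[(ii)] The $(\infty,\infty)$ endpoint: $\mathcal M_{\mathbb H_n}F\le q\|F\|_\infty$, which gives exponent $1$.
\item[(iii)] The diagonal estimate at $u=v=2n$, from Ellenberg--Oberlin--Tao:
\[
\|M_{2n}G\|_{\ell^{2n}(\mathbb P^{2n-1})}\lesssim q^{1-\frac1{2n}}\|G\|_{\ell^{2n}(\mathbb F_q^{2n})}.
\]
Here one must control $\|G_{F,\infty}\|_{\ell^{2n}(\mathbb F_q^{2n})}\le \|F\|_{\ell^{2n}(\mathbb H_n)}$. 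This holds because $\|\cdot\|_{\ell^\infty(\mathbb F_q)}\le\|\cdot\|_{\ell^{2n}(\mathbb F_q)}$ on each fibre, so there is no loss.
\end{enumerate}
Estimate (iii) gives the exponent $1-\frac1{2n}$ at $(\frac1u,\frac1v)=(\frac1{2n},\frac1{2n})$. I would also add the trivial bound at $(1,1)$: $\|\mathcal M F\|_{\ell^1}\le |\mathbb P^{2n-1}|\,\|F\|_1\sim q^{2n-1}\|F\|_1$, exponent $2n-1$. Because the maximal operator is sublinear, Riesz--Thorin/Marcinkiewicz-type interpolation for sublinear operators applies (linearise by choosing the maximising line measurably per direction). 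Combining it with the monotonicity of $\ell^p$ norms on $\mathbb P^{2n-1}$, namely $\|g\|_{\ell^v}\le q^{(2n-1)(\frac1v-\frac1{v'})}\|g\|_{\ell^{v'}}$ for $v\le v'$, and the analogous inequality on $\mathbb H_n$, extends the bound to all $(u,v)$. I expect the main bookkeeping obstacle to be checking that the convex hull of these points, together with the embedding losses, reproduces exactly the piecewise-linear function $\max\{\frac{2n-1}{v},1-\frac1u,1+\frac{2n-1}v-\frac{2n}u\}$. The three planes meet at $(\frac1{2n},\frac1{2n})$. The first plane passes through $(1,1)$ and $(1,0)$ (via the $\ell^\infty\to$ embedding at $u=1$). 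The second passes through $(1,0)$ and $(0,\cdot)$. The third contains the line through $(0,0)$ (value $1$) and $(\frac1{2n},\frac1{2n})$. In each region the bound comes from interpolating between the appropriate pair of vertices and then applying embedding in $v$.

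\textbf{Lower bound.} Each term needs its own example.
\begin{enumerate}
\item[(a)] $F=\delta_{\mathbf 0}$. Every direction has a horizontal line through the origin, so $\mathcal M F\equiv1$. Then $\|\mathcal MF\|_{v}\sim q^{(2n-1)/v}$ while $\|F\|_u=1$, which forces $\frac{2n-1}{v}$.
\item[(b)] $F=\mathbf 1_L$ for a single horizontal line $L$. Then $\mathcal MF([\mathbf v_0])=q$, so $\|\mathcal MF\|_v\ge q$ while $\|F\|_u=q^{1/u}$, which gives $1-\frac1u$.
\item[(c)] $F=\mathbf 1_{\mathbb H_n}$. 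Then $\mathcal MF\equiv q$, so $\|\mathcal MF\|_v= q\cdot q^{(2n-1)/v}$, while $\|F\|_u=q^{(2n+1)/u}$. This gives $1+\frac{2n-1}v-\frac{2n+1}u$, which is weaker than the claimed third term. A better choice is a full-direction Kakeya set of size $\sim q^{2n}$. The horizontal hyperplane $\{t=0\}$ does not work, since a line $L_{\mathbf p,[\mathbf v]}$ lies in it only when $t_0=0$ and $\mathbf x_0\cdot\mathbf b=\mathbf y_0\cdot\mathbf a$, which is possible. In fact, lines through the origin in every direction already lie in $\{t=0\}$. So $F=\mathbf 1_{\{t=0\}}$, with $|{\rm supp}|=q^{2n}$, has $\mathcal MF\equiv q$, and we obtain $1+\frac{2n-1}v-\frac{2n}u$.
\end{enumerate}
Together, (a)--(c) show that no smaller exponent is possible, completing the proof.
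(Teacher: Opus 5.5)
Your proposal is correct and follows essentially the paper's route: projection to $\mathbb F_q^{2n}$, the Ellenberg--Oberlin--Tao $\ell^{2n}$ bound, the trivial endpoints combined with interpolation and $\ell^p$ embeddings in $v$, and the point mass, a single line, and the plane $\{t=0\}$ as test functions; dominating by $G_{F,\infty}$ instead of $G_{F,u}$ and interpolating directly on the Heisenberg side are only cosmetic differences. In (c), the plane $\{t=0\}$ is exactly the paper's fan $S=\bigcup_{[\mathbf v]}L_{\mathbf 0,[\mathbf v]}$, so the stray claim that it ``does not work'' should simply be deleted.
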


Theorems~\ref{thm:exact_A} and~\ref{thm:exact_Hn} capture what can be extracted from the projective direction parameter $[\mathbf{v}]$ alone. To access the genuinely Heisenberg content of the problem, we now turn to the refined-direction operator.

Since \(\mathbb H_1(\mathbb F_q)\) may be identified with \(\mathbb F_q^3\), the results of Ellenberg, Oberlin, and Tao
\cite[Theorem~1.3 and Remark~1.4]{EOT} imply the bounds
\[
\|\mathcal{M}^{\mathrm{rd}}_{\mathbb H_1}F\|_{\ell^3(\mathcal D_1)}
\lesssim q^{\frac{2}{3}}\,\|F\|_{\ell^3(\mathbb H_1(\mathbb F_q))},
\qquad
\|\mathcal{M}^{\mathrm{rd}}_{\mathbb H_1}F\|_{\ell^2(\mathcal D_1)}
\lesssim q\,\|F\|_{\ell^2(\mathbb H_1(\mathbb F_q))}.
\]
The \(\ell^3\to\ell^3\) estimate is optimal (see Example~\ref{ex:rd-l3-sharp}). By contrast, the \(\ell^2\to\ell^2\) bound is not sharp. In the main theorem, we obtain the optimal exponent. The proof is purely Fourier-analytic and does not use polynomial vanishing.

\begin{theorem}\label{thm:sr-l2-bound}
There exists an absolute constant \(C\) such that for 
all \(F:\mathbb H_1(\mathbb F_q)\to\mathbb C\),
\[
\|\mathcal{M}^{\mathrm{rd}}_{\mathbb H_1}F\|_{\ell^2(\mathcal D_1)}
\le
C\,q^{\frac{1}{2}}\,\|F\|_{\ell^2(\mathbb H_1(\mathbb F_q))}.
\]
Moreover, the exponent \(\frac{1}{2}\) is sharp.
\end{theorem}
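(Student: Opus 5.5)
The idea is to reduce to non-negative \(F\) (replace \(F\) by \(|F|\)), write \(\mathcal M^{\mathrm{rd}}_{\mathbb H_1}\) explicitly in coordinates, and split \(F\) into its central Fourier modes in \(t\). The zero mode will be handled by the planar \(TT^*\) estimate \(\|\mathcal M_2 G\|_{\ell^2}\le\sqrt{2q}\|G\|_{\ell^2}\) from the proof of Theorem~\ref{thm:exact_A}. The nonzero modes will be handled by Plancherel in the basepoint variable together with an orthogonality argument.

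\emph{Geometric parametrisation.} Fix \(\omega=[a:b:c]\in\mathcal D_1\) with a normalised representative \((a,b)\). If \(L\) is a horizontal line with \(\mathrm{Dir}(L)=\omega\), then every point of \(L\) satisfies \(xb-ya=c\). Hence the projection of \(L\) is the planar line
\[
\ell_\omega=\{(x,y):xb-ya=c\},
\]
which is determined by \(\omega\). The map \(\omega\mapsto\ell_\omega\) is a bijection between \(\mathcal D_1\) (of size \(q^2+q\)) and the set of affine lines in \(\mathbb F_q^2\). Fix a base point \(p_\omega(0)\) and parametrise \(\ell_\omega\) by \(p_\omega(s)=p_\omega(0)+s(a,b)\). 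By \eqref{eq:line-param-main}, the horizontal lines with refined direction \(\omega\) are then exactly
\[
\{(p_\omega(s),\,t_0+cs):s\in\mathbb F_q\},\qquad t_0\in\mathbb F_q.
\]
So \(\mathcal M^{\mathrm{rd}}F(\omega)=\max_{t_0}\sum_s F(p_\omega(s),t_0+cs)\).

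\emph{Fourier splitting.} Write \(F(p,t)=\sum_{\lambda}\widehat F_\lambda(p)e(\lambda t)\), so that \(\|F\|_2^2=q\sum_\lambda\|\widehat F_\lambda\|_2^2\). Then
\[
\sum_s F(p_\omega(s),t_0+cs)=\sum_\lambda e(\lambda t_0)H_\lambda(\omega),\qquad H_\lambda(\omega):=\sum_s\widehat F_\lambda(p_\omega(s))\,e(\lambda cs).
\]
The \(\lambda=0\) term does not depend on \(t_0\). It equals the planar line sum of \(\widehat F_0=q^{-1}\sum_tF\) over \(\ell_\omega\), so the \(TT^*\) estimate gives a contribution \(\lesssim q\|\widehat F_0\|_2^2\le\|F\|_2^2\).

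For the remaining terms, bound the maximum over \(t_0\) by the \(\ell^2\) sum over \(t_0\) and apply Plancherel in \(t_0\):
\[
\max_{t_0}\Bigl|\sum_{\lambda\ne0}e(\lambda t_0)H_\lambda(\omega)\Bigr|^2\le q\sum_{\lambda\ne0}|H_\lambda(\omega)|^2 .
\]
It therefore suffices to prove, for each \(\lambda\neq0\),
\[
\sum_{\omega\in\mathcal D_1}|H_\lambda(\omega)|^2\lesssim q\,\|\widehat F_\lambda\|_2^2 .
\]
Summing over \(\lambda\) then gives \(q\cdot q\sum_\lambda\|\widehat F_\lambda\|_2^2=q\|F\|_2^2\), which is the claimed bound.

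\emph{The key step: nonzero central frequencies.} Expand the square as a sum over pairs of planar points \(p,p'\):
\[
\sum_{\omega}|H_\lambda(\omega)|^2=\sum_{p,p'}\widehat F_\lambda(p)\overline{\widehat F_\lambda(p')}\sum_{\omega:\,p,p'\in\ell_\omega}e\bigl(\lambda c_\omega(s-s')\bigr).
\]
\begin{itemize}
\item On the diagonal \(p=p'\), each point lies on \(q+1\) planar lines. This contributes \((q+1)\|\widehat F_\lambda\|_2^2\).
\item Off the diagonal, \(p\ne p'\) determine a unique line. For that line, \(c(s-s')\) should equal the symplectic form \(\sigma(p,p')=xy'-yx'\), up to a sign or normalisation that I still need to check.
\end{itemize}
The off-diagonal sum is then the full bilinear form with kernel \(e(\lambda\sigma(p,p'))\), minus its diagonal. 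Since \(\lambda\ne0\), this kernel is a rescaled discrete Fourier transform on \(\mathbb F_q^2\), so its operator norm is exactly \(q\). The removed diagonal has entries of modulus \(1\). Altogether the bound is \((2q+2)\|\widehat F_\lambda\|_2^2\).

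I expect the main obstacle to be verifying carefully that the phase \(c(s-s')\) depends only on the pair \((p,p')\) through \(\sigma\). This amounts to a bounded-fiber property of the quadratic map \((p,p')\mapsto\sigma(p,p')\) once the line through them is fixed, and the argument relies on it. If the identity fails, I would instead compute \(\sum_{c}\) directly for each spatial direction and use orthogonality of characters in \(c\).

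\emph{Sharpness.} Take \(F=\delta_{\mathbf p}\) for a single point \(\mathbf p\), so \(\|F\|_{\ell^2}=1\). Exactly \(q+1\) horizontal lines pass through \(\mathbf p\), one for each spatial direction, and they have distinct refined directions. Hence \(\mathcal M^{\mathrm{rd}}F=1\) on \(q+1\) elements of \(\mathcal D_1\), and \(\|\mathcal M^{\mathrm{rd}}F\|_{\ell^2}=\sqrt{q+1}\). So no exponent smaller than \(\frac12\) is possible.
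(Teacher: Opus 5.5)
The proposal is correct. The identity you flagged holds exactly, and it gives a different proof of the nonzero-frequency step from the paper's. One intermediate bound in the zero mode is off by a factor of \(q\), but the corrected bound still suffices.

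\textbf{The phase identity.} Let \(p=(x,y)=p_\omega(s)\) and \(p'=(x',y')=p_\omega(s')\) lie on \(\ell_\omega\). Then \((s-s')a=x-x'\), \((s-s')b=y-y'\) and \(xb-ya=c\), so
\[
c(s-s')=x(y-y')-y(x-x')=-(xy'-yx')=-\sigma(p,p').
\]
This is independent of the base point and of the scaling of \((a,b,c)\). Consequently, with \(g=\widehat F_\lambda\),
\[
\sum_{\omega}|H_\lambda(\omega)|^2=q\|g\|_2^2+\sum_{p,p'}g(p)\overline{g(p')}\,e\bigl(-\lambda\sigma(p,p')\bigr).
\]
Since \(\sigma(p,p')=(-y,x)\cdot p'\) and \(p\mapsto\lambda(-y,x)\) is invertible for \(\lambda\neq0\), the operator with this kernel is \(q\) times a unitary. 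Hence \(\sum_\omega|H_\lambda(\omega)|^2\le 2q\|g\|_2^2\). No bounded-fiber property enters your route; the exact identity is all you need, so the fallback plan is unnecessary.

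\textbf{The zero-mode error.} The planar \(TT^*\) estimate controls one line per direction. However, \(\sum_{\omega\in\mathcal D_1}|H_0(\omega)|^2\) runs over all \(q^2+q\) affine lines of \(\mathbb F_q^2\). So your claim that the zero mode contributes \(\lesssim q\|\widehat F_0\|_2^2\le\|F\|_2^2\) is false. For \(F\equiv 1\), the left side is \(q^2(q^2+q)\), while \(\|F\|_2^2=q^3\).

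There are two easy fixes:
\begin{itemize}
\item Group the \(q\) parallel lines in each direction and bound each by \(\mathcal M_2(|\widehat F_0|)\), as the paper does. This gives \(\sum_\omega|H_0(\omega)|^2\le q\cdot 2q\|\widehat F_0\|_2^2\le 2q\|F\|_2^2\).
\item Apply your own pair expansion at \(\lambda=0\). This gives \(q\|\widehat F_0\|_2^2+\bigl|\sum_p\widehat F_0(p)\bigr|^2\le (q^2+q)\|\widehat F_0\|_2^2\le (q+1)\|F\|_2^2\).
\end{itemize}
Either way the zero mode is of the required size \(q\|F\|_2^2\), so your conclusion stands.

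\textbf{Comparison with the paper.} The following parts coincide with the paper:
\begin{itemize}
\item the reduction to \(F\ge 0\) and the central Fourier splitting;
\item the zero mode handled via the planar estimate;
\item the loss of one factor \(q\) when passing to an \(\ell^2\) sum (your Plancherel-in-\(t_0\) step is equivalent to the paper's triangle inequality plus Cauchy--Schwarz over \(\xi\neq0\));
\item the point-mass sharpness example.
\end{itemize}

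The difference is in the nonzero-frequency estimate. The paper works in the charts \([1:m:\gamma]\) and \([0:1:\gamma]\). It Fourier transforms in \(\gamma\), sums over \(m\) by orthogonality, and uses that \(x\mapsto(\xi x-\rho)x\) has fibers of size at most \(2\). It handles the \(q\) directions \([0:1:\gamma]\) separately by Cauchy--Schwarz.

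Your argument is a chart-free \(TT^*\) computation. The incidence structure of planar lines, combined with the symplectic identity above, shows that the nonzero-frequency part is the planar incidence form twisted by a symplectic Fourier kernel. That kernel has norm \(q\), versus \(q^2\) for the constant kernel at \(\lambda=0\); this explains why the nonzero modes are harmless.

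Your route is shorter, treats all refined directions uniformly, and gives a slightly better constant (\(2q\) instead of the paper's \(3q\) for the nonzero modes). The paper's route instead reduces the estimate to an elementary counting property of an explicit quadratic map.
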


The proof begins by linearizing the maximal operator and decomposing in the central Fourier variable.
The zero-frequency term reduces, after projection to $\mathbb{F}_q^2$, to the planar Kakeya maximal estimate.
The nonzero frequencies are controlled by Plancherel and character orthogonality, which reduce the key $\ell^2$ bound
to a counting problem for an explicit quadratic map.

For $1\le u,v\le \infty$, let $A^{\mathrm{rd}}_1(u,v)$ be the smallest non-negative real number, independent of the field size, such that there exists
$C_{u,v}>0$ with
\begin{equation}\label{eq:Ard-def-proof}
\|\mathcal{M}^{\mathrm{rd}}_{\mathbb H_1}F\|_{\ell^v(\mathcal D_1)}
\le
C_{u,v}\,q^{A^{\mathrm{rd}}_1(u,v)}\,
\|F\|_{\ell^u(\mathbb H_1(\mathbb F_q))}
\qquad\text{for all }F:\mathbb H_1(\mathbb F_q)\to\mathbb C.
\end{equation}

Combining the sharp $\ell^2$ estimate with endpoint bounds and interpolation, we determine the exact mixed-norm exponent formula.

\begin{theorem}\label{thm:exact-rd}
For all $1\le u,v\le\infty$, one has
\begin{equation}\label{eq:Ard-formula}
A^{\mathrm{rd}}_1(u,v)
=
\max\Bigg\{\frac1v,\ 1-\frac1u,\ \frac{2}{v}-\frac1u,\ 1+\frac{2}{v}-\frac{3}{u}\Bigg\}.
\end{equation}
\end{theorem}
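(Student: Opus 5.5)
The proof splits into an upper bound and a matching lower bound. For the upper bound we work in the $(1/u,1/v)$ square and use Riesz--Thorin, or simply H\"older/convexity, since the maximal operator is sublinear and positive and we may linearize by choosing a line $L_\omega$ for each $\omega$. The exponent $A^{\mathrm{rd}}_1$ is a maximum of affine functions of $(1/u,1/v)$. It therefore suffices to establish the bound at the vertices where the maximum changes, and then to interpolate and use the embeddings $\ell^{u_1}\subset\ell^{u_2}$ for $u_1\le u_2$ (which cost nothing on the input side) together with H\"older on $\mathcal D_1$, which has $|\mathcal D_1|\sim q^2$ elements: $\|G\|_{\ell^{v}}\le q^{2(1/v-1/v')}\|G\|_{\ell^{v'}}$ for $v\le v'$.

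The endpoint estimates I would use are these.
\begin{enumerate}
\item $(u,v)=(\infty,\infty)$ with exponent $1$, since a line has $q$ points.
\item $(1,\infty)$ with exponent $0$, since each line sum is at most $\|F\|_1$.
\item $(1,1)$ with exponent $1$: each point lies on at most $q+1$ horizontal lines, one per spatial direction, and each such line has exactly one refined direction. This gives $\sum_\omega \sum_{p\in L_\omega}|F(p)|\le (q+1)\|F\|_1$.
\item $(2,2)$ with exponent $1/2$, from Theorem~\ref{thm:sr-l2-bound}.
\item $(\infty,1)$ with exponent $3$, which follows from the $(\infty,\infty)$ bound and H\"older.
\end{enumerate}
Interpolating between $(1,\infty)$ with exponent $0$, $(2,2)$ with exponent $1/2$ and $(\infty,\infty)$ with exponent $1$ gives the region governed by $\max\{1-1/u,\,1+2/v-3/u\}$; for example, on the diagonal the value is $1-1/u$ for $u\ge2$. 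Interpolating between $(1,1)$ with exponent $1$ and $(2,2)$ gives $1/v$ on the segment $u=v\le 2$. The H\"older step in $v$ then yields the $2/v-1/u$ and $1/v$ regimes. I would check the resulting piecewise-linear upper envelope against the formula \eqref{eq:Ard-formula} region by region. The regions are separated by the lines where pairs of the four affine functions coincide, for instance $1/v=1-1/u$ and $2/v-1/u=1+2/v-3/u$, i.e.\ $u=2$. The verification is that each of the four affine pieces is attained by a convex combination of the endpoints listed above.

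For the lower bounds I would exhibit four test functions, one for each affine function.
\begin{itemize}
\item $F=\delta_{p}$, a single point. The point lies on $q+1$ lines of distinct refined directions, so $\|\mathcal M F\|_v\sim q^{1/v}$ while $\|F\|_u=1$. This gives $1/v$.
\item $F=1_{L}$, a single horizontal line. Here $\mathcal M F(\mathrm{Dir}\,L)=q$ and $\|F\|_u=q^{1/u}$, which gives $1-1/u$.
\item $F=1_{\mathbb H_1}$. Every value is $q$, so $\|\mathcal M F\|_v=q\cdot q^{2/v}$ against $\|F\|_u=q^{3/u}$. This gives $1+2/v-3/u$.
\item A horizontal plane $F=1_{\{t=0\}}$, or more precisely a set of size $q^2$ through which, for each spatial direction $[a:b]$, a line in every refined direction passes, as in the sharpness example for Theorem~\ref{thm:sr-l2-bound}. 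For instance, take the union over $c$ of lines through points $(x_0,y_0)$ with $x_0b-y_0a=c$; I would take the $t=0$ slice. Lines in this slice hit it in about one point generically, so I would instead use the sharpness example from the $\ell^2$ theorem: a set $E$ of size $q^2$ meeting about $q^2$ refined directions in about $q$ points each... that would give $1+2/v-2/u$, which is too large, so the genuine example must be a set of size $\sim q$ met in $\sim1$ point by a $q^2$ family.
\end{itemize}
The fourth example should in fact be a set $E$ with $|E|\sim q$ such that about $q^2$ refined directions have a line meeting $E$ in at least one point, with $\mathcal M 1_E\gtrsim 1$ there. This gives $q^{2/v}/q^{1/u}$, i.e.\ the exponent $2/v-1/u$. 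A vertical fiber $E=\{(0,0,t):t\in\mathbb F_q\}$ works: lines through $(0,0,t)$ have twist $0$, so that set gives only $q$ directions. Instead I would take $E$ to be a horizontal line $\ell$ through the origin in direction $(1,0)$. Lines through the point $(s,0,t)$ of $\ell$ in direction $(a,b)$ have $c=sb$, so as $s$ and $[a:b]$ vary the refined directions cover all $c$ for each $[a:b]$ with $b\ne0$, i.e.\ about $q^2$ directions.

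The main obstacle is the bookkeeping: confirming that the convex hull of the upper-bound endpoints really produces exactly the envelope in \eqref{eq:Ard-formula} in every region. In particular, the $2/v-1/u$ piece for small $v$ must be obtained from the $(2,2)$ estimate by H\"older, and this requires checking that no region needs an endpoint not on my list.
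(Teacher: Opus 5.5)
Your proposal is correct. Its upper-bound half is the paper's argument: the same trivial endpoints (with the count $N(\mathbf p)\le q+1$ giving $\ell^1\to\ell^1$), the $\ell^2$ bound of Theorem~\ref{thm:sr-l2-bound}, Riesz--Thorin applied to a linearization, and H\"older on $\mathcal D_1$. When you carry out the bookkeeping, correct the attributions in your sketch.
\begin{itemize}
\item The triangle spanned by $(1,\infty)$, $(2,2)$, $(\infty,\infty)$ yields only $1-\frac1u$, on the region $v\ge\max\{u,\frac{u}{u-1}\}$.
\item The piece $\frac1v$ on $u\le v\le\frac{u}{u-1}$ does not come from H\"older: passing to larger $v$ from the diagonal only gives $\frac1u$. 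It comes from interpolating the $\ell^1\to\ell^r$ bound $(q+1)^{1/r}$ (your $(1,1)$ and $(1,\infty)$ endpoints) with the $\ell^2$ bound, as in Lemma~\ref{lem:rd-upper-u-le-2}(b).
\item The pieces $\frac2v-\frac1u$ and $1+\frac2v-\frac3u$ come from H\"older on $\mathcal D_1$ applied to the diagonal bounds $q^{1/u}$ for $u\le 2$ and $q^{1-1/u}$ for $u\ge 2$.
\item Your $(\infty,1)$ endpoint is not needed. Interpolating with it on $\{u\le 2,\ v\le u\}$ would give only the weaker exponent $\frac3v-\frac2u$.
\end{itemize}

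The genuine difference from the paper is the test function for $\frac2v-\frac1u$. Both constructions rest on one observation: the refined direction of a horizontal line through $(x,y,t)$ depends only on $(x,y)$. Hence some line of refined direction $\omega$ meets $E$ if and only if $\ell_\omega$ meets $\pi(E)$.

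The paper lifts two non-parallel planar lines to $t=0$, so that \emph{every} $\ell_\omega$ is hit and $\mathcal M^{\mathrm{rd}}_{\mathbb H_1}\mathbf 1_E\ge 1$ on all of $\mathcal D_1$. Your single horizontal line $\{(s,0,0)\}$ misses only the $q-1$ planar lines parallel to its projection. It is therefore hit by $q^2+1$ refined directions, which suffices.

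Your choice is slightly more economical, since the same $\mathbf 1_L$ witnesses both $1-\frac1u$ and $\frac2v-\frac1u$. The paper's version buys the cleaner statement that a set of size $2q-1$ meets a line in every refined direction.

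In a write-up, delete the exploratory remarks in that bullet, for three reasons.
\begin{itemize}
\item A set of size $q^2$ met in $\sim q$ points by $\sim q^2$ refined directions cannot exist: the $\ell^2$ bound forces $|E|\gtrsim m^2|\Omega|/q\sim q^3$.
\item The sharpness example for Theorem~\ref{thm:sr-l2-bound} is the point mass, not a set of size $q^2$.
\item The vertical fibre does not work, since it only sees the $q+1$ refined directions $[a:b:0]$.
\end{itemize}
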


The upper bounds in Theorem~\ref{thm:exact-rd} are obtained by interpolating the sharp $\ell^2$ estimate of Theorem~\ref{thm:sr-l2-bound} with endpoint bounds ($\ell^1\to\ell^1$, $\ell^1\to\ell^\infty$, $\ell^\infty\to\ell^\infty$) and $\ell^p$ embeddings on~$\mathcal{D}_1$. For sharpness, each of the four terms in~\eqref{eq:Ard-formula} is forced by an explicit test function: a point mass, the indicator of a single horizontal line, the indicator of two non-parallel affine lines lifted to $t=0$, and the constant function $F\equiv 1$, respectively.

To extend Theorem~\ref{thm:exact-rd} to higher rank $n\ge 2$, a natural first step is to establish the analogue of Theorem~\ref{thm:sr-l2-bound}, namely, an estimate of the form
\[
\ell^2\bigl(\mathbb H_n(\F_q)\bigr)\longrightarrow \ell^{2n}(\mathcal D_n).
\]
A straightforward adaptation of the $n=1$ Fourier/Plancherel strategy yields a bound on the scale of $q^{\frac n2}$, rather than the smaller scale $q^{\frac{2n-1}{2n}}$ suggested by the point-mass obstruction. Closing this gap therefore appears to require genuinely new input. {  We leave the case $n\ge 2$ for future work.}

{  One motivation for the refined-direction framework is to
separate the part of the affine Kakeya problem that is already
controlled by our Fourier-analytic estimate from the genuinely
non-horizontal obstruction. Let
$E\subset\mathbb F_q^3$ be an affine Kakeya set, identified
with a subset of $\mathbb H_1(\mathbb F_q)$, and decompose
\[
\mathcal D_1=\Omega_1\sqcup\Omega_2,
\]
where $\Omega_1$ consists of the refined directions for which
$E$ contains a horizontal line and
$\Omega_2=\mathcal D_1\setminus\Omega_1$.

By Theorem~\ref{thm:Nm-refined},
\[
|E|\gtrsim q\,|\Omega_1|.
\]
Hence $|\Omega_1|\gtrsim q^2$ implies $|E|\gtrsim q^3$.
As a cardinality statement, this conclusion is also a
consequence of the classical finite field Kakeya maximal
theorem; in the full-direction case one may simply adjoin a
single vertical line and apply the affine Kakeya theorem.
The point here is instead that the $\Omega_1$ contribution is
controlled by the Fourier-analytic refined-direction estimate
of Theorem~\ref{thm:sr-l2-bound}, without using polynomial
vanishing. Thus, a fully Fourier-analytic approach to the
affine Kakeya problem may concentrate on the complementary
case $|\Omega_2|\gtrsim q^2$.}

We now discuss some applications, which are of independent interest. In the first consequence, we obtain quantitative lower
bounds for horizontal Heisenberg Kakeya sets, both for the full and restricted families of directions.

\begin{theorem}
\label{thm:Nm-refined}
{ Let $\varnothing \neq \Omega\subset \mathcal D_1$,} and let $E\subset \mathbb H_1(\mathbb F_q)$.
Let $m$ be an integer with $1\le m\le q$.
Assume that for every $\omega\in\Omega$, there exists a horizontal line
$L\subset \mathbb H_1(\mathbb F_q)$ such that
\[
\mathrm{Dir}(L)=\omega,
\qquad\text{and}\qquad
|E\cap L|\ge m.
\]
For $1\le u< \infty$ and $1\le v\le \infty$, we have
\begin{equation}\label{eq:Nm-uv-lower}
|E|
\ \gtrsim_{u, v} m^{u}\,|\Omega|^{\frac{u}{v}}\, q^{-u\,A^{\mathrm{rd}}_1(u,v)}.
\end{equation}

In particular, for $(u,v)=(2,2)$, one has 
\begin{equation}\label{eq:Nm-22}
|E|\ \gtrsim  \frac{m^2\,|\Omega|}{q}.
\end{equation}
Hence, any full-direction horizontal Heisenberg Kakeya set in $\mathbb{H}_1(\mathbb{F}_q)$ is of size at least $\gtrsim q^3$.
\end{theorem}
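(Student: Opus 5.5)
The plan is the standard maximal-operator-to-cardinality argument: apply the mixed-norm bound \eqref{eq:Ard-def-proof}, whose exponent Theorem~\ref{thm:exact-rd} identifies, to the test function \(F=\mathbf 1_E\).

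\textbf{Lower bound.} For each \(\omega\in\Omega\), the hypothesis supplies a horizontal line \(L\) with \(\mathrm{Dir}(L)=\omega\) and \(|E\cap L|\ge m\). By the definition \eqref{eq:Mrd},
\[
(\mathcal M^{\mathrm{rd}}_{\mathbb H_1}\mathbf 1_E)(\omega)\ge \sum_{\mathbf p\in L}\mathbf 1_E(\mathbf p)=|E\cap L|\ge m .
\]
Restricting the \(\ell^v(\mathcal D_1)\) norm to \(\Omega\) then gives
\[
\|\mathcal M^{\mathrm{rd}}_{\mathbb H_1}\mathbf 1_E\|_{\ell^v(\mathcal D_1)}\ge m|\Omega|^{1/v}.
\]
This also holds for \(v=\infty\), with \(|\Omega|^{0}=1\), since \(\Omega\neq\varnothing\).

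\textbf{Upper bound.} By \eqref{eq:Ard-def-proof},
\[
\|\mathcal M^{\mathrm{rd}}_{\mathbb H_1}\mathbf 1_E\|_{\ell^v(\mathcal D_1)}\le C_{u,v}\,q^{A^{\mathrm{rd}}_1(u,v)}|E|^{1/u},
\]
because \(\|\mathbf 1_E\|_{\ell^u}=|E|^{1/u}\) for \(u<\infty\). Combining the two bounds and raising to the power \(u\) yields \eqref{eq:Nm-uv-lower}, with implied constant \(C_{u,v}^{-u}\).

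One caveat: \(A^{\mathrm{rd}}_1\) is defined as a smallest exponent, so the estimate might hold only at exponents arbitrarily close to it, not at the infimum itself. For the specific case \((2,2)\) I therefore avoid relying on that. Theorem~\ref{thm:sr-l2-bound} gives the bound directly with exponent \(1/2\) and absolute constant \(C\):
\[
m|\Omega|^{1/2}\le C q^{1/2}|E|^{1/2},
\qquad\text{hence}\qquad
|E|\ge C^{-2}\,\frac{m^2|\Omega|}{q},
\]
which is \eqref{eq:Nm-22}. The general case uses the exponent exactly as attained by the interpolation argument that proves the upper bound in Theorem~\ref{thm:exact-rd}, where a finite constant appears at the stated exponent.

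\textbf{Kakeya corollary.} For a full-direction horizontal Heisenberg Kakeya set, take \(\Omega=\mathcal D_1\) and \(m=q\). The count is
\[
|\mathcal D_1|=|\mathbb P^2(\mathbb F_q)|-1=q^2+q,
\]
so \eqref{eq:Nm-22} gives \(|E|\gtrsim q^2(q^2+q)/q\gtrsim q^3\).

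I see no real obstacle here: all the substantive content lies in Theorems~\ref{thm:sr-l2-bound} and~\ref{thm:exact-rd}. The only point needing care is the attainment issue above.
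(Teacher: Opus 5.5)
Your proposal is correct and matches the paper's proof: test $F=\mathbf 1_E$, bound $\|\mathcal M^{\mathrm{rd}}_{\mathbb H_1}\mathbf 1_E\|_{\ell^v(\mathcal D_1)}$ below by $m|\Omega|^{1/v}$ and above by \eqref{eq:Ard-def-proof}, then rearrange. Your attainment caveat is a fair point that the paper leaves implicit; it is resolved as you say, because the upper-bound lemmas of Section~\ref{section9} give finite constants at exactly the exponent in \eqref{eq:Ard-formula}, and $A^{\mathrm{rd}}_1(2,2)=\tfrac12$ agrees with Theorem~\ref{thm:sr-l2-bound}.
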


{  The strength of Theorem~\ref{thm:Nm-refined} is its
restricted-direction and partial-incidence formulation:
$\Omega$ may be an arbitrary subset of refined directions,
and the selected line in each direction is required to contain
only $m$ points of $E$. Thus, the theorem gives a
finite field Furstenberg-type estimate rather than only a
full-direction Kakeya bound. In particular, the sharp
$\ell^2$ estimate of Theorem~\ref{thm:sr-l2-bound} yields
\[
|E|\gtrsim \frac{m^2|\Omega|}{q}.
\]
For $m=q$ and $\Omega=\mathcal D_1$, this recovers
$|E|\gtrsim q^3$ by a purely Fourier-analytic argument.
The latter full-direction cardinality bound is also implied
by the classical affine Kakeya theorem after adjoining a
vertical line; the additional content here is the uniform
estimate for arbitrary $\Omega$ and $m$.}

Let $E\subset \mathbb H_1(\mathbb F_q)$, and define
\[
M_E(\omega):=
(\mathcal M^{\mathrm{rd}}_{\mathbb H_1}\mathbf 1_E)(\omega)
=
\max_{\substack{L\ \mathrm{horizontal}\\ \mathrm{Dir}(L)=\omega}}
|E\cap L|,
\qquad \omega\in\mathcal D_1.
\]
The next theorem provides higher moment bounds for refined directions.
\begin{theorem}\label{co-1mar}
For every $2\le s<\infty$, we have
\[
\sum_{\omega\in\mathcal D_1} M_E(\omega)^s
\lesssim_{s} q\,|E|^{s-1}.
\]
\end{theorem}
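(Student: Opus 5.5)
The plan is to derive the moment bound from the sharp $\ell^2$ estimate of Theorem~\ref{thm:sr-l2-bound}, together with the trivial pointwise bound $M_E(\omega)\le \min\{q,|E|\}$. Apply Theorem~\ref{thm:sr-l2-bound} to $F=\mathbf 1_E$. Since $\|\mathbf 1_E\|_{\ell^2}^2=|E|$, this gives
\[
\sum_{\omega\in\mathcal D_1} M_E(\omega)^2\lesssim q\,|E|,
\]
which is exactly the case $s=2$.

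For $s>2$, I split off powers using the sup bound:
\[
\sum_\omega M_E(\omega)^s\le \|M_E\|_{\ell^\infty}^{s-2}\sum_\omega M_E(\omega)^2 .
\]
Since every line contains at most $|E|$ points of $E$, we have $\|M_E\|_{\ell^\infty}\le |E|$. Substituting, the right-hand side is at most $|E|^{s-2}\cdot q|E|=q|E|^{s-1}$, which is the claimed bound with implied constant $C^2$ from Theorem~\ref{thm:sr-l2-bound}. The dependence on $s$ is thus trivial; the ``$\lesssim_s$'' is harmless.

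If the authors intended a route through Theorem~\ref{thm:exact-rd}, one could instead use the $\ell^u\to\ell^s$ bound with $u=s$ and $A^{\mathrm{rd}}_1(s,s)=\max\{1/s,\,1-1/s,\,1/s,\,1-1/s\}=1-1/s$ for $s\ge2$. This gives $\sum M_E^s\lesssim q^{s-1}|E|$, a different estimate that is better or worse depending on $|E|$ versus $q$. The stated form $q|E|^{s-1}$, however, is exactly the output of the interpolation-free argument above, so that is the proof I will present.

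There is no real obstacle: the only substantive input is Theorem~\ref{thm:sr-l2-bound}, and the remaining step is the elementary inequality $M_E\le |E|$. The one thing to check is that the maximum over lines in the definition of $M_E$ is taken over a nonempty family for each $\omega\in\mathcal D_1$, which the paper noted after Definition~\ref{def:refined-direction}. If $E=\varnothing$, both sides vanish.
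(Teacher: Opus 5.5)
Your argument is correct, but it is not the route the paper takes.

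The paper deduces Theorem~\ref{co-1mar} from the full phase diagram, Theorem~\ref{thm:exact-rd}. It uses the dual pair $(u,v)=(\frac{s}{s-1},s)$, not $u=s$ as you guessed. There $A^{\mathrm{rd}}_1(\frac{s}{s-1},s)=\frac1s$ for $s\ge 2$, which gives $\|M_E\|_{\ell^s(\mathcal D_1)}\lesssim q^{\frac1s}|E|^{1-\frac1s}$. Raising this to the $s$-th power yields exactly $q|E|^{s-1}$. That exponent comes from Lemma~\ref{lem:rd-upper-u-le-2}(c), i.e.\ Riesz--Thorin interpolation between the $\ell^1\to\ell^\infty$ bound \eqref{eq:rd-endpoint-1infty} and the $\ell^2\to\ell^2$ bound of Theorem~\ref{thm:sr-l2-bound}.

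You use the same two inputs: your bound $M_E\le|E|$ is precisely \eqref{eq:rd-endpoint-1infty} applied to $\mathbf 1_E$. The difference is that you combine them through the pointwise inequality $M_E^s\le\|M_E\|_{\ell^\infty}^{s-2}M_E^2$. So you need neither interpolation nor Theorem~\ref{thm:exact-rd}, and you get an explicit constant $C^2$ that does not depend on $s$.

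What the paper's route buys is a strong-type estimate, $\|\mathcal M^{\mathrm{rd}}_{\mathbb H_1}F\|_{\ell^s(\mathcal D_1)}\lesssim q^{\frac1s}\|F\|_{\ell^{s/(s-1)}}$, valid for arbitrary $F$. Your trick applied to a general $F$ only gives the weaker bound $q\|F\|_{\ell^1}^{s-2}\|F\|_{\ell^2}^2$. By log-convexity of $\ell^p$ norms this dominates $q\|F\|_{\ell^{s/(s-1)}}^s$, and the two coincide for indicator functions. For the set statement at hand, the two arguments therefore give the same result.
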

We conclude the introduction with brief remarks on related work. In the affine setting, Theorem~\ref{thm:Nm-refined} is connected to the study of Furstenberg sets in \cite{F1, F2} and the references therein. Fourier-analytic properties of affine Kakeya sets have been investigated recently by Fraser \cite{Fraser25}, and sum-product type results in the Heisenberg setting can be found in \cite{HH12, HH13, HH18, HH19}. We also note that Kakeya problems have been studied over rings such as $(\mathbb{Z}/N\mathbb{Z})^n$, where directions are defined modulo units and multi-scale phenomena arise, see \cite{Ars21, Dha24, DD21, DH13, HW18}.

\noindent\textbf{Organization of the paper.}
Section~\ref{section2} collects the basic notation and geometric facts, and proves the projection and domination principles that reduce horizontal Kakeya estimates on $\mathbb H_1(\mathbb F_q)$ to planar Kakeya estimates on $\mathbb F_q^2$.
Sections~\ref{section3}--\ref{section6} establish the sharp mixed-norm bounds for the horizontal Kakeya maximal operator $\mathcal{M}_{\mathbb H_1}$ in the case $n=1$ (Theorem~\ref{thm:exact_A}), including the matching lower bounds.
Section~\ref{section7} outlines the corresponding argument in higher rank, yielding the extension to general $n$ (Theorem~\ref{thm:exact_Hn}).
Sections~\ref{section8}--\ref{section10} are devoted to the refined-direction maximal operator on $\mathbb H_1(\mathbb F_q)$: we prove the sharp $\ell^2$ estimate, determine the full mixed-norm phase diagram, and deduce the stated combinatorial consequences for Heisenberg Kakeya sets.
Finally, Section~\ref{section11} presents examples distinguishing the refined-direction horizontal Kakeya property from the classical affine Kakeya property, and proposes an approach toward a new Fourier-analytic proof of the affine Kakeya theorem in $\mathbb{F}_q^3$.

\section{Preliminaries}\label{section2}

\subsection{Notation and Basic facts for $\mathbb H_1(\Fq)$}

Let $q$ be an odd prime power and let $\Fq$ denote the finite field with $q$ elements. The group $\mathbb{H}_1(\mathbb{F}_q)$ is identified with $\Fq^3$ with the group law
\[
(x, y, t)\cdot(x', y', t')
=
\bigl(x+x',\,y+ y',\,
t+t'+(xy'-yx')\bigr).
\]
For any $\bfp=(x_0,y_0,t_0)\in \HH_1(\Fq)$ and $[\bfv]\in \PP^1(\Fq)$ with $\bfv=(a,b)\neq (0,0)$, the horizontal line  
\[
L_{\mathbf{p}, [\mathbf{v}]}=\{\,\mathbf{p} \cdot(sa,sb,0):\ s\in \mathbb{F}_q\,\}=
\bigl\{(x_0+sa,\ y_0+sb,\
t_0+ s(x_0 b-y_0a)):\ s\in \mathbb{F}_q\bigr\}
\]
has refined direction $\mathrm{Dir}(L)=[a:b:(x_0b-y_0a)]\in \cD_1$, where $\cD_1=\PP^2(\Fq)\setminus\{[0:0:1]\}$. Given $(a,b)\neq (0,0)$, the map $(x,y)\mapsto (xb-ya)$ is a nontrivial linear functional and hence is surjective onto $\Fq$. 

Let us denote by $\cL$ the set of all horizontal lines in $\HH_1(\Fq)$. For $[\bfv] \in \PP^1(\Fq)$, $\omega\in \cD_1$, or $\bfp\in \HH_1(\Fq)$, denote by $\cL([\bfv])$, $\cL(\omega)$ and $\cL(\bfp)$ the set of all lines in $\cL$ with direction $[\bfv]$, with refined direction $\omega$, or through point $\bfp$, respectively. Note that
\[
|\HH_1(\Fq)|=q^3,\quad |\PP^1(\Fq)|=q+1,\quad |\cD_1|=q^2+q,\quad |L_{\bfp,[\bfv]}|=q.
\]
It follows that 
\[
|\cL|=\frac{q^3\cdot (q+1)}{q}=q^2(q+1).
\]
Given $[\bfv]\in \mathbb P^1(\Fq)$, one has \[
|\cL([\bfv])|=\frac{q^2(q+1)}{q+1}=q^2. 
\]
Given $\omega\in \cD_1$, one has 
\[
|\cL(\omega)|=\frac{q^2(q+1)}{q^2+q}=q.
\]
Given $\bfp\in \HH_1(\Fq)$, one has
\[
|\cL(\bfp)| = |\mathbb{P}^1(\mathbb{F}_q)|=q+1.
\]
Moreover, there is a natural one-to-one correspondence between $\cL(\bfp)$ and $\PP^1(\Fq)$. So each line in $\cL(\bfp)$ has different directions and different refined directions. 

\subsection{Basic lemmas in analysis}

The first lemma collects Propositions 6.11 and 6.12 of \cite{Fol}.

\begin{lemma} \label{lem:embed}
Let $X$ be a finite set with $|X|=N$ and let $1\le r\le s\le\infty$. Then for every $g:X\to\mathbb{C}$, one has 
\[
\|g\|_{\ell^s(X)}\le \|g\|_{\ell^r(X)},
\]
and 
\[
\|g\|_{\ell^r(X)} \le N^{\frac1r-\frac1s}\,\|g\|_{\ell^s(X)}.
\]
\end{lemma}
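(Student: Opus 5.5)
The first inequality will follow by homogeneity from Hölder's inequality, and the second will be a direct application of Hölder to a product with the constant function $1$, treating the cases $s=\infty$ and $s<\infty$ separately.

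\textbf{First inequality.} If $g\equiv 0$ there is nothing to prove, so assume $g\neq 0$. By homogeneity we may normalize so that $\|g\|_{\ell^r(X)}=1$. Then $|g(x)|\le 1$ for every $x\in X$, which immediately gives $\|g\|_{\ell^\infty(X)}\le 1$. For $r\le s<\infty$, the bound $|g(x)|\le 1$ implies $|g(x)|^s\le |g(x)|^r$ pointwise. Summing over $X$ yields $\|g\|_{\ell^s}^s\le \|g\|_{\ell^r}^r=1$, which is the claim.

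\textbf{Second inequality, case $s=\infty$.} Here we bound each term trivially:
\[
\sum_{x\in X}|g(x)|^r\le N\|g\|_{\ell^\infty}^r .
\]
Taking $r$-th roots gives $\|g\|_{\ell^r}\le N^{1/r}\|g\|_{\ell^\infty}$, as required.

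\textbf{Second inequality, case $s<\infty$.} Write
\[
\sum_{x\in X}|g(x)|^r=\sum_{x\in X}|g(x)|^r\cdot 1 .
\]
Apply Hölder's inequality with exponents $p=s/r\ge 1$ and its conjugate $p'$, where $1/p'=1-r/s$. This gives
\[
\sum_{x\in X}|g(x)|^r\le \Bigl(\sum_{x\in X}|g(x)|^s\Bigr)^{r/s} N^{1-r/s}.
\]
Taking $r$-th roots yields $\|g\|_{\ell^r}\le N^{1/r-1/s}\|g\|_{\ell^s}$. When $r=s$ the statement is trivial.

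No step here is a real obstacle. The only care needed is to handle the endpoint cases $s=\infty$ and $r=s$ separately, so that Hölder's inequality is never applied with an undefined exponent.
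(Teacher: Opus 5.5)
Your proof is correct and is essentially the standard argument: normalize $\|g\|_{\ell^r(X)}=1$ for the nesting inequality, and apply Hölder with exponents $s/r$ and $s/(s-r)$ against the constant function $1$ for the reverse inequality. The paper does not prove this lemma itself but cites Folland's Propositions 6.11 and 6.12, whose proofs run along these same lines, and your handling of the endpoint cases $s=\infty$ and $r=s$ is adequate.
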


The second lemma is an interpolation, whose proof can be found in \cite{Grafakos04}.

\begin{lemma}\label{interpolation-inequality}
Let $X$ and $Y$ be finite sets, and let
\[
T : \ell^{p_0}(X) \cap \ell^{p_1}(X)\longrightarrow \ell^{r_0}(Y) \cap \ell^{r_1}(Y)
\]
be a linear operator.
Let $1 \le p_0, p_1, r_0, r_1 \le \infty$.
Assume that the following estimates hold for all functions $f : X \to \mathbb{C}$:
\[
\|Tf\|_{\ell^{r_0}(Y)} \le C_0 \, \|f\|_{\ell^{p_0}(X)},
\qquad
\|Tf\|_{\ell^{r_1}(Y)} \le C_1 \, \|f\|_{\ell^{p_1}(X)}.
\]
Then, for any $0 \le \theta \le 1$, we have
\[
\|Tf\|_{\ell^{r}(Y)} \le C_0^{1-\theta} C_1^{\theta} \, \|f\|_{\ell^{p}(X)},
\]
where the exponents $p$ and $r$ are defined by
\[
\frac{1}{p} = \frac{1-\theta}{p_0} + \frac{\theta}{p_1},
\qquad
\frac{1}{r} = \frac{1-\theta}{r_0} + \frac{\theta}{r_1}.
\]
\end{lemma}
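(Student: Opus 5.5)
The plan is to prove the lemma by the Riesz--Thorin complex interpolation method. Since $X$ and $Y$ are finite, every function is simple, so all sums below are finite and no density argument is needed. Fix $0<\theta<1$; the endpoint values $\theta\in\{0,1\}$ are exactly the hypotheses. Fix $f:X\to\mathbb C$ with $\|f\|_{\ell^p(X)}=1$; by homogeneity this is no loss. By duality on the finite set $Y$,
\[
\|Tf\|_{\ell^r(Y)}=\sup\Bigl\{\Bigl|\sum_{y\in Y}Tf(y)\,g(y)\Bigr| : \|g\|_{\ell^{r'}(Y)}=1\Bigr\},
\]
where $r'$ denotes the conjugate exponent. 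It therefore suffices to show $\bigl|\sum_y Tf(y)g(y)\bigr|\le C_0^{1-\theta}C_1^{\theta}$ for every such $g$.

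Write $f=|f|\,\mathrm{sgn} f$ and $g=|g|\,\mathrm{sgn} g$, with the convention $\mathrm{sgn}\,0=0$. For $z$ in the closed strip $S=\{0\le \mathrm{Re}\, z\le 1\}$, define
\[
f_z=|f|^{p\left(\frac{1-z}{p_0}+\frac{z}{p_1}\right)}\mathrm{sgn} f,\qquad
g_z=|g|^{r'\left(\frac{1-z}{r_0'}+\frac{z}{r_1'}\right)}\mathrm{sgn} g,
\]
and set $\Phi(z)=\sum_{y}Tf_z(y)\,g_z(y)$. By linearity of $T$,
\[
\Phi(z)=\sum_{x,y}f_z(x)\,g_z(y)\,(T\delta_x)(y),
\]
which is a finite linear combination of exponentials $e^{z\lambda}$ with real $\lambda$. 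Hence $\Phi$ is entire and bounded on $S$. At $z=\theta$, the relations $\frac1p=\frac{1-\theta}{p_0}+\frac{\theta}{p_1}$ and $\frac1{r'}=\frac{1-\theta}{r_0'}+\frac{\theta}{r_1'}$ give $f_\theta=f$ and $g_\theta=g$. So $\Phi(\theta)$ is precisely the pairing we want to bound.

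On the line $\mathrm{Re}\, z=0$ we have $|f_{it}|=|f|^{p/p_0}$, so $\|f_{it}\|_{\ell^{p_0}}^{p_0}=\|f\|_{\ell^p}^p=1$; likewise $\|g_{it}\|_{\ell^{r_0'}}=1$. Hölder and the first hypothesis then give $|\Phi(it)|\le C_0$. The same computation on $\mathrm{Re}\, z=1$ gives $|\Phi(1+it)|\le C_1$. Hadamard's three lines lemma then yields $|\Phi(\theta)|\le C_0^{1-\theta}C_1^{\theta}$, which is the claim.

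The main obstacle is bookkeeping at infinite exponents, where the formulas above contain $\infty$ in a denominator. If $p=\infty$, then $p_0=p_1=\infty$, and I simply take $f_z=f$. If exactly one of $p_0,p_1$ is infinite, the corresponding term $1/p_j$ vanishes and the formula still makes sense; the norm identity on that boundary line degenerates to $|f_{it}|\le 1$ in sup norm, because $\|f\|_{\ell^\infty}\le \|f\|_{\ell^p}=1$ on a finite set. The cases $r=1$ (so $r'=\infty$) and $r_j=1$ are treated symmetrically on the $g$ side. Zeros of $f$ or $g$ cause no trouble, since those coordinates contribute $0$ for every $z$.
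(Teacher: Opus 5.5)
Your proposal is correct: it is the standard Riesz--Thorin argument via Hadamard's three lines lemma, which is exactly the proof in Grafakos that the paper cites instead of giving its own. One small slip, which does not affect the conclusion: when, say, $p_0=\infty$, the bound $\|f_{it}\|_{\ell^\infty}\le 1$ holds because the exponent of $|f|$ has zero real part on the line $\mathrm{Re}\,z=0$ (so $|f_{it}|=\mathbf 1_{\{f\neq 0\}}$), not because $\|f\|_{\ell^\infty}\le\|f\|_{\ell^p}$.
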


\subsection{Planar domination lemma}

For $f:\mathbb{F}_q^2\to\mathbb{C}$, define
\begin{equation}\label{eq:Mpl}
\mathcal{M}_{2}f([\mathbf{v}])
:=
\max_{\ell\parallel [\mathbf{v}]}\ \sum_{\mathbf x\in\ell} |f(\mathbf x)|,
\qquad [\mathbf{v}]\in \mathbb{P}^{1}(\mathbb{F}_q),
\end{equation}
where the maximum is taken over all affine lines $\ell\subset \mathbb{F}_q^2$ with direction $[\mathbf{v}]$.

\begin{lemma}
\label{lem:planar-domination}
Let $1\le u\leq \infty$ and 
$F:\mathbb{H}_1(\mathbb{F}_q)\to\mathbb{C}$ be given. For $(x,y)\in \Fq^2$, define
\[
G_{F,u}(x,y):=\Bigl(\sum_{t\in\mathbb{F}_q}|F(x,y,t)|^u\Bigr)^{\frac{1}{u}}
\]
when $1\leq u<\infty$, and 
\[
G_{F,\infty}(x,y):=\max\limits_{t\in\mathbb{F}_q}|F(x,y,t)|
\]
when $u=\infty$. Then
\begin{equation}\label{eq:Gu-norm}
\|G_{F,u}\|_{\ell^u(\mathbb{F}_q^2)}= \|F\|_{\ell^u(\mathbb{H}_1(\mathbb{F}_q))},
\end{equation}
and
\begin{equation}\label{eq:MH-by-Mpl}
\mathcal{M}_{\mathbb{H}_1} F([\mathbf{v}]) \le \mathcal{M}_{2}G_{F,u}([\mathbf{v}])
\end{equation}
for all $[\mathbf{v}]\in\mathbb{P}^1(\mathbb{F}_q)$. 
\end{lemma}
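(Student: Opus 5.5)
The statement to prove is Lemma~\ref{lem:planar-domination}. It has two parts, and both are direct computations; no analytic input beyond the definitions is needed.

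\textbf{The norm identity \eqref{eq:Gu-norm}.} For $1\le u<\infty$, I will expand
\[
\|G_{F,u}\|_{\ell^u}^u=\sum_{(x,y)}\sum_t|F(x,y,t)|^u=\|F\|_{\ell^u}^u .
\]
This is just Fubini over the decomposition $\mathbb H_1(\mathbb F_q)=\mathbb F_q^2\times\mathbb F_q$. For $u=\infty$, both sides equal $\max_{x,y,t}|F(x,y,t)|$.

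\textbf{The pointwise domination \eqref{eq:MH-by-Mpl}.} Fix $[\mathbf v]=[a:b]$ and any basepoint $\mathbf p=(x_0,y_0,t_0)$. By the parametrization \eqref{eq:line-param-main}, the points of $L_{\mathbf p,[\mathbf v]}$ are $(x_0+sa,\,y_0+sb,\,t(s))$ for some function $t(s)$. The projection $\pi(x,y,t)=(x,y)$ sends $s\mapsto(x_0+sa,y_0+sb)$, and this map is injective because $(a,b)\ne(0,0)$. Hence $\pi$ is a bijection from $L_{\mathbf p,[\mathbf v]}$ onto the affine line $\ell=\{(x_0,y_0)+s(a,b)\}$, which is parallel to $[\mathbf v]$.

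Since $\pi$ is a bijection, each point $(x,y)\in\ell$ has exactly one lift on $L_{\mathbf p,[\mathbf v]}$, at some height $t$. For that lift,
\[
|F(x,y,t)|\le \max_{t'}|F(x,y,t')|\le G_{F,u}(x,y),
\]
where the last inequality holds because a single term is bounded by the $\ell^u$ norm of the whole fiber, for any $u$. Summing over the line,
\[
\sum_{\mathbf z\in L_{\mathbf p,[\mathbf v]}}|F(\mathbf z)|=\sum_{(x,y)\in\ell}|F(x,y,t(x,y))|\le\sum_{(x,y)\in\ell}G_{F,u}(x,y)\le \mathcal M_2G_{F,u}([\mathbf v]).
\]
Taking the maximum over $\mathbf p$ then gives \eqref{eq:MH-by-Mpl}.

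The only point requiring any care is the bijectivity of the projection, since it guarantees that each fiber is hit at most once. This is immediate from the explicit parametrization.
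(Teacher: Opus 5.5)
Your proposal is correct and follows the paper's proof essentially verbatim: the norm identity by expanding over $\mathbb{F}_q^2\times\mathbb{F}_q$, and the domination via the bijective projection of $L_{\mathbf p,[\mathbf v]}$ onto the affine line $\ell$, bounding each lifted value by $G_{F,u}(x,y)$, summing, and maximizing over $\mathbf p$. You also spell out the $u=\infty$ case, which the paper only says works similarly.
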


\begin{proof}
For given $F$ and $u$, write $G:=G_{F,u}$ for simplicity. Below we only show details of the proof for the case $1\leq u<\infty$. Similar arguments work for the case $u=\infty$. 

The identity \eqref{eq:Gu-norm} follows by expanding both sides:
\[
\|G\|_{\ell^u(\mathbb{F}_q^2)}^u
=
\sum_{x,y \in \mathbb{F}_q}\sum_{t \in \mathbb{F}_q} |F(x,y,t)|^u
=
\|F\|_{\ell^u(\mathbb{H}_1(\mathbb{F}_q))}^u.
\]

Fix $[\mathbf{v}]\in\mathbb{P}^1(\mathbb{F}_q)$ and choose a representative $\mathbf{v}=(a,b)\neq (0,0)$. Fix $\mathbf{p}=(x_0,y_0,t_0)$ and set $L=L_{\mathbf{p}, [\mathbf{v}]}$. Let $\pi:\mathbb H_1(\mathbb F_q)\to\mathbb F_q^2$ be the projection defined by $\pi(x,y,t)=(x,y)$. 
By \eqref{eq:line-param-main}, the projected set $\pi(L)$ equals the affine line
\[
\ell=\{(x_0+as,\ y_0+bs):s\in\mathbb{F}_q\}\subset \mathbb{F}_q^2,
\]
which has direction $[\mathbf{v}]$. Since $\mathbf{v}\neq 0$, the map $s\mapsto (x_0+as,y_0+bs)$ is a bijection from $\mathbb{F}_q$ onto $\ell$.
Therefore, for each $(x,y)\in \ell$, there is a unique $t=t(x,y)$ such that $(x,y,t)\in L$. Hence
\[
\sum_{\mathbf{z}\in L}|F(\mathbf{z})|
=
\sum_{(x,y)\in \ell} |F(x,y,t(x,y))|.
\]
For each fixed $(x,y)$, one has
\[
|F(x,y,t(x,y))|
\le
\Bigl(\sum_{t\in\mathbb{F}_q}|F(x,y,t)|^u\Bigr)^{\frac{1}{u}}
=
G(x,y).
\]
Summing over $(x,y)\in\ell$ implies
\[
\sum_{\mathbf{z}\in L}|F(\mathbf{z})|
\le
\sum_{(x,y)\in \ell} G(x,y)
\le
\mathcal{M}_{2}G([\mathbf{v}]).
\]
Taking the maximum over $\mathbf{p}$ gives \eqref{eq:MH-by-Mpl}. 
\end{proof}

\begin{remark}\label{goodremark}
It is worth noting that the projection domination $\mathcal M_{\mathbb{H}_1} F\le \mathcal M_2G_{F,u}$ is not reversible for any $1\leq u\leq \infty$. To see this, let $q$ be odd and fix a nonsquare $\eta\in\mathbb F_q^*$ and define
\[
\phi:\mathbb F_q^2\to\mathbb F_q,
\qquad
\phi(x,y):=x^2-\eta y^2.
\]
Let
\[
E_\phi:=\{(x,y,t)\in\mathbb H_1(\mathbb F_q):\ t=\phi(x,y)\}.
\]
Setting $F=\mathbf 1_{E_\phi}$, we have $G_{F,u}=1$ for any $1\leq u\leq \infty$. For all directions $[\mathbf{v}]$, one verifies that $\mathcal M_2G([\mathbf v])= q$ and $\mathcal M_{\mathbb{H}_1} F([\mathbf v])\le 2$, since every horizontal line meets $E_\phi$ in at most two points. 
\end{remark}

\subsection{Planar Kakeya $\ell^2$ bound}

\begin{lemma}\label{lem:planar-l2}
For every function $f:\mathbb{F}_q^2\to\mathbb{C}$, one has
\[
\|\mathcal{M}_{2}f\|_{\ell^2(\mathbb{P}^{1}(\mathbb{F}_q))}
\le
\sqrt{2}\,q^{\frac{1}{2}}\,\|f\|_{\ell^2(\mathbb{F}_q^2)}.
\]
\end{lemma}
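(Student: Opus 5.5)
We prove Lemma~\ref{lem:planar-l2} by linearizing the maximal operator and applying a $TT^*$ argument. Since $\mathcal M_2 f\le \mathcal M_2|f|$ pointwise and $\|\,|f|\,\|_{\ell^2}=\|f\|_{\ell^2}$, we may assume $f\ge 0$. For each $[\mathbf v]\in\mathbb P^1(\mathbb F_q)$ choose a line $\ell_{[\mathbf v]}$ with direction $[\mathbf v]$ attaining the maximum in \eqref{eq:Mpl}. Then $\mathcal M_2 f([\mathbf v])=Tf([\mathbf v])$, where
\[
Tg([\mathbf v]):=\sum_{\mathbf x\in\ell_{[\mathbf v]}} g(\mathbf x)
\]
is linear in $g$. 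It therefore suffices to show $\|T\|_{\ell^2(\mathbb F_q^2)\to\ell^2(\mathbb P^1(\mathbb F_q))}\le\sqrt{2q}$, uniformly over all choices of one line per direction.

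For this we compute $TT^*$. The adjoint is $T^*h(\mathbf x)=\sum_{[\mathbf v]}h([\mathbf v])\mathbf 1_{\ell_{[\mathbf v]}}(\mathbf x)$, so $TT^*$ is the $(q+1)\times(q+1)$ matrix with entries
\[
K([\mathbf v],[\mathbf w])=|\ell_{[\mathbf v]}\cap\ell_{[\mathbf w]}|.
\]
This equals $q$ on the diagonal and at most $1$ off the diagonal, since lines with distinct directions in $\mathbb F_q^2$ meet in exactly one point. By Schur's test (row sums of a symmetric nonnegative kernel), $\|TT^*\|_{\ell^2\to\ell^2}\le q+q\cdot 1=2q$. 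Hence $\|T\|=\|TT^*\|^{1/2}\le\sqrt{2q}$, which gives
\[
\|\mathcal M_2 f\|_{\ell^2(\mathbb P^1(\mathbb F_q))}=\|Tf\|_{\ell^2}\le\sqrt2\,q^{1/2}\|f\|_{\ell^2(\mathbb F_q^2)}.
\]
One could refine the constant by writing $K=(q-1)I+J$ with $J$ the all-ones matrix, but this is not needed.

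The only step requiring care is the linearization. The chosen lines depend on $f$, so $T$ depends on $f$. This is harmless because the bound on $\|T\|$ is uniform over every selection of one line per direction, and we apply that bound to the particular selection adapted to $f$. Everything else reduces to the incidence fact that two non-parallel affine lines in $\mathbb F_q^2$ meet in exactly one point.
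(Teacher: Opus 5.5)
Your proposal is correct and matches the paper's proof almost step for step: you reduce to $f\ge 0$, linearize by fixing one line per direction, and bound $\|T\|^2=\|TT^*\|$ using the incidence kernel, which equals $q$ on the diagonal and $1$ off it. The only difference is that you bound $\|TT^*\|$ by Schur's row-sum test, whereas the paper diagonalizes $TT^*=(q-1)I+J$; both give exactly $2q$, since the constant vector attains the row sum, so nothing is lost.
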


\begin{proof}
Let $\widetilde{f}:=|f|$. 
For every affine line $\ell\subset\mathbb{F}_q^2$, one has
\[
\sum_{\mathbf x\in\ell}|f(\mathbf x)|
=
\sum_{\mathbf x\in\ell}\tilde{f}(\mathbf x).
\]
Taking the maximum over all affine lines $\ell$ with direction $[\mathbf{v}]$ implies
\[
\mathcal{M}_{2}f([\mathbf{v}])=\mathcal{M}_{2}\tilde{f}([\mathbf{v}])
\qquad\text{for all }[\mathbf{v}]\in\mathbb{P}^1(\mathbb{F}_q).
\]
Also,
\[
\|\tilde{f}\|_{\ell^2(\mathbb{F}_q^2)}=\|f\|_{\ell^2(\mathbb{F}_q^2)}.
\]
Therefore, it is enough to prove the stated inequality for $\tilde{f}$. 
We may assume without loss of generality that $f\ge 0$.

For each direction $[\mathbf{v}]\in\mathbb{P}^1(\mathbb{F}_q)$, we choose an affine line
$\ell_{[\mathbf{v}]}\subset\mathbb{F}_q^2$ with direction $[\mathbf{v}]$. 
For this fixed family $\{\ell_{[\mathbf{v}]}\}_{[\mathbf{v}]}$, define the linear operator
$T:\ell^2(\mathbb{F}_q^2)\to \ell^2(\mathbb{P}^1(\mathbb{F}_q))$ by
\[
Th([\mathbf{v}]):=\sum_{\mathbf x\in\ell_{[\mathbf{v}]}} h(\mathbf x).
\]
It is enough to prove that the estimate 
\begin{equation}\label{eq111}
\|T\|_{\ell^2(\mathbb{F}_q^2)\to \ell^2(\mathbb{P}^1(\mathbb{F}_q))}\le \sqrt{2q},
\end{equation}
holds uniformly over all such line families. Here $\|T\|_{X\to Y}:=\sup\{\|Th\|_Y:\ \|h\|_X=1\}$ denotes the operator norm. 

To see how this implies the conclusion, for a given function $f$, and each $[\mathbf{v}]\in\mathbb{P}^1(\mathbb{F}_q)$, we choose an affine line
$\ell_{[\mathbf{v}]}\subset\mathbb{F}_q^2$ with direction $[\mathbf{v}]$ such that
\[
\mathcal{M}_{2}f([\mathbf{v}])=\sum_{\mathbf x\in \ell_{[\mathbf{v}]}} f(\mathbf x).
\]
Let $T$ be the operator associated with this family.
Since the bound $\|T\|_{\ell^2(\mathbb{F}_q^2)\to \ell^2(\mathbb{P}^1(\F_q))}\le \sqrt{2q}$ is uniform over all line families, it applies to this choice.
Therefore, $Tf=\mathcal{M}_{2}f$, and
\begin{equation} \label{eq_result_need}
\|\mathcal{M}_{2}f\|_{\ell^2(\mathbb{P}^1(\F_q))}
=
\|Tf\|_{\ell^2(\mathbb{P}^1(\mathbb{F}_q))}
\le
\sqrt{2q}\,\|f\|_{\ell^2(\mathbb{F}_q^2)}.
\end{equation}

We now prove the estimate (\ref{eq111}).
We compute the adjoint $T^\ast:\ell^2(\mathbb{P}^1(\mathbb{F}_q))\to \ell^2(\mathbb{F}_q^2)$.
For $g:\mathbb{P}^1(\mathbb{F}_q)\to\mathbb{C}$ and $\mathbf x\in\mathbb{F}_q^2$, one has
\[
T^\ast g(\mathbf x)=\sum_{[\mathbf{v}]\in\mathbb{P}^1(\mathbb{F}_q)} g([\mathbf{v}])\,\mathbf{1}_{\ell_{[\mathbf{v}]}}(\mathbf x).
\]
Therefore, for each $[\mathbf{v}]\in\mathbb{P}^1(\mathbb{F}_q)$,
\begin{align*}
(TT^\ast g)([\mathbf{v}])
&=
\sum_{\mathbf x \in\ell_{[\mathbf{v}]}} T^*g(\mathbf x)
=
\sum_{\mathbf x \in\ell_{[\mathbf{v}]}}\ \sum_{[\mathbf{v}']\in\mathbb{P}^1(\mathbb{F}_q)}
g([\mathbf{v}'])\,\mathbf{1}_{\ell_{[\mathbf{v}']}}(\mathbf x) \\
&=
\sum_{[\mathbf{v}']\in\mathbb{P}^1(\mathbb{F}_q)} g([\mathbf{v}'])\,|\ell_{[\mathbf{v}]}\cap \ell_{[\mathbf{v}']}|.
\end{align*}
If $[\mathbf{v}']=[\mathbf{v}]$, then $|\ell_{[\mathbf{v}]}\cap \ell_{[\mathbf{v}']}|=|\ell_{[\mathbf{v}]}|=q$.
If $[\mathbf{v}']\neq [\mathbf{v}]$, then $\ell_{[\mathbf{v}]}$ and $\ell_{[\mathbf{v}']}$ have distinct directions, hence they meet in exactly one point in $\mathbb{F}_q^2$, so $|\ell_{[\mathbf{v}]}\cap \ell_{[\mathbf{v}']}|=1$.
Thus
\begin{equation}\label{eq:TTstar}
(TT^\ast g)([\mathbf{v}])
=
q\,g([\mathbf{v}])
+
\sum_{\substack{[\mathbf{v}']\in\mathbb{P}^1(\mathbb{F}_q)\\ [\mathbf{v}']\neq [\mathbf{v}]}}
g([\mathbf{v}']).
\end{equation}
Let $J:\ell^2(\mathbb{P}^1(\mathbb{F}_q))\to \ell^2(\mathbb{P}^1(\mathbb{F}_q))$ be the rank--one operator
\[
Jg([\mathbf{v}]):=\sum_{[\mathbf{v}']\in\mathbb{P}^1(\mathbb{F}_q)} g([\mathbf{v}']).
\]
Then \eqref{eq:TTstar} may be rewritten as
\[
TT^\ast=(q-1)I+J.
\]
We now determine the eigenvalues of $J$.
Let $\mathbf{1}$ denote the constant function $\mathbf{1}([\mathbf{v}]):=1$ on $\mathbb{P}^1(\mathbb{F}_q)$.
Then
\[
J\mathbf{1}([\mathbf{v}])=\sum_{[\mathbf{v}']\in\mathbb{P}^1(\mathbb{F}_q)} 1
=
|\mathbb{P}^1(\mathbb{F}_q)|
=
q+1,
\]
so $\mathbf{1}$ is an eigenvector of $J$ with eigenvalue $q+1$.

Next, consider the orthogonal complement of $\mathrm{span}\{\mathbf{1}\}$ in
$\ell^2(\mathbb{P}^1(\mathbb{F}_q))$, namely
\[
\Bigl(\mathrm{span}\{\mathbf{1}\}\Bigr)^\perp
=
\Bigl\{g:\mathbb{P}^1(\mathbb{F}_q)\to\mathbb{C}:\ \langle g,\mathbf{1}\rangle=0\Bigr\}.
\]
Since the inner product is $\langle g,h\rangle=\sum\limits_{[\mathbf{v}]} g([\mathbf{v}])\overline{h([\mathbf{v}])}$, the condition
$\langle g,\mathbf{1}\rangle=0$ is equivalent to
\[
\sum_{[\mathbf{v}]\in\mathbb{P}^1(\mathbb{F}_q)} g([\mathbf{v}])=0.
\]
For such $g$, one has $Jg\equiv 0$, hence every $g\in(\mathrm{span}\{\mathbf{1}\})^\perp$ is an eigenvector of $J$ with eigenvalue $0$.

Therefore, the spectrum of $J$ consists of the eigenvalue $q+1$ on $\mathrm{span}\{\mathbf{1}\}$ and the eigenvalue $0$ on its orthogonal complement.
It follows that the spectrum of $TT^*=(q-1)I+J$ consists of exactly two eigenvalues
\[
(q-1)+(q+1)=2q
\qquad\text{and}\qquad
(q-1)+0=q-1.
\]
Hence,
\[
\|TT^\ast\|_{\ell^2(\mathbb{P}^1(\mathbb{F}_q))\to \ell^2(\mathbb{P}^1(\mathbb{F}_q))}=2q.
\]
Consequently,
\[
\|T\|_{\ell^2(\mathbb{F}_q^2)\to \ell^2(\mathbb{P}^1(\mathbb{F}_q))}^2
=
\|TT^\ast\|_{\ell^2(\mathbb{P}^1(\mathbb{F}_q))\to \ell^2(\mathbb{P}^1(\mathbb{F}_q))}
=
2q,
\]
so $\|T\|_{\ell^2(\mathbb{F}^2_q)\to \ell^2(\mathbb{P}^1(\F_q))}\le \sqrt{2q}$. Now \eqref{eq_result_need} follows, and the proof is completed. 
\end{proof}

\section{Upper bound for the diagonal (Theorem \ref{thm:exact_A})}\label{section3}

\subsection{Planar endpoint bounds}

\begin{lemma}
\label{lem:planar-endpoints}
For all $G:\mathbb{F}_q^2\to[0,\infty)$, we have
\begin{align}
\|\mathcal{M}_{2}G\|_{\ell^1(\mathbb{P}^1(\mathbb{F}_q))} &\le (q+1)\,\|G\|_{\ell^1(\mathbb{F}_q^2)},\label{eq:pl-11}\\
\|\mathcal{M}_{2}G\|_{\ell^2(\mathbb{P}^1(\mathbb{F}_q))} &\le \sqrt{2}\,q^{\frac{1}{2}}\,\|G\|_{\ell^2(\mathbb{F}_q^2)},\label{eq:pl-22}\\
\|\mathcal{M}_{2}G\|_{\ell^\infty(\mathbb{P}^1(\mathbb{F}_q))} &\le q\,\|G\|_{\ell^\infty(\mathbb{F}_q^2)}.\label{eq:pl-infty}
\end{align}
\end{lemma}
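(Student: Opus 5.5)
The three inequalities are of different nature, so I will prove them separately. In each case I use directly the definition of $\mathcal{M}_2$ as a maximum of sums along affine lines of a fixed direction. I also use that $G\ge 0$, so no absolute values are needed.

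For \eqref{eq:pl-infty}: for any direction $[\mathbf v]$ and any line $\ell\parallel[\mathbf v]$, the line has exactly $q$ points. Hence
\[
\sum_{\mathbf x\in\ell}G(\mathbf x)\le q\,\|G\|_{\ell^\infty(\mathbb F_q^2)}.
\]
Taking the maximum over $\ell$ and then over $[\mathbf v]$ gives the bound.

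For \eqref{eq:pl-11}: for each fixed $[\mathbf v]$, the lines parallel to $[\mathbf v]$ partition $\mathbb F_q^2$. So for any single such line,
\[
\mathcal M_2G([\mathbf v])=\max_{\ell\parallel[\mathbf v]}\sum_{\mathbf x\in\ell}G(\mathbf x)\le\sum_{\mathbf x\in\mathbb F_q^2}G(\mathbf x)=\|G\|_{\ell^1}.
\]
Summing over the $q+1$ elements of $\mathbb P^1(\mathbb F_q)$ yields the factor $q+1$.

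Estimate \eqref{eq:pl-22} is exactly Lemma~\ref{lem:planar-l2} applied to $f=G$, and needs no new work.

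None of these steps presents a real obstacle. The only substantive input, the $TT^*$ computation behind \eqref{eq:pl-22}, has already been carried out in Lemma~\ref{lem:planar-l2}. The remaining two bounds are immediate from counting: each line has $q$ points, parallel lines form a partition, and there are $q+1$ directions.
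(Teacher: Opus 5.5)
Your proposal is correct and follows the paper's proof essentially verbatim: the $\ell^1$ bound comes from $\mathcal{M}_2G([\mathbf v])\le\|G\|_{\ell^1(\mathbb{F}_q^2)}$ summed over the $q+1$ directions, the $\ell^\infty$ bound from $|\ell|=q$, and the $\ell^2$ bound is quoted from Lemma~\ref{lem:planar-l2}. The partition remark in your $\ell^1$ step is not needed; $G\ge 0$ and $\ell\subset\mathbb{F}_q^2$ already suffice.
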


\begin{proof}
For \eqref{eq:pl-11}, note that for every direction class $[\mathbf{v}]$, one has
\[
\mathcal{M}_{2} G([\mathbf{v}])
    \le \sum_{\mathbf{z}\in\mathbb{F}_q^{\,2}} G(\mathbf{z})
    = \|G\|_{\ell^{1}(\mathbb{F}_q^{2})}.
\]
Summing over all $[\mathbf{v}]$ immediately implies \eqref{eq:pl-11}. For \eqref{eq:pl-infty}, observe that each affine line in $\mathbb{F}_q^{\,2}$ contains exactly $q$ points, and hence
\[
\sum_{\mathbf{z}\in\ell} G(\mathbf{z})
    \le q\,\|G\|_{\ell^{\infty}(\mathbb{F}_q^{2})}
    \qquad \text{for every line } \ell \subset \mathbb{F}_q^2.
\]
Taking the maximum over all lines $\ell$ gives \eqref{eq:pl-infty}. Finally, \eqref{eq:pl-22} follows directly from Lemma~\ref{lem:planar-l2}.
\end{proof}

\subsection{A planar $\ell^u\to\ell^u$ theorem}

\begin{theorem}
\label{thm:planar-all-u}
For every $1\le u\le\infty$ and for all $G:\mathbb{F}_q^2\to[0,\infty)$, we have 
\[
\|\mathcal{M}_{2}G\|_{\ell^u(\mathbb{P}^1(\mathbb{F}_q))}
\le \sqrt{2}
\,q^{\tau(u)}\,\|G\|_{\ell^u(\mathbb{F}_q^2)}
\]
where
\[
\tau(u):=
\begin{cases}
\displaystyle \frac1u, & 1\le u\le 2,\\[0.4em]
\displaystyle 1-\frac1u, & 2\le u\le \infty.
\end{cases}
\]
\end{theorem}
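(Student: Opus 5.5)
The plan is to interpolate between the three endpoint estimates of Lemma~\ref{lem:planar-endpoints}, handling the sublinear maximal operator through linearization exactly as in the proof of Lemma~\ref{lem:planar-l2}. Since $\mathcal M_2 G=\mathcal M_2|G|$, we may take $G\ge0$. Fix an arbitrary family of lines $\{\ell_{[\mathbf v]}\}_{[\mathbf v]\in\PP^1(\Fq)}$, one in each direction, and let
\[
Th([\mathbf v])=\sum_{\mathbf x\in\ell_{[\mathbf v]}}h(\mathbf x).
\]
This $T$ is linear, and $|Th|\le \mathcal M_2|h|$ pointwise. The key point is that the proofs of \eqref{eq:pl-11}, \eqref{eq:pl-22} and \eqref{eq:pl-infty} bound $T$ for every such family, with no dependence on the choice. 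So $T$ satisfies the same three estimates with the same constants, for all complex $h$:
\[
\|T\|_{\ell^1\to\ell^1}\le q+1,\qquad \|T\|_{\ell^2\to\ell^2}\le\sqrt{2q},\qquad \|T\|_{\ell^\infty\to\ell^\infty}\le q.
\]

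For $1\le u\le 2$, apply Lemma~\ref{interpolation-inequality} with $(p_0,r_0)=(1,1)$ and $(p_1,r_1)=(2,2)$. Take $\theta$ with $\frac1u=1-\frac\theta2$, so $\theta=2-\frac2u$. This gives
\[
\|T\|_{\ell^u\to\ell^u}\le (q+1)^{1-\theta}(2q)^{\theta/2}.
\]
Here $1-\theta=\frac2u-1$ and $\theta/2=1-\frac1u$, so the $q$-power is $q^{2/u-1+1-1/u}=q^{1/u}$. The constant is $(1+1/q)^{2/u-1}2^{1-1/u}$. Its exponents sum to $1-1/u+(2/u-1)$, and since $1+1/q\le 2$ it is at most $2^{1/u}\le 2$.

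For $2\le u\le\infty$, interpolate between $(2,2)$ and $(\infty,\infty)$ with $\frac1u=\frac{1-\theta}2$. This gives
\[
\|T\|_{\ell^u\to\ell^u}\le (2q)^{(1-\theta)/2}q^{\theta}=2^{1/u}q^{1/u+1-2/u}=2^{1/u}q^{1-1/u}.
\]
Here $2^{1/u}\le\sqrt2$, which matches the stated constant.

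Now, given $G$, choose each $\ell_{[\mathbf v]}$ to attain $\mathcal M_2G([\mathbf v])$. Then $TG=\mathcal M_2G$, and the bound for $T$ transfers to $\mathcal M_2$.

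The main obstacle is bookkeeping the constant in the range $1\le u\le2$. Carried out honestly, the interpolation gives $(q+1)^{2/u-1}(2q)^{1-1/u}$, which is $q^{1/u}$ times a constant that can exceed $\sqrt2$ slightly; at $u=1$ it equals $q+1$, not $\sqrt2\,q$. I would handle this in one of two ways. One option is to sharpen \eqref{eq:pl-11} for $T$ to $\|Th\|_{\ell^1}\le \sum_{\mathbf x}|h(\mathbf x)|\cdot\#\{[\mathbf v]:\mathbf x\in\ell_{[\mathbf v]}\}$. The other is to accept the constant $(1+1/q)\sqrt2$ and absorb it, noting that the statement is only used up to absolute constants. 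The exponent $\tau(u)$, which is what matters downstream, comes out exactly as claimed.
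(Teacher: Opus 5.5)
Your approach is the paper's: linearize over an arbitrary family of lines, record the $\ell^1$, $\ell^2$ and $\ell^\infty$ bounds uniformly in the family, and apply Riesz--Thorin. The exponent $\tau(u)$ and the constant $2^{1/u}\le\sqrt2$ for $2\le u\le\infty$ are correct.

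\textbf{The gap.} For $1\le u<2$ you do not prove the stated constant $\sqrt2$. You also assert, incorrectly, that the interpolated constant can exceed $\sqrt2$. That constant is $(1+\tfrac1q)^{1-\theta}2^{\theta/2}$ with $\theta=2-\tfrac2u\in[0,1]$.

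\textbf{The missing observation.} Since $q$ is an odd prime power, $q\ge 3$, so $1+\tfrac1q\le\tfrac43<\sqrt2$. Therefore
\[
\Bigl(1+\tfrac1q\Bigr)^{1-\theta}2^{\theta/2}\le \Bigl(\tfrac43\Bigr)^{1-\theta}\bigl(\sqrt2\bigr)^{\theta}\le \sqrt2 ,
\]
because a weighted geometric mean of $\tfrac43$ and $\sqrt2$ is at most the larger of the two. This is exactly how the paper closes the argument: it writes the constant as $\tfrac43\bigl(\tfrac{3\sqrt2}{4}\bigr)^{\theta}$ and notes that this is increasing in $\theta$. In particular, at $u=1$ you need $q+1\le\sqrt2\,q$, which holds precisely because $q\ge3$.

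\textbf{Your first proposed fix cannot work.} Take a concurrent family of lines. The incidence count $\#\{[\mathbf v]:\mathbf x\in\ell_{[\mathbf v]}\}$ equals $q+1$ at the common point, so $\|T\|_{\ell^1\to\ell^1}=q+1$ exactly. Equivalently, $G=\delta_{\mathbf x_0}$ gives $\|\mathcal M_2G\|_{\ell^1(\mathbb P^1(\mathbb F_q))}=q+1$. So no sharpening of the $\ell^1$ endpoint is available.

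Indeed, for $q=2$ the $u=1$ case of the statement with constant $\sqrt2$ is false, since $3>2\sqrt2$. The oddness of $q$ is therefore genuinely needed, and it enters only through $q\ge3$.

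\textbf{Your second option.} Settling for the constant $2^{1/u}$ or $(1+\tfrac1q)\sqrt2$ would suffice for determining the exponents $A_1(u,v)$ later. However, it proves a weaker inequality than the one stated.
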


 \begin{proof}
Let $G:\mathbb{F}_q^2\to[0,\infty)$ be given.
For each direction $[\mathbf{v}]\in\mathbb{P}^1(\mathbb{F}_q)$, choose an affine line
$\ell_{[\mathbf{v}]}\subset\mathbb{F}_q^2$ with direction $[\mathbf{v}]$ such that
\[
\mathcal{M}_{2}G([\mathbf{v}])=\sum_{\mathbf{x} \in \ell_{[\mathbf{v}]}} G(\mathbf{x}).
\]
Define the linear operator $T$ on functions $f:\mathbb{F}_q^2\to\mathbb{C}$ by
\[
(T f)([\mathbf{v}]):=\sum_{\mathbf{x} \in\ell_{[\mathbf{v}]}} f(\mathbf{x}),
\qquad [\mathbf{v}]\in\mathbb{P}^1(\mathbb{F}_q).
\]
Then
\begin{equation}\label{eq:TG-linearizes}
(T G)([\mathbf{v}])=\mathcal{M}_{2}G([\mathbf{v}])
\qquad\text{for all }[\mathbf{v}]\in\mathbb{P}^1(\mathbb{F}_q).
\end{equation}
Therefore,
\begin{equation}\label{eq:reduce-to-TG}
\|\mathcal{M}_{2}G\|_{\ell^u(\mathbb{P}^1(\mathbb{F}_q))}
=
\|T G\|_{\ell^u(\mathbb{P}^1(\mathbb{F}_q))}.
\end{equation}

We next record three endpoint bounds for $T$.

\medskip
\noindent\textbf{The $\ell^1$ bound.}
For every $f:\mathbb{F}_q^2\to\mathbb{C}$,
\begin{align}\label{eq:TG-l1}
\|T f\|_{\ell^1(\mathbb{P}^1(\mathbb{F}_q))}
&=
\sum_{[\mathbf{v}]\in\mathbb{P}^1(\mathbb{F}_q)}
\Bigl|\sum_{\mathbf{x} \in\ell_{[\mathbf{v}]}} f(\mathbf{x})\Bigr|
\le
\sum_{[\mathbf{v}]}\sum_{\mathbf{x} \in\ell_{[\mathbf{v}]}}|f(\mathbf{x})|
\le
(q+1)\sum_{\mathbf{x} \in\mathbb{F}_q^2}|f(\mathbf{x})| \notag\\
&=
(q+1)\,\|f\|_{\ell^1(\mathbb{F}_q^2)}.
\end{align}

\medskip
\noindent\textbf{The $\ell^\infty$ bound.}
For every $f:\mathbb{F}_q^2\to\mathbb{C}$ and every $[\mathbf{v}]\in \mathbb{P}^1(\mathbb{F}_q)$,
\[
|(T f)([\mathbf{v}])|
=
\Bigl|\sum_{\mathbf{x} \in\ell_{[\mathbf{v}]}} f(\mathbf{x})\Bigr|
\le
\sum_{\mathbf{x} \in\ell_{[\mathbf{v}]}}|f(\mathbf{x})|
\le
q\,\|f\|_{\ell^\infty(\mathbb{F}_q^2)},
\]
since each affine line in $\mathbb{F}_q^2$ has exactly $q$ points.
Taking the maximum over $[\mathbf{v}]\in \mathbb{P}^1(\mathbb{F}_q)$ implies
\begin{equation}\label{eq:TG-linfty}
\|T f\|_{\ell^\infty(\mathbb{P}^1(\mathbb{F}_q))}
\le
q\,\|f\|_{\ell^\infty(\mathbb{F}_q^2)}.
\end{equation}

\medskip
\noindent\textbf{The $\ell^2$ bound.}
The operator $T$ is obtained by selecting one affine line in each direction,
so the proof of Lemma~\ref{lem:planar-l2} applies verbatim to $T$.
Therefore, for every $f:\mathbb{F}_q^2\to\mathbb{C}$,
\begin{equation}\label{eq:TG-l2}
\|T f\|_{\ell^2(\mathbb{P}^1(\mathbb{F}_q))}
\le
\sqrt{2}\,q^{\frac12}\,\|f\|_{\ell^2(\mathbb{F}_q^2)}.
\end{equation}

We now interpolate the bounds to obtain the desired exponents.

\medskip
\noindent\textbf{Case 1: $1\le u\le 2$.}
Choose $\theta\in[0,1]$ so that
\[
\frac1u=(1-\theta)\cdot 1+\theta\cdot\frac12.
\]
Then $\theta=2\bigl(1-\frac1u\bigr)$.
Applying Lemma~\ref{interpolation-inequality} to the linear operator $T$,
using the endpoint estimates \eqref{eq:TG-l1} and \eqref{eq:TG-l2}, we obtain
{ 
\begin{align*}
\|Tf\|_{\ell^u(\mathbb{P}^1(\mathbb{F}_q))}
\le
(q+1)^{1-\theta}\bigl(\sqrt{2}\,q^{\frac12}\bigr)^{\theta}
\|f\|_{\ell^u(\mathbb{F}_q^2)} 
\le
\left(\frac43\right)^{1-\theta}
2^{\frac{\theta}{2}}
q^{1-\frac{\theta}{2}}
\|f\|_{\ell^u(\mathbb{F}_q^2)},
\end{align*}
where the last inequality follows from the assumption that $q\ge 3$.

Since $1\le u\le2$, we have $\theta\in[0,1]$. Moreover,
\[
\left(\frac43\right)^{1-\theta}2^{\frac{\theta}{2}}
=
\frac43\left(\frac{3\sqrt2}{4}\right)^\theta,
\]
which is an increasing function of $\theta$ because $\frac{3\sqrt2}{4}>1$.
Therefore,
\begin{align}
\label{eq:TG-u-1-2}
\|Tf\|_{\ell^u(\mathbb{P}^1(\mathbb{F}_q))}
\le
\left(\frac43\right)^{1-\theta} \, 
2^{\frac{\theta}{2}} \, 
q^{1-\frac{\theta}{2}} \, 
\|f\|_{\ell^u(\mathbb{F}_q^2)}
\le
\sqrt{2}\,
q^{1-\frac{\theta}{2}} \, 
\|f\|_{\ell^u(\mathbb{F}_q^2)}.
\end{align}
}
From here, we apply \eqref{eq:TG-u-1-2} with $f=G$ and use \eqref{eq:reduce-to-TG}.

\medskip
\noindent\textbf{Case 2: $2\le u\le \infty$.}
Choose $\theta\in[0,1]$ so that
\[
\frac1u=(1-\theta)\cdot\frac12+\theta\cdot 0.
\]
Then $\theta=1-\frac{2}{u}$.
Applying Lemma~\ref{interpolation-inequality} to the linear operator $T$,
using the endpoint estimates \eqref{eq:TG-l2} and \eqref{eq:TG-linfty}, we obtain 
\[
\|T f\|_{\ell^u(\mathbb{P}^1(\mathbb{F}_q))}
\le
\bigl(\sqrt{2}q^{\frac12}\bigr)^{1-\theta}\,q^{\theta}\,
\|f\|_{\ell^u(\mathbb{F}_q^2)}
\leq \sqrt{2}
\,q^{\frac{1-\theta}{2}+\theta}\,\|f\|_{\ell^u(\mathbb{F}_q^2)}.
\]
Since $\frac{1-\theta}{2}+\theta=1-\frac1u$, this implies
\[
\|T f\|_{\ell^u(\mathbb{P}^1(\mathbb{F}_q))}
\le \sqrt{2} \,q^{1-\frac1u}\,\|f\|_{\ell^u(\mathbb{F}_q^2)}.
\]

Combining the two cases gives
\[
\|T f\|_{\ell^u(\mathbb{P}^1(\mathbb{F}_q))}
\le \sqrt{2}  \,q^{\tau(u)}\,\|f\|_{\ell^u(\mathbb{F}_q^2)}.
\]
Taking $f=G$ and combining \eqref{eq:reduce-to-TG}, the theorem then follows.
\end{proof}

\subsection{Diagonal bounds for $\mathcal{M}_{\mathbb{H}_1}$}

\begin{theorem}
\label{thm:diag}
For every $1\le u\le \infty$ and for all $F:\mathbb{H}_1(\mathbb{F}_q)\to \mathbb{C}$, we have 
\[
\|\mathcal{M}_{\mathbb{H}_1} F\|_{\ell^u(\mathbb{P}^1(\mathbb{F}_q))}
\le  \sqrt{2} \, q^{\tau(u)}\, 
\|F\|_{\ell^u(\mathbb{H}_1(\mathbb{F}_q))}
\]
 where $\tau(u)$ is as in Theorem~\ref{thm:planar-all-u}.
\end{theorem}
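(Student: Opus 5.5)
This follows directly by combining the planar domination principle (Lemma~\ref{lem:planar-domination}) with the planar diagonal estimate (Theorem~\ref{thm:planar-all-u}), both applied with the same exponent $u$.

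Fix $1\le u\le\infty$ and $F:\mathbb H_1(\mathbb F_q)\to\mathbb C$, and set $G:=G_{F,u}$, which is a nonnegative function on $\mathbb F_q^2$. By \eqref{eq:MH-by-Mpl} we have the pointwise bound
\[
0\le \mathcal M_{\mathbb H_1}F([\mathbf v])\le \mathcal M_2 G([\mathbf v])
\]
for every $[\mathbf v]\in\mathbb P^1(\mathbb F_q)$. Since the $\ell^u(\mathbb P^1(\mathbb F_q))$ norm is monotone on nonnegative functions (for $u<\infty$ as a sum of $u$-th powers, for $u=\infty$ as a maximum), this yields
\[
\|\mathcal M_{\mathbb H_1}F\|_{\ell^u(\mathbb P^1(\mathbb F_q))}\le\|\mathcal M_2 G\|_{\ell^u(\mathbb P^1(\mathbb F_q))}.
\]

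Next we apply Theorem~\ref{thm:planar-all-u} to $G\ge0$, which gives
\[
\|\mathcal M_2 G\|_{\ell^u(\mathbb P^1(\mathbb F_q))}\le \sqrt2\, q^{\tau(u)}\|G\|_{\ell^u(\mathbb F_q^2)}.
\]
Finally, the norm identity \eqref{eq:Gu-norm} replaces $\|G\|_{\ell^u(\mathbb F_q^2)}$ by $\|F\|_{\ell^u(\mathbb H_1(\mathbb F_q))}$, and chaining the three steps gives the claimed inequality.

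None of these steps is a genuine obstacle. The only point needing a remark is the case $u=\infty$: there one must use $G_{F,\infty}$, the $t$-fiber maximum, and check that \eqref{eq:Gu-norm} and the domination both hold for it. Lemma~\ref{lem:planar-domination} states both of these for $u=\infty$, so the argument goes through unchanged.
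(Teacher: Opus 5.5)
Your proof is correct and is the paper's own argument: set $G=G_{F,u}$, use the pointwise domination \eqref{eq:MH-by-Mpl} together with monotonicity of the $\ell^u$ norm, apply Theorem~\ref{thm:planar-all-u} to $G$, and return to $F$ via the norm identity \eqref{eq:Gu-norm}. Your remark on the case $u=\infty$ also matches how Lemma~\ref{lem:planar-domination} is stated, since that lemma covers $G_{F,\infty}$ explicitly.
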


\begin{proof}
For $1\le u\leq \infty$, define $G:=G_{F,u}$ as in Lemma~\ref{lem:planar-domination}. It follows by \eqref{eq:Gu-norm}, \eqref{eq:MH-by-Mpl} and Theorem~\ref{thm:planar-all-u},
\[
\|\mathcal{M}_{\mathbb{H}_1} F\|_{\ell^u(\mathbb{P}^1(\mathbb{F}_q))}
\le
\|\mathcal{M}_{2}G\|_{\ell^u(\mathbb{P}^1(\mathbb{F}_q))}
\le \sqrt{2} \,q^{\tau(u)}\,\|G\|_{\ell^u(\mathbb{F}_q^2)}=\sqrt{2} \, q^{\tau(u)}\, 
\|F\|_{\ell^u(\mathbb{H}_1(\mathbb{F}_q))}.
\]
\end{proof}

\section{Upper bounds away from the diagonal (Theorem \ref{thm:exact_A})}\label{section4}

We now prove regionwise bounds which, when assembled, provide the upper bound in Theorem~\ref{thm:exact_A}.

\subsection{The case $A_1(u, v)\le 1+\frac{1}{v}-\frac{2}{u}$}

\begin{lemma}\label{thm:upper-left}
Let $2\le u\le \infty$ and $1\le v\le u$. Then, for all
$F:\mathbb{H}_1(\mathbb{F}_q)\to\mathbb{C}$, 
\[
\|\mathcal{M}_{\mathbb{H}_1} F\|_{\ell^v(\mathbb{P}^1(\mathbb{F}_q))}
\le 2\sqrt{2}
\,q^{\,1+\frac1v-\frac{2}{u}}\,
\|F\|_{\ell^u(\mathbb{H}_1(\mathbb{F}_q))}.
\]
\end{lemma}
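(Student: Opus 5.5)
The plan is to reduce from $\ell^v$ to the diagonal exponent $\ell^u$ on $\mathbb{P}^1(\mathbb{F}_q)$, apply Theorem~\ref{thm:diag}, and check that the resulting exponent matches the claimed one.

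\textbf{Step 1 (change of exponent on the direction space).} Since $v\le u$ and $|\mathbb{P}^1(\mathbb{F}_q)|=q+1$, the second inequality of Lemma~\ref{lem:embed} gives
\[
\|\mathcal{M}_{\mathbb{H}_1}F\|_{\ell^v(\mathbb{P}^1(\mathbb{F}_q))}
\le (q+1)^{\frac1v-\frac1u}\,\|\mathcal{M}_{\mathbb{H}_1}F\|_{\ell^u(\mathbb{P}^1(\mathbb{F}_q))}.
\]

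\textbf{Step 2 (diagonal bound).} Because $u\ge 2$, we have $\tau(u)=1-\frac1u$ in Theorem~\ref{thm:diag}, so
\[
\|\mathcal{M}_{\mathbb{H}_1}F\|_{\ell^u(\mathbb{P}^1(\mathbb{F}_q))}\le \sqrt2\, q^{1-\frac1u}\|F\|_{\ell^u(\mathbb{H}_1(\mathbb{F}_q))}.
\]

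\textbf{Step 3 (combining).} Putting the two steps together gives the exponent
\[
\frac1v-\frac1u+1-\frac1u=1+\frac1v-\frac2u,
\]
which is exactly the claimed one.

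\textbf{Step 4 (the constant).} Since $0\le \frac1v-\frac1u\le 1$ and $q\ge 3$, we have $(q+1)^{\frac1v-\frac1u}\le \bigl(\tfrac43\bigr)^{\frac1v-\frac1u}q^{\frac1v-\frac1u}\le 2\,q^{\frac1v-\frac1u}$. The overall constant is therefore at most $2\sqrt2$. If $u=\infty$ the same argument applies with $\frac1u=0$, and Theorem~\ref{thm:diag} is used with $\tau(\infty)=1$.

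\textbf{Main obstacle.} There is no real obstacle. The only point to check is that the embedding in Step 1 is applied in the correct direction: we pass from the smaller exponent $v$ to the larger exponent $u$ on the finite set $\mathbb{P}^1(\mathbb{F}_q)$, which is the inequality that costs a power of the cardinality.
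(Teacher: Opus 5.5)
Your proposal is correct and matches the paper's proof step for step. Both arguments use the $\ell^v\to\ell^u$ embedding on $\mathbb{P}^1(\mathbb{F}_q)$ at the cost of $(q+1)^{\frac1v-\frac1u}$, then Theorem~\ref{thm:diag} with $\tau(u)=1-\frac1u$, and finally absorb $(q+1)^{\frac1v-\frac1u}\le 2q^{\frac1v-\frac1u}$ into the constant $2\sqrt2$; your Step~4 simply spells out that last estimate more explicitly than the paper does.
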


\begin{proof}
Since $v\le u$, Lemma~\ref{lem:embed} gives
\[
\|\mathcal{M}_{\mathbb{H}_1} F\|_{\ell^v(\mathbb{P}^1(\mathbb{F}_q))}
\le
|\mathbb{P}^1(\mathbb{F}_q)|^{\frac1v-\frac1u}\,\|\mathcal{M}_{\mathbb{H}_1} F\|_{\ell^u(\mathbb{P}^1(\mathbb{F}_q))}.
\]
Since $u\ge 2$, Theorem~\ref{thm:diag} implies
\[
\|\mathcal{M}_{\mathbb{H}_1} F\|_{\ell^u(\mathbb{P}^1(\mathbb{F}_q))}
\le \sqrt{2}
\,q^{1-\frac1u}\,\|F\|_{\ell^u(\mathbb{H}_1(\mathbb{F}_q))}.
\]
Therefore,
\[
\|\mathcal{M}_{\mathbb{H}_1} F\|_{\ell^v(\mathbb{P}^1(\mathbb{F}_q))}
\le \sqrt{2} \,(q+1)^{\frac1v-\frac1u}\,q^{1-\frac1u}\,
\|F\|_{\ell^u(\mathbb{H}_1(\mathbb{F}_q))}
\le 2\sqrt{2}\,q^{\,1+\frac1v-\frac{2}{u}}\,
\|F\|_{\ell^u(\mathbb{H}_1(\mathbb{F}_q))}.
\]
This completes the proof.
\end{proof}

\subsection{The case $A_1(u, v)\le 1-\frac{1}{u}$}

\begin{lemma}\label{thm:blue}
Let $2\le u\le v\le\infty$. Then, for all
$F:\mathbb{H}_1(\mathbb{F}_q)\to\mathbb{C}$,  
\[
\|\mathcal{M}_{\mathbb{H}_1} F\|_{\ell^v(\mathbb{P}^1(\mathbb{F}_q))}
\le  \sqrt{2} \,q^{1-\frac{1}{u}}\,
\|F\|_{\ell^u(\mathbb{H}_1(\mathbb{F}_q))}.
\]
\end{lemma}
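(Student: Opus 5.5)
Since $u\le v$, the first inequality of Lemma~\ref{lem:embed} (monotonicity of $\ell^p$ norms on the finite set $\mathbb{P}^1(\mathbb{F}_q)$ with counting measure) gives
\[
\|\mathcal{M}_{\mathbb{H}_1} F\|_{\ell^v(\mathbb{P}^1(\mathbb{F}_q))}
\le
\|\mathcal{M}_{\mathbb{H}_1} F\|_{\ell^u(\mathbb{P}^1(\mathbb{F}_q))}.
\]
This costs no power of $q$, which is exactly why the target exponent $1-\frac1u$ does not depend on $v$.

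Next, since $u\ge 2$, we are in the second regime of Theorem~\ref{thm:diag}. There $\tau(u)=1-\frac1u$, so
\[
\|\mathcal{M}_{\mathbb{H}_1} F\|_{\ell^u(\mathbb{P}^1(\mathbb{F}_q))}
\le \sqrt{2}\,q^{1-\frac1u}\,\|F\|_{\ell^u(\mathbb{H}_1(\mathbb{F}_q))}.
\]
Chaining the two displays yields the claim with constant $\sqrt2$.

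For the endpoint $u=v=\infty$ the argument is the same: the embedding step is trivial, and Theorem~\ref{thm:diag} at $u=\infty$ gives the bound $q$ (indeed directly $q\|F\|_\infty$). There is no genuine obstacle here. The only point to check is that the direction of Lemma~\ref{lem:embed} used is the constant-free one, $\|g\|_{\ell^s}\le\|g\|_{\ell^r}$ for $r\le s$, applied with $r=u$ and $s=v$.
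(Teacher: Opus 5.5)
Your proposal is correct and is exactly the paper's argument. The paper also applies the constant-free monotonicity $\|g\|_{\ell^v}\le\|g\|_{\ell^u}$ from Lemma~\ref{lem:embed} to $g=\mathcal{M}_{\mathbb{H}_1}F$, and then invokes the $u\ge 2$ regime of Theorem~\ref{thm:diag}, where $\tau(u)=1-\frac1u$, to obtain the constant $\sqrt{2}$.
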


\begin{proof}
Since $u\le v$, applying Lemma~\ref{lem:embed} with $g=\mathcal{M}_{\mathbb{H}_1} F$ gives
\[
\|\mathcal{M}_{\mathbb{H}_1} F\|_{\ell^v(\mathbb{P}^1(\mathbb{F}_q))}
\le
\|\mathcal{M}_{\mathbb{H}_1} F\|_{\ell^u(\mathbb{P}^1(\mathbb{F}_q))}.
\]
Since $u\ge 2$, Theorem~\ref{thm:diag} implies
\[
\|\mathcal{M}_{\mathbb{H}_1} F\|_{\ell^u(\mathbb{P}^1(\mathbb{F}_q))}
\le \sqrt{2} \,q^{1-\frac{1}{u}}\,
\|F\|_{\ell^u(\mathbb{H}_1(\mathbb{F}_q))}.
\]
Thus the desired bound follows.
\end{proof}

\begin{lemma}\label{thm:lower-right-1}
Let $1\le u\le 2$ and $v\ge \frac{u}{u-1}$. Then,
for all
$F:\mathbb{H}_1(\mathbb{F}_q)\to\mathbb{C}$,
\[
\|\mathcal{M}_{\mathbb{H}_1} F\|_{\ell^v(\mathbb{P}^1(\mathbb{F}_q))}
\le \sqrt{2}\,q^{1-\frac1u}\,
\|F\|_{\ell^u(\mathbb{H}_1(\mathbb{F}_q))}.
\]
\end{lemma}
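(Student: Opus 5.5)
Since $1\le u\le 2$, the dual exponent $u'=\frac{u}{u-1}$ satisfies $u'\ge 2$, and the hypothesis reads $v\ge u'$. Lemma~\ref{lem:embed} gives $\|\mathcal M_{\mathbb H_1}F\|_{\ell^v}\le \|\mathcal M_{\mathbb H_1}F\|_{\ell^{u'}}$. It therefore suffices to prove the single off-diagonal estimate
\[
\|\mathcal M_{\mathbb H_1}F\|_{\ell^{u'}(\mathbb P^1(\mathbb F_q))}\le \sqrt2\,q^{1-\frac1u}\|F\|_{\ell^u(\mathbb H_1(\mathbb F_q))},\qquad \tfrac1{u'}=1-\tfrac1u .
\]

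The plan is to reduce to the plane by Lemma~\ref{lem:planar-domination} with $G=G_{F,u}$. Then $\mathcal M_{\mathbb H_1}F\le \mathcal M_2 G$ pointwise and $\|G\|_{\ell^u(\mathbb F_q^2)}=\|F\|_{\ell^u}$, so we need $\|\mathcal M_2G\|_{\ell^{u'}}\le\sqrt2\,q^{1/u'}\|G\|_{\ell^u}$ for $G\ge0$.

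As in the proof of Theorem~\ref{thm:planar-all-u}, linearize. Choose a maximizing line $\ell_{[\mathbf v]}$ in each direction and let $T$ be the associated operator $Tf([\mathbf v])=\sum_{\mathbf x\in\ell_{[\mathbf v]}}f(\mathbf x)$. Then $TG=\mathcal M_2G$, and it suffices to bound $T:\ell^u\to\ell^{u'}$ uniformly over line families.

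Interpolate between two endpoint estimates for $T$ using Lemma~\ref{interpolation-inequality}:
\begin{itemize}
\item $\ell^1\to\ell^\infty$ with constant $1$, since $|Tf([\mathbf v])|\le\|f\|_{\ell^1}$;
\item $\ell^2\to\ell^2$ with constant $\sqrt{2q}$, by Lemma~\ref{lem:planar-l2} applied verbatim to $T$.
\end{itemize}
With $\frac1u=(1-\theta)+\frac\theta2$, i.e.\ $\theta=2-\frac2u\in[0,1]$, the target exponent is $\frac1r=\frac\theta2=1-\frac1u=\frac1{u'}$, which is exactly the dual. The constant becomes $(\sqrt{2q})^{\theta}=2^{\theta/2}q^{1-\frac1u}\le\sqrt2\,q^{1-\frac1u}$, since $\theta\le1$.

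Combining the embedding, the domination, and this interpolated bound gives the lemma. I do not expect any real obstacle. The only point to check is that the $\ell^1\to\ell^\infty$ bound has constant $1$, independent of $q$: each line sum is at most the total mass. That is what makes the exponent come out as $1-\frac1u$ rather than something larger.
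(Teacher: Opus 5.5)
Your proof is correct and follows the paper's argument. You interpolate the $\ell^1\to\ell^\infty$ bound (constant $1$) with the $\ell^2\to\ell^2$ bound (constant $\sqrt{2q}$) at $\theta=2(1-\frac1u)$, which lands exactly on $\ell^u\to\ell^{u/(u-1)}$, and then use the embedding for $v\ge \frac{u}{u-1}$. The only cosmetic difference is that you project to the plane via $G_{F,u}$ before linearizing, whereas the paper linearizes directly over horizontal lines in $\mathbb{H}_1(\mathbb{F}_q)$ and takes the $(2,2)$ bound from Theorem~\ref{thm:diag}.
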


\begin{proof}
As before, it is sufficient to prove the claimed bound for non-negative $F$.

For each $[\mathbf{v}]\in\mathbb{P}^1(\mathbb{F}_q)$, choose a horizontal line $L_{[\bfv]}\in \cL([\bfv])$ such that 
\[
\mathcal{M}_{\mathbb{H}_1} F([\mathbf{v}])=\sum_{\mathbf{p}\in L_{[\mathbf{v}]}} F(\mathbf{p}).
\] 
With this family $\{L_{[\mathbf{v}]}\}_{[\mathbf{v}]}$ of lines, we define the associated linear transformation $T$ on functions $\mathcal{G}:\mathbb{H}_1(\mathbb{F}_q)\to\mathbb{C}$ by
\[
(T\mathcal{G})([\mathbf{v}]):= \sum_{\mathbf{p}\in L_{[\mathbf{v}]}} \mathcal{G}(\mathbf{p}).
\]
It is sufficient to prove that 
\[
\|T \mathcal{G}\|_{\ell^v(\mathbb{P}^1(\mathbb{F}_q))}
\le \sqrt{2}\,q^{1-\frac1u}\,
\|\mathcal{G}\|_{\ell^u(\mathbb{H}_1(\mathbb{F}_q))}.
\]
Then the lemma follows by taking $\mathcal G=F$.

We record two bounds for $T$.
For every $\mathcal{G}:\mathbb{H}_1(\mathbb{F}_q)\to\mathbb{C} $ and every $[\mathbf{v}]$,
\[
|(T\mathcal{G})([\mathbf{v}])|
\le
\sum_{\mathbf{p} \in\mathbb{H}_1(\mathbb{F}_q)} |\mathcal{G}(\mathbf{p})|.
\]
Therefore,
\begin{equation}\label{eq:lr1-1-infty}
\|T\mathcal{G}\|_{\ell^\infty(\mathbb{P}^1(\mathbb{F}_q))}
\le
\|\mathcal{G}\|_{\ell^1(\mathbb{H}_1(\mathbb{F}_q))}.
\end{equation}
Also, the diagonal estimate at $(2,2)$ applies to the chosen family of lines, so 
\begin{equation}\label{eq:lr1-2-2}
\|T\mathcal{G}\|_{\ell^2(\mathbb{P}^1(\mathbb{F}_q))}
\le \sqrt{2} \,q^{\frac12}\,\|\mathcal{G}\|_{\ell^2(\mathbb{H}_1(\mathbb{F}_q))}
\qquad\text{for all } \mathcal{G}:\mathbb{H}_1(\mathbb{F}_q)\to\mathbb{C}.
\end{equation}
From \eqref{eq:lr1-1-infty} and \eqref{eq:lr1-2-2}, we apply Lemma~\ref{interpolation-inequality} to obtain the desired inequality. 
To this end, we observe that \((p_0,r_0) = (1,\infty)\) and \((p_1,r_1) = (2,2)\).
We now choose a parameter \(\theta \in [0,1]\) such that
\[
\frac{1}{u} = \frac{1-\theta}{p_0} + \frac{\theta}{p_1},
    \qquad
\frac{1}{r} = \frac{1-\theta}{r_0} + \frac{\theta}{r_1}.
\]
Solving this system implies 
\[
r = \frac{2}{\theta} = \frac{u}{u-1},
    \qquad 
\theta = 2\Bigl(1 - \frac{1}{u}\Bigr).
\]
With these values, combining \eqref{eq:lr1-1-infty}, \eqref{eq:lr1-2-2}, and Lemma~\ref{interpolation-inequality} gives
\begin{align*}
   \|T\mathcal{G}\|_{\ell^{v}(\mathbb{P}^{1}(\mathbb{F}_q))}
   \le 
   \|T\mathcal{G}\|_{\ell^{\frac{u}{u-1}}(\mathbb{P}^{1}(\mathbb{F}_q))} 
   \le \sqrt{2}\, q^{\frac{\theta}{2}}\,
   \|\mathcal{G}\|_{\ell^{u}(\mathbb{H}_{1}(\mathbb{F}_q))} 
   =\sqrt{2}\,q^{1-\frac1u}\,
\|\mathcal{G}\|_{\ell^{u}(\mathbb{H}_1(\mathbb{F}_q))},
\end{align*}
where we have used Lemma~\ref{lem:embed} noting that $v \geq \frac{u}{u-1}$ in the first inequality and $\frac{\theta}{2}=1-\frac1u$ in the last equality. 
\end{proof}

\subsection{The case $A_1(u, v)\le \frac{1}{v}$}

\begin{lemma}\label{thm:green}
Let $1\le v\le u\le 2$. Then, for all
$F:\mathbb{H}_1(\mathbb{F}_q)\to\mathbb{C}$, 
\[
\|\mathcal{M}_{\mathbb{H}_1} F\|_{\ell^v(\mathbb{P}^1(\mathbb{F}_q))}
\le 2 \,q^{\frac{1}{v}}\,
\|F\|_{\ell^u(\mathbb{H}_1(\mathbb{F}_q))}.
\]
\end{lemma}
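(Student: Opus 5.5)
The plan is to reduce to the diagonal estimate at exponent $v$, using the embedding $\ell^v\subset\ell^u$ on the \emph{domain} side. Since $v\le u$, the first inequality of Lemma~\ref{lem:embed} does not apply directly to $F$, so I will use the reverse embedding on $\mathbb H_1(\mathbb F_q)$:
\[
\|F\|_{\ell^v(\mathbb H_1)}\le (q^3)^{\frac1v-\frac1u}\|F\|_{\ell^u(\mathbb H_1)}.
\]
This loses too much, namely $q^{3/v-3/u}$. So instead I keep the direction-side norm at $v$ and work through the planar reduction with the mixed function $G_{F,u}$, which lives on $\mathbb F_q^2$.

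\textbf{Step 1: pass to the plane.} By Lemma~\ref{lem:planar-domination} applied with exponent $u$, we have $\mathcal M_{\mathbb H_1}F\le \mathcal M_2G$ with $G=G_{F,u}$ and $\|G\|_{\ell^u(\mathbb F_q^2)}=\|F\|_{\ell^u}$. It therefore suffices to show
\[
\|\mathcal M_2 G\|_{\ell^v(\mathbb P^1)}\lesssim q^{1/v}\|G\|_{\ell^u(\mathbb F_q^2)}
\qquad\text{for } 1\le v\le u\le 2.
\]

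\textbf{Step 2: diagonal planar bound plus embedding on $\mathbb F_q^2$.} Theorem~\ref{thm:planar-all-u} at exponent $v\le2$ gives $\|\mathcal M_2G\|_{\ell^v}\le\sqrt2\, q^{1/v}\|G\|_{\ell^v(\mathbb F_q^2)}$. Converting $\|G\|_{\ell^v}$ to $\|G\|_{\ell^u}$ costs $q^{2(1/v-1/u)}$, which again is too lossy. I will therefore avoid converting at all and instead interpolate the linearized operator $T$ (one line per direction) directly between two bounds:
\begin{itemize}
\item the $(1,1)$ bound $\|Tf\|_{\ell^1}\le (q+1)\|f\|_{\ell^1}$;
\item the $(2,2)$ bound with constant $\sqrt2\, q^{1/2}$.
\end{itemize}
This yields only the diagonal. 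For the off-diagonal case $v<u$, I will use Hölder on $\mathbb P^1$ instead:
\[
\|TG\|_{\ell^v}\le (q+1)^{\frac1v-\frac1u}\|TG\|_{\ell^u}\le (q+1)^{\frac1v-\frac1u}\sqrt2\, q^{1/u}\|G\|_{\ell^u},
\]
by Theorem~\ref{thm:planar-all-u} at exponent $u\le2$. Since $(q+1)^{1/v-1/u}\le 2^{1/v-1/u}q^{1/v-1/u}$, the total power is $q^{1/v}$ and the constant is at most $\sqrt2\cdot 2^{1/2}=2$, because $1/v-1/u\le 1/2$ when $1\le v\le u\le2$. This matches the claimed constant $2$.

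\textbf{Main obstacle.} The only delicate point is choosing the embedding on the direction space $\mathbb P^1$, which has only $q+1$ points, rather than on the domain. This is what keeps the loss at $q^{1/v-1/u}$, exactly complementing the gain $q^{1/u}$ from the diagonal bound, and checking the constant bookkeeping is then routine.
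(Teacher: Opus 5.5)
Your final argument is correct and is essentially the paper's proof. Hölder on the $(q+1)$-point space $\mathbb P^1(\mathbb F_q)$ costs $(2q)^{\frac1v-\frac1u}$, the diagonal bound at exponent $u\le 2$ gives $\sqrt2\,q^{\frac1u}$, and $\frac1v-\frac1u\le\frac12$ yields the constant $2$. The only difference is cosmetic: you pass through $G_{F,u}$ and Theorem~\ref{thm:planar-all-u} explicitly, whereas the paper quotes Theorem~\ref{thm:diag}, which is exactly that composition. Your discarded detours through domain-side embeddings and $(1,1)$--$(2,2)$ interpolation are not needed.
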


\begin{proof}
Since $v\le u$, Lemma~\ref{lem:embed} gives
\[
\|\mathcal{M}_{\mathbb{H}_1} F\|_{\ell^v(\mathbb{P}^1(\mathbb{F}_q))}
\le
|\mathbb{P}^1(\mathbb{F}_q)|^{\frac1v-\frac1u}\,
\|\mathcal{M}_{\mathbb{H}_1} F\|_{\ell^u(\mathbb{P}^1(\mathbb{F}_q))}.
\]
Since $u\le 2$, Theorem~\ref{thm:diag} implies
\[
\|\mathcal{M}_{\mathbb{H}_1} F\|_{\ell^u(\mathbb{P}^1(\mathbb{F}_q))}
\le \sqrt{2} \,q^{\frac{1}{u}}\,
\|F\|_{\ell^u(\mathbb{H}_1(\mathbb{F}_q))}.
\]
Therefore,
\begin{align*}
    \|\mathcal{M}_{\mathbb{H}_1} F\|_{\ell^v(\mathbb{P}^1(\mathbb{F}_q))}
\le & \sqrt{2}\,(q+1)^{\frac1v-\frac1u}q^{\frac{1}{u}}\,\|F\|_{\ell^u(\mathbb{H}_1(\mathbb{F}_q))} \leq \sqrt{2}\,(2q)^{\frac1v-\frac1u}q^{\frac{1}{u}}\,\|F\|_{\ell^u(\mathbb{H}_1(\mathbb{F}_q))} \\
\le & 2\,q^{\frac{1}{v}}\,\|F\|_{\ell^u(\mathbb{H}_1(\mathbb{F}_q))},
\end{align*}
where the last inequality uses the fact that $0\leq \frac{1}{v}-\frac{1}{u} \leq \frac{1}{2}$ 
which follows from the assumption \(1 \le v \le u \le 2\).
This completes the proof.
\end{proof}

\begin{lemma}
\label{thm:lower-right-2}
Let $1\le u\le 2$ and $u\le v\le \frac{u}{u-1}$ (with the convention $\frac{u}{u-1}=\infty$ when $u=1$).  Then, for all
$F:\mathbb{H}_1(\mathbb{F}_q)\to\mathbb{C}$, 
\[
\|\mathcal{M}_{\mathbb{H}_1} F\|_{\ell^v(\mathbb{P}^1(\mathbb{F}_q))}
\le 2\sqrt{2}\,q^{\frac1v}\,
\|F\|_{\ell^u(\mathbb{H}_1(\mathbb{F}_q))}.
\]
\end{lemma}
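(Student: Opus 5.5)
The plan is to interpolate along the segment joining two estimates we already have: the diagonal bound at $(u,v)=(u,u)$ and the bound at $(u,v)=(u,\frac{u}{u-1})$ from Lemma~\ref{thm:lower-right-1}. Since $u\le v\le \frac{u}{u-1}$, the point $\frac1v$ lies between $\frac1{u'}=1-\frac1u$ and $\frac1u$. Interpolation will give exponent $\frac1v$ directly.

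As in the proof of Lemma~\ref{thm:lower-right-1}, we first reduce to $F\ge 0$. We then fix the maximizing family $\{L_{[\mathbf v]}\}$ and pass to the linear operator $T\mathcal G([\mathbf v])=\sum_{\mathbf p\in L_{[\mathbf v]}}\mathcal G(\mathbf p)$, so that $TF=\mathcal M_{\mathbb H_1}F$. The two endpoint estimates for $T$ are as follows.

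\begin{itemize}
\item At $(u,u)$: since $u\le 2$, Theorem~\ref{thm:diag} gives the bound with constant $\sqrt2\,q^{\frac1u}$. This applies to $T$ because that argument (via Lemma~\ref{lem:planar-domination} and Theorem~\ref{thm:planar-all-u}) works for any fixed family of lines and nonnegative inputs; for general complex $\mathcal G$, the bound $|T\mathcal G|\le T|\mathcal G|$ suffices.
\item At $(u,\frac{u}{u-1})$: the proof of Lemma~\ref{thm:lower-right-1} gives $\|T\mathcal G\|_{\ell^{u/(u-1)}}\le\sqrt2\,q^{1-\frac1u}\|\mathcal G\|_{\ell^u}$ for all $\mathcal G$.
\end{itemize}

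Both endpoints have the same input exponent $u$, and $T$ is linear, so Lemma~\ref{interpolation-inequality} applies with $p_0=p_1=u$, $r_0=u$, $r_1=\frac{u}{u-1}$. Choose $\theta$ with $\frac1v=\frac{1-\theta}{u}+\theta(1-\frac1u)$. The resulting constant is $\sqrt2\,q^{(1-\theta)\frac1u+\theta(1-\frac1u)}=\sqrt2\,q^{\frac1v}$, which is within the claimed $2\sqrt2\,q^{\frac1v}$.

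The degenerate cases need separate checks. For $u=2$ the two endpoints coincide, so the statement is just Theorem~\ref{thm:diag}. For $u=1$ we have $r_1=\infty$, and the bound is the trivial $\ell^1\to\ell^\infty$ estimate; the interpolation still holds with $\theta=1-\frac1v$.

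The only step I expect to need care is confirming that Theorem~\ref{thm:diag} holds for the linearized $T$ with complex inputs, which is what Lemma~\ref{interpolation-inequality} requires. If that were problematic, the fallback is a weaker but sufficient route. Lemma~\ref{lem:embed} gives $\|\mathcal M F\|_{\ell^v}\le\|\mathcal MF\|_{\ell^u}\le\sqrt2 q^{\frac1u}\|F\|_{\ell^u}$, but $\frac1u\ge\frac1v$, so this alone is too weak. Instead, apply the log-convexity of $\ell^r$ norms (Hölder) to the single function $g=\mathcal M F$:
\[
\|g\|_{\ell^v}\le\|g\|_{\ell^u}^{1-\theta}\,\|g\|_{\ell^{u/(u-1)}}^{\theta}.
\]
Feeding in the two endpoint bounds yields the same $\sqrt2\,q^{\frac1v}$ without any linearization.
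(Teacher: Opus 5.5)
Your argument is correct, but it follows a different route from the paper's. The paper linearizes and applies Lemma~\ref{interpolation-inequality} with genuinely different input exponents. It interpolates the trivial bound $\|T\mathcal G\|_{\ell^r}\le (q+1)^{1/r}\|\mathcal G\|_{\ell^1}$ against the $\ell^2\to\ell^2$ bound $\sqrt2\,q^{1/2}$, taking $\theta=2(1-\frac1u)$ and choosing the auxiliary exponent $r$ so that $\frac1v=\frac{1-\theta}{r}+\frac{\theta}{2}$. This produces the constant $(q+1)^{(1-\theta)/r}(\sqrt2)^{\theta}q^{\theta/2}\le 2\sqrt2\,q^{1/v}$.

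You instead keep the input exponent $u$ fixed and interpolate only on the output side. The two estimates you combine are already established: Theorem~\ref{thm:diag} at $(u,u)$ and Lemma~\ref{thm:lower-right-1} at $(u,\frac{u}{u-1})$. Because the input norm is the same at both ends, this is exactly H\"older's (log-convexity) inequality $\|g\|_{\ell^v}\le\|g\|_{\ell^u}^{1-\theta}\|g\|_{\ell^{u/(u-1)}}^{\theta}$ applied to $g=\mathcal M_{\mathbb H_1}F$. So what you call a ``weaker fallback'' is really the cleanest form of your argument, not a weaker one. It also makes your concern about applying Theorem~\ref{thm:diag} to the linearized operator with complex inputs unnecessary. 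That concern was harmless anyway, since $|T\mathcal G|\le\mathcal M_{\mathbb H_1}\mathcal G$ pointwise, as the paper itself uses.

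Your route is shorter, avoids Riesz--Thorin and linearization entirely, and gives the better constant $\sqrt2$. The paper's route goes back directly to the primitive ingredients (the trivial $\ell^1$ endpoints and the $\ell^2$ estimate) rather than to the intermediate results derived from them. Both ultimately rest on the same inputs.
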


\begin{proof}
It suffices to prove the claim for non-negative $F$. We take the family $\{L_{[\mathbf{v}]}\}_{[\mathbf{v}]}$ of horizontal lines and the linear operator $T$ as in the proof of Lemma \ref{thm:lower-right-1}.

We record two endpoint bounds for $T$. First, we deduce by \eqref{eq:lr1-1-infty} that 
\begin{equation}\label{eq:T_1_to_r}
\|T\mathcal{G}\|_{\ell^r(\mathbb{P}^1(\mathbb{F}_q))}\leq (q+1)^{\frac1r} \|T\mathcal{G}\|_{\ell^\infty( \mathbb{P}^1(\mathbb{F}_q))}
\le (q+1)^{\frac{1}{r}}\,\|\mathcal{G}\|_{\ell^1(\mathbb{H}_1(\mathbb{F}_q))}.
\end{equation}
Second, since $|T\mathcal{G}|\le \mathcal{M}_{\mathbb{H}_1}(|\mathcal{G}|)$ pointwise, Theorem~\ref{thm:diag} with $u=2$ gives
\begin{equation}\label{eq:T_2_to_2}
\|T\mathcal{G}\|_{\ell^2(\mathbb{P}^1(\mathbb{F}_q))}
\le \|\mathcal{M}_{\mathbb{H}_1}(|\mathcal{G}|)\|_{\ell^2(\mathbb{P}^1(\mathbb{F}_q))}
\le \sqrt{2}\, q^{\frac{1}{2}}\, \|\mathcal{G}\|_{\ell^2(\mathbb{H}_1(\mathbb{F}_q))}.
\end{equation}
If $u=2$, then $v=2$ and \eqref{eq:T_2_to_2} gives the desired estimate.

Assume now that $1\le u<2$. Set
\[
\theta := 2\Bigl(1-\frac1u\Bigr)\in[0,1),
\qquad\text{so that}\qquad
\frac1u=(1-\theta)\cdot 1 + \theta\cdot \frac12.
\]
Choose $r\in[1,\infty]$ so that
\begin{align}\label{definition-r-v}
    \frac1v = (1-\theta)\cdot \frac1r + \theta\cdot \frac12.
\end{align}
Such an $r$ exists because $u\le v\le \frac{u}{u-1}$ is equivalent to
\[
\frac1u \ge \frac1v \ge 1-\frac1u = \frac{\theta}{2},
\]
and hence $0\le \frac1r \le 1$.

Apply Lemma~\ref{interpolation-inequality} to the linear operator $T$ using \eqref{eq:T_1_to_r} and \eqref{eq:T_2_to_2}.
We obtain
\[
\|T\mathcal{G}\|_{\ell^v(\mathbb{P}^{1}(\mathbb{F}_q))}
\le
\bigl((q+1)^{\frac{1}{r}}\bigr)^{1-\theta}\,
\bigl(\sqrt{2}\,q^{\frac{1}{2}}\bigr)^{\theta}\,
\|\mathcal{G}\|_{\ell^u(\mathbb{H}_1(\mathbb{F}_q))} \leq 2\sqrt{2}\, q^{\frac{1}{v}}\, \|\mathcal{G}\|_{\ell^u(\mathbb{H}_1(\mathbb{F}_q))}.
\]
where we have used $q+1\leq 2q$ and
\eqref{definition-r-v} in the last inequality. Now the lemma follows by taking $\mathcal G=F$.
\end{proof}

\section{Lower bounds (Theorem \ref{thm:exact_A})}\label{section5}

\subsection{The case $A_1(u, v)\ge 1+\frac{1}{v}-\frac{2}{u}$}

\begin{lemma}
\label{lem:lb-fan}
For every $1\le u,v\le\infty$, one has $A_1(u, v)\ge 1+\frac1v-\frac{2}{u}$.
\end{lemma}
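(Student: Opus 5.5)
The lower bound will come from testing against the constant function $F\equiv \mathbf 1_{\mathbb H_1(\mathbb F_q)}$, which the paper's naming (``fan'') suggests should be the extremal example for the term $1+\frac1v-\frac2u$. This lower bound holds for all $(u,v)$, so it suffices to exhibit a single family of test functions $F_q$ with
\[
\|\mathcal M_{\mathbb H_1}F_q\|_{\ell^v(\mathbb P^1(\mathbb F_q))}\gtrsim q^{1+\frac1v-\frac2u}\|F_q\|_{\ell^u(\mathbb H_1(\mathbb F_q))}.
\]
If $A_1(u,v)<1+\frac1v-\frac2u$, then \eqref{eq:An_def} would give $q^{1+\frac1v-\frac2u}\lesssim C q^{A_1(u,v)}$ uniformly in $q$. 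Letting $q\to\infty$ gives a contradiction.

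For $F\equiv 1$, every horizontal line has exactly $q$ points, so $\mathcal M_{\mathbb H_1}F([\mathbf v])=q$ for each of the $q+1$ directions. Hence
\[
\|\mathcal M_{\mathbb H_1}F\|_{\ell^v}=q(q+1)^{1/v}\ge q^{1+\frac1v}.
\]
On the other side, $\|F\|_{\ell^u}=(q^3)^{1/u}=q^{3/u}$. The ratio is therefore $q^{1+\frac1v-\frac3u}$, which falls short of the target by a factor $q^{1/u}$.

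So the constant function alone is not enough, and I would instead use a set that is one-dimensional thinner while still containing full lines in every direction. I would take $F=\mathbf 1_{E}$ with $E=\{(x,y,0): x,y\in\mathbb F_q\}$, the horizontal plane $t=0$, so $|E|=q^2$. The key check is that $E$ contains a horizontal line in every direction. The line through $\mathbf p=(0,0,0)$ in direction $(a,b)$ is $\{(sa,sb,0)\}$, because the twist $s(x_0b-y_0a)$ vanishes at the origin. This is the ``fan'' of $q+1$ lines through the origin, all lying in $E$.

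This gives $\mathcal M_{\mathbb H_1}\mathbf 1_E([\mathbf v])=q$ for every $[\mathbf v]$, so
\[
\|\mathcal M_{\mathbb H_1}\mathbf 1_E\|_{\ell^v}\ge q^{1+\frac1v}, \qquad \|\mathbf 1_E\|_{\ell^u}=q^{2/u}.
\]
The ratio is exactly $q^{1+\frac1v-\frac2u}$, as required. Equivalently, the fan $\bigcup_{[\mathbf v]}L_{\mathbf 0,[\mathbf v]}$ itself has about $q^2$ points, so $\mathbf 1$ of the fan works just as well. The endpoint cases $u=\infty$ or $v=\infty$ follow by reading $1/\infty=0$.

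The only genuine obstacle is noticing that the constant function loses a factor $q^{1/u}$, and that one must instead use a set of size about $q^2$ containing lines in all directions. Verifying that the fan through the origin is horizontal, i.e.\ that the Heisenberg twist vanishes there, is routine.
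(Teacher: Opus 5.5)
Your proof is correct and is essentially the paper's argument: the fan $\bigcup_{[\mathbf v]}L_{\mathbf 0,[\mathbf v]}$ the paper uses is exactly your plane $\{t=0\}$, since it lies in that plane and has $q^2$ points. In both cases the estimate is $\|\mathcal M_{\mathbb H_1}F\|_{\ell^v}\ge q(q+1)^{1/v}$ against $\|F\|_{\ell^u}=q^{2/u}$. The detour through the constant function is unnecessary but harmless.
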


\begin{proof}
Let
\[
S:=\bigcup_{[\mathbf{v}]\in\mathbb{P}^1(\mathbb{F}_q)} L_{\mathbf{0}, [\mathbf{v}]},
\qquad
F:=\mathbf{1}_S.
\]
Distinct horizontal lines through $\mathbf{0}$ intersect only at $\mathbf{0}$.
Since each such line has $q$ points and there are $q+1$ directions, we obtain
\[
|S|=1+(q+1)(q-1)=q^2.
\]
Therefore, $\|F\|_{\ell^u(\mathbb{H}_1(\mathbb{F}_q))}=|S|^{\frac{1}{u}}=q^{\frac{2}{u}}$.

For every $[\mathbf{v}]\in\mathbb{P}^1(\mathbb{F}_q)$, the line $L_{\mathbf{0}, [\bfv]}$ is contained in $S$, hence
\[
\mathcal{M}_{\mathbb{H}_1} F([\mathbf{v}])\ge \sum_{\mathbf x\in L_{\mathbf{0}, [\mathbf{v}]}}\mathbf{1}_S(\mathbf x)=q.
\]
Consequently,
\[
\|\mathcal{M}_{\mathbb{H}_1} F\|_{\ell^v(\mathbb{P}^1(\mathbb{F}_q))}
\ge
\Bigl(\sum_{[\mathbf{v}]\in\mathbb{P}^1(\mathbb{F}_q)} q^v\Bigr)^{\frac{1}{v}}
=
q\,(q+1)^{\frac{1}{v}}
\ge  q^{1+\frac{1}{v}}.
\]
Therefore, $q^{A_1(u, v)}\gtrsim q^{1+\frac{1}{v}}/q^{\frac{2}{u}}$, and hence $A_1(u, v)\ge 1+\frac1v-\frac{2}{u}$.
\end{proof}

\subsection{The case $A_1(u, v)\ge 1-\frac{1}{u}$}

\begin{lemma}
\label{lem:lb-line}
For every $1\le u,v\le\infty$, one has $A_1(u, v)\ge 1-\frac1u$.
\end{lemma}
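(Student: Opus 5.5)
We take the indicator of a single horizontal line as the test function, mirroring the previous lemma. Fix one horizontal line, say $L_0:=L_{\mathbf 0,[\mathbf v_0]}$ with $\mathbf v_0=(1,0)$, and set $F:=\mathbf 1_{L_0}$. Since $|L_0|=q$, we have
\[
\|F\|_{\ell^u(\mathbb H_1(\mathbb F_q))}=q^{\frac1u}
\qquad(1\le u\le\infty),
\]
with the convention that this equals $1$ when $u=\infty$.

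We then bound the maximal function from below at the single direction $[\mathbf v_0]$. Taking $\mathbf p=\mathbf 0$ in the definition of $\mathcal M_{\mathbb H_1}$ gives
\[
\mathcal M_{\mathbb H_1}F([\mathbf v_0])\ \ge\ \sum_{\mathbf x\in L_0}\mathbf 1_{L_0}(\mathbf x)=q.
\]
Because all values of $\mathcal M_{\mathbb H_1}F$ are nonnegative, we may discard the other directions. This yields, for every $1\le v\le\infty$,
\[
\|\mathcal M_{\mathbb H_1}F\|_{\ell^v(\mathbb P^1(\mathbb F_q))}\ \ge\ \mathcal M_{\mathbb H_1}F([\mathbf v_0])\ \ge\ q.
\]

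Inserting both computations into the defining inequality \eqref{eq:An_def} with $n=1$ gives
\[
q\le C_{1,u,v}\,q^{A_1(u,v)}\,q^{\frac1u}
\]
for all odd prime powers $q$. Letting $q\to\infty$ then forces $A_1(u,v)\ge 1-\frac1u$.

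We expect no step to be a real obstacle. The only points requiring care are that the lower bound on the $\ell^v$ norm holds uniformly in $v$, including $v=\infty$, and that the endpoint $u=\infty$ is treated consistently. Both follow directly from the computations above.
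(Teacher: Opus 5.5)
Your proposal is correct and follows essentially the same approach as the paper: test on $F=\mathbf 1_L$ for a single horizontal line $L$, use $\|F\|_{\ell^u}=q^{1/u}$, and bound $\|\mathcal M_{\mathbb H_1}F\|_{\ell^v}\ge \mathcal M_{\mathbb H_1}F([\mathbf v_0])\ge q$ at the direction of $L$. Letting $q\to\infty$ in the defining inequality then gives $A_1(u,v)\ge 1-\frac1u$. Your explicit treatment of the endpoints $u=\infty$ and $v=\infty$ is a harmless addition.
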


\begin{proof}
Let $L\subset\mathbb{H}_1(\mathbb{F}_q)$ be a fixed horizontal line, and set $F=\mathbf{1}_L$.
Then $\|F\|_{\ell^u(\mathbb{H}_1(\mathbb{F}_q))}=|L|^{\frac{1}{u}}=q^{\frac{1}{u}}$.
Let $[\mathbf{v}_0]$ be the direction of $L$. 
Then $\mathcal{M}_{\mathbb{H}_1} F([\mathbf v_0])\ge \sum\limits_{\mathbf x\in L}\mathbf{1}_L(\mathbf x)=q$.
Therefore, $\|\mathcal{M}_{\mathbb{H}_1} F\|_{\ell^v(\mathbb{P}^1(\mathbb{F}_q))}\ge q$ for every $v$.
Thus $q^{A_1(u, v)}\gtrsim q/q^{\frac{1}{u}}$, and hence $A_1(u, v)\ge 1-\frac{1}{u}$.
\end{proof}

\subsection{The case $A_1(u, v)\ge \frac{1}{v}$}

\begin{lemma}
\label{lem:lb-point}
For every $1\le u,v\le\infty$ one has $A_1(u, v)\ge \frac1v$.
\end{lemma}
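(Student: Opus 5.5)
The natural test function is a point mass. I would fix a point, for concreteness $\mathbf{0}=(0,0,0)\in\mathbb{H}_1(\mathbb{F}_q)$, and set $F=\mathbf{1}_{\{\mathbf{0}\}}$. Then $\|F\|_{\ell^u(\mathbb{H}_1(\mathbb{F}_q))}=1$ for every $1\le u\le\infty$, so the whole argument reduces to showing that $\mathcal{M}_{\mathbb{H}_1}F$ is large on many directions.

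The key step is the lower bound $\mathcal{M}_{\mathbb{H}_1}F([\mathbf{v}])\ge 1$ for every $[\mathbf{v}]\in\mathbb{P}^1(\mathbb{F}_q)$. This holds because the horizontal line $L_{\mathbf{0},[\mathbf{v}]}$ passes through $\mathbf{0}$, taking $s=0$ in the parametrization \eqref{eq:line-param-main}. Hence
\[
\sum_{\mathbf{x}\in L_{\mathbf{0},[\mathbf{v}]}}F(\mathbf{x})\ge 1 .
\]
Equivalently, $|\mathcal{L}(\mathbf{0})|=q+1$ and every direction occurs among these lines, as recorded in Section~\ref{section2}.

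From this I get
\[
\|\mathcal{M}_{\mathbb{H}_1}F\|_{\ell^v(\mathbb{P}^1(\mathbb{F}_q))}\ge (q+1)^{\frac1v}\ge q^{\frac1v}.
\]
For $v=\infty$ the left side is at least $1=q^0$. Comparing with the definition \eqref{eq:An_def} gives $C_{1,u,v}\,q^{A_1(u,v)}\ge q^{\frac1v}$ for all $q$. Letting $q\to\infty$ along odd prime powers then forces $A_1(u,v)\ge \frac1v$.

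There is no real obstacle here. The only point needing care is the remark that the constant $C_{1,u,v}$ is independent of $q$, so the growth comparison is legitimate; the argument has exactly the same shape as the one in Lemma~\ref{lem:lb-line}.
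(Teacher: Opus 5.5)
Your proposal is correct and follows the paper's own argument: a point mass $F=\delta_{\mathbf p_\ast}$ has $\|F\|_{\ell^u}=1$, every direction $[\mathbf v]$ has a horizontal line through the point, so $\|\mathcal M_{\mathbb H_1}F\|_{\ell^v}\ge (q+1)^{\frac1v}$, and this forces $A_1(u,v)\ge \frac1v$. The only difference is that the paper records the exact value $\mathcal M_{\mathbb H_1}F\equiv 1$, whereas you use only the lower bound $\ge 1$, which is all that is needed.
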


\begin{proof}
Fix a point $\bfp_\ast\in\mathbb{H}_1(\mathbb{F}_q)$ and set $F=\delta_{\bfp_\ast}$ where $\delta_{\bfp_\ast} :\mathbb{H}_1(\mathbb{F}_q) \to \{0,1\}$ is defined by 
\begin{equation*}
\delta_{\mathbf p_\ast} (\mathbf x)=
    \begin{cases}
        1, ~~~~~~~~\mathbf x=\bfp_\ast,\\
        0, ~~~~~~~~\mathbf x \ne \bfp_\ast.
    \end{cases}
\end{equation*}
For every direction $[\mathbf{v}]\in\mathbb{P}^1(\mathbb{F}_q)$ there is a unique horizontal line in direction $[\mathbf{v}]$ through $\mathbf p_\ast$.
Therefore, $\mathcal{M}_{\mathbb{H}_1} F([\mathbf{v}])=1$ for all $[\mathbf{v}]$.
Hence
\[
\|\mathcal{M}_{\mathbb{H}_1} F\|_{\ell^v(\mathbb{P}^1(\mathbb{F}_q))}=(q+1)^{\frac{1}{v}},
\qquad
\|F\|_{\ell^u(\mathbb{H}_1(\mathbb{F}_q))}=1.
\]
Thus, any admissible exponent in \eqref{eq:An_def} must satisfy $q^{A_1(u, v)}\gtrsim (q+1)^{\frac{1}{v}}$, and hence $A_1(u, v)\ge \frac{1}{v}$.

\end{proof}

\section{Proof of Theorem \ref{thm:exact_A} -- $\mathbb{H}_1(\mathbb{F}_q)$}\label{section6}

\begin{proof}[Proof of Theorem~\ref{thm:exact_A}]
It follows from Lemmas~\ref{lem:lb-point}, \ref{lem:lb-line}, and \ref{lem:lb-fan} that
\[
A_1(u, v)\ge \max\Bigg\{\frac1v,\ 1-\frac1u,\ 1+\frac1v-\frac{2}{u}\Bigg\}.
\]
It remains to prove the matching upper bound. We distinguish four regions.

\medskip
\noindent\textbf{(i) The region: $1\le v\le u\le 2$.}
By Lemma~\ref{thm:green} one has $A_1(u, v)\le \frac{1}{v}$.
In this region $\frac1u\ge \frac12$ and $\frac1v\ge \frac1u$, hence $1+\frac1v-\frac{2}{u}\le \frac1v$ and $1-\frac1u\le \frac1v$.
Therefore, $\max \big\{\frac1v,1-\frac1u,1+\frac1v-\frac{2}{u} \big\}=\frac1v$, so the upper bound is sharp.

\medskip
\noindent\textbf{(ii) The region: $2\le u\le v\le \infty$.}
By Lemma~\ref{thm:blue}, one has $A_1(u, v)\le 1-\frac1u$.
In this region $\frac1u\le \frac12$ and $\frac1v\le \frac1u$, hence $1+\frac1v-\frac{2}{u}\le 1-\frac1u$ and $\frac1v\le 1-\frac1u$.
Therefore, the maximum equals $1-\frac1u$, so the upper bound is sharp.

\medskip
\noindent\textbf{(iii) The region: $2\le u\le\infty$ and $1\le v\le u$.}
By Lemma~\ref{thm:upper-left}, one has $A_1(u, v)\le 1+\frac1v-\frac{2}{u}$.
Since $\frac1u\le \frac12$, we have $1+\frac1v-\frac{2}{u}\ge \frac1v$ and $1+\frac1v-\frac{2}{u}\ge 1-\frac1u$.
Therefore, the maximum equals $1+\frac1v-\frac{2}{u}$, so the upper bound is sharp.

\medskip
\noindent\textbf{(iv) The region: $1\le u\le 2$ and $u\le v\le \infty$.} We have $A_1(u, v)\le \max \big\{\frac1v,1-\frac1u \big\}$. Indeed, if $v\ge \frac{u}{u-1}$, apply Lemma~\ref{thm:lower-right-1}. If $u\le v\le \frac{u}{u-1}$, apply Lemma~\ref{thm:lower-right-2}.

In this region $\frac1u\ge \frac12$ and $\frac1v\le \frac1u$, hence $1+\frac1v-\frac{2}{u}\le 1-\frac1u$.
Therefore, $\max\big\{\frac1v,1-\frac1u,1+\frac1v-\frac{2}{u}\big\}=\max\big\{\frac1v,1-\frac1u \big\}$, so the upper bound matches the claimed maximum.

\medskip
Since these four regions cover all $1\le u,v\le\infty$, we conclude that
\[
A_1(u, v)= \max\Bigg\{\frac1v,\ 1-\frac1u,\ 1+\frac1v-\frac{2}{u}\Bigg\}.
\]
This completes the proof.
\end{proof}

\section{Proof of Theorem \ref{thm:exact_Hn} -- $\mathbb{H}_n(\mathbb{F}_q)$}\label{section7}

The argument is the same as that of $\mathbb{H}_1(\mathbb{F}_q)$. The proof for $\mathbb{H}_1(\mathbb{F}_q)$ is self-contained. In higher dimensions, the only new input is the diagonal estimate in $\mathbb{F}_q^{2n}$ due to Ellenberg, Oberlin, and Tao in \cite{EOT}, which is an extension of Lemma \ref{lem:planar-l2}. For completeness, we provide a sketch in this section.

Note that 
\begin{equation}\label{size-P^{2n-1}}
   \bigl|\mathbb{P}^{2n-1}(\mathbb{F}_q)\bigr|=\frac{q^{2n}-1}{q-1},
\end{equation}
and hence
\begin{equation}\label{eq:Psize}
q^{2n-1}\le \bigl|\mathbb{P}^{2n-1}(\mathbb{F}_q)\bigr|\le 2q^{2n-1}.
\end{equation}

We begin by extending the projection and domination principle of Lemma 
\ref{lem:planar-domination} to the higher-dimensional setting.

\begin{lemma}\label{lem:Hn_domination}
Let $1\le u\le \infty$ and let $F:\mathbb{H}_n(\mathbb{F}_q)\to\mathbb{C}$.
Define
\[
G_{F,u}(\mathbf x,\mathbf y):=\Bigl(\sum_{t\in\mathbb{F}_q}|F(\mathbf x, \mathbf y,t)|^u\Bigr)^{\frac{1}{u}}
\quad\text{for }1\le u<\infty,
\qquad
G_{F,\infty}(\mathbf x,\mathbf y):=\max_{t\in\mathbb{F}_q}|F(\mathbf x, \mathbf y,t)|
\quad\text{for }u=\infty.
\]
Then
\[
\|G_{F,u}\|_{\ell^u(\mathbb{F}_q^{2n})}=\|F\|_{\ell^u(\mathbb{H}_n(\mathbb{F}_q))}
\]
and
\[
\mathcal{M}_{\mathbb{H}_n}F([\mathbf{v}])\le \mathcal{M}_{2n}G_{F,u}([\mathbf{v}])
\quad\text{for all }[\mathbf{v}]\in \mathbb{P}^{2n-1}(\mathbb{F}_q),
\]
where $\mathcal{M}_{2n}$ denotes the standard Kakeya maximal operator on $\mathbb{F}_q^{2n}$.
\end{lemma}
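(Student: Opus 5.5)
The plan is to copy the proof of Lemma~\ref{lem:planar-domination} word for word, with $\mathbb{F}_q^2$ replaced by $\mathbb{F}_q^{2n}$ and scalar coordinates replaced by vectors. I treat $1\le u<\infty$ in detail; the case $u=\infty$ is identical once $\ell^u$ sums over $t$ are replaced by maxima.

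\textbf{Norm identity.} Write $G:=G_{F,u}$. The norm identity follows by expanding both sides:
\[
\|G\|_{\ell^u(\mathbb F_q^{2n})}^u=\sum_{(\mathbf x,\mathbf y)\in\mathbb F_q^{2n}}\sum_{t\in\mathbb F_q}|F(\mathbf x,\mathbf y,t)|^u=\|F\|_{\ell^u(\mathbb H_n(\mathbb F_q))}^u .
\]
For $u=\infty$, the maximum over $(\mathbf x,\mathbf y)$ of the maximum over $t$ is the maximum over $\mathbb H_n(\mathbb F_q)$.

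\textbf{Pointwise domination.}
\begin{enumerate}
\item Fix $[\mathbf v]\in\mathbb P^{2n-1}(\mathbb F_q)$ with representative $(\mathbf a,\mathbf b)\neq(\mathbf 0,\mathbf 0)$, and fix a basepoint $\mathbf p=(\mathbf x_0,\mathbf y_0,t_0)$.
\item By the parametrization \eqref{eq:line-param-main}, the projection $\pi(\mathbf x,\mathbf y,t)=(\mathbf x,\mathbf y)$ maps $L_{\mathbf p,[\mathbf v]}$ onto the affine line $\ell=\{(\mathbf x_0+s\mathbf a,\mathbf y_0+s\mathbf b):s\in\mathbb F_q\}\subset\mathbb F_q^{2n}$, which has direction $[\mathbf v]$.
\item Since $(\mathbf a,\mathbf b)\neq 0$, the map $s\mapsto(\mathbf x_0+s\mathbf a,\mathbf y_0+s\mathbf b)$ is injective. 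Hence $\pi|_L$ is a bijection onto $\ell$: each point of $\ell$ has exactly one lift $t(\mathbf x,\mathbf y)$ with $(\mathbf x,\mathbf y,t(\mathbf x,\mathbf y))\in L$.
\item For each point of $\ell$, bound $|F(\mathbf x,\mathbf y,t(\mathbf x,\mathbf y))|\le G(\mathbf x,\mathbf y)$; a single term is at most the $\ell^u$ norm of the whole fiber.
\item Summing over $\ell$ gives $\sum_{L}|F|\le\sum_{\ell}G\le\mathcal M_{2n}G([\mathbf v])$.
\item Taking the maximum over $\mathbf p$ yields $\mathcal M_{\mathbb H_n}F([\mathbf v])\le\mathcal M_{2n}G([\mathbf v])$.
\end{enumerate}

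There is no real obstacle here. The only point needing care is step~3, the bijectivity of the projection on each horizontal line, which relies on the direction vector being nonzero. The Heisenberg twist $s(\mathbf x_0\cdot\mathbf b-\mathbf y_0\cdot\mathbf a)$ plays no role, since it only determines which single $t$-value lies above each point of $\ell$.
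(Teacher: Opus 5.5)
Your proposal is correct and matches the paper's proof essentially verbatim. Both arguments obtain the norm identity by expanding both sides. For the domination, both project the horizontal line bijectively onto an affine line of direction $[\mathbf v]$ in $\mathbb{F}_q^{2n}$, bound each term by $G_{F,u}$, sum over the line, and take the maximum over basepoints.
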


\begin{proof}
The identity of norms follows by expanding both sides.
For the domination, fix $[\mathbf{v}]$ and choose a representative $\mathbf{v}=(\mathbf{a}, \mathbf{b})\ne 0$.
Let $\mathbf{p}=(\mathbf x_0, \mathbf y_0,t_0)\in\mathbb{H}_n(\mathbb{F}_q)$ and set $L=L_{\mathbf{p}, [\mathbf{v}]}$.
The projection $\pi(\mathbf x, \mathbf y,t)=(\mathbf x, \mathbf y)$ maps $L$ bijectively to the affine line
\[
\ell=\{(\mathbf x_0, \mathbf y_0)+s(\mathbf a, \mathbf b): s\in\mathbb{F}_q\}\subset \mathbb{F}_q^{2n}.
\]
Therefore,
\[
\sum_{\mathbf x\in L}|F(\mathbf x)|
=
\sum_{(\mathbf x, \mathbf y)\in \ell}|F(\mathbf x, \mathbf y,t(\mathbf x, \mathbf y))|
\le
\sum_{(\mathbf x, \mathbf y)\in \ell} G_{F,u}(\mathbf x, \mathbf y),
\]
where $t(\mathbf x, \mathbf y)$ is the unique $t$ such that $(\mathbf x, \mathbf y,t)\in L$.
Taking the maximum over $\bfp$ gives $\mathcal{M}_{\mathbb{H}_n}F([\mathbf{v}])\le \mathcal{M}_{2n}G_{F,u}([\mathbf{v}])$.
\end{proof}
With the domination lemma in hand, it remains to bound $\mathcal{M}_{2n}G_{F,u}$ in 
$\ell^u(\mathbb{P}^{2n-1}(\mathbb{F}_q))$. This requires a sharp diagonal estimate 
for the standard Kakeya maximal operator on $\mathbb{F}_q^{2n}$, which is provided 
by the following result of Ellenberg, Oberlin, and Tao in \cite{EOT}.
\begin{theorem}[Theorem 1.3, \cite{EOT}]\label{thm:EOT_lines}
Let $d\ge 1$ and let $f:\mathbb{F}_q^d\to\mathbb{R}$.
Let $\mathcal{M}_{d}$ be the Kakeya maximal operator from $\mathbb{F}_q^d$ into $\mathbb{R}$ defined by
\[
\mathcal{M}_{d} f(\omega)
=
\max_{\ell\parallel\omega}\ \sum_{\mathbf z\in \ell} |f(\mathbf z)|
\qquad
(\omega\in \mathbb{P}^{d-1}(\mathbb{F}_q)),
\]
where the maximum is over all affine lines $\ell\subset\mathbb{F}_q^d$ with direction $\omega$.
Then
\[
\|\mathcal{M}_d f\|_{\ell^d(\mathbb{P}^{d-1}(\mathbb{F}_q))}
\le
C_d\, q^{\frac{d-1}{d}}\,\|f\|_{\ell^d(\mathbb{F}_q^d)},
\]
where $C_d$ depends only on $d$.
\end{theorem}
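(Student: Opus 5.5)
Since Theorem~\ref{thm:EOT_lines} is quoted from \cite{EOT}, the plan below is only a sketch of how I would reprove it by the polynomial method; two steps in it are unverified. As in Lemma~\ref{lem:planar-l2}, I would first assume $f\ge 0$ and linearize. For each $\omega\in\mathbb P^{d-1}(\mathbb F_q)$, fix a line $\ell_\omega\parallel\omega$ attaining the maximum, and prove the bound for the operator $Tf(\omega)=\sum_{\mathbf z\in\ell_\omega}f(\mathbf z)$, uniformly over all such line families.

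Next I would reduce to a restricted statement about sets. Let $E\subset\mathbb F_q^d$, let $1\le\lambda\le q$, and let $\Omega$ be a set of directions with $|E\cap\ell_\omega|\ge\lambda$ for every $\omega\in\Omega$. The target is the Furstenberg-type bound
\[
|E|\ \gtrsim_d\ \frac{\lambda^d\,|\Omega|}{q^{d-1}},
\]
which is the restricted weak-type form of the $\ell^d\to\ell^d$ estimate. In the intended application (Theorem~\ref{thm:exact_Hn}) a $\log q$ loss would already matter. So the upgrade to the strong-type inequality would be done with the multiplicity version described below. That version tolerates weights $f=\sum_j 2^j\mathbf 1_{E_j}$ by prescribing vanishing orders proportional to the weight at each point. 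I am not sure it yields the strong-type constant directly; if not, I would try a tensor-power argument on the product field $\mathbb F_{q}^{d}\times\cdots$ to remove the loss.

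For the set estimate I would use Dvir's argument with multiplicities, following Dvir--Kopparty--Saraf--Sudan as adapted in \cite{EOT}. Fix an integer $m$ and suppose $|E|\,\binom{m+d-1}{d}<\binom{D+d}{d}$. By linear algebra there is a nonzero polynomial $P$ of degree at most $D\approx c_d\,m\,|E|^{1/d}$ vanishing to order $m$ at every point of $E$. On each line $\ell_\omega$ with $\omega\in\Omega$, the restriction of $P$ has at least $\lambda m$ zeros counted with multiplicity. Hence if $D<\lambda m$, then $P$ vanishes identically on $\ell_\omega$. Applying the same reasoning to the Hasse derivatives of $P$ of order less than $m'$ (for suitable $m'<m$), the top homogeneous part $P_D$ vanishes to order about $m'$ at the direction $\omega$, that is, along the cone $\{s\mathbf v\}$ over $\omega$. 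The multiplicity Schwartz--Zippel lemma then applies to these roughly $q|\Omega|$ points, each of multiplicity about $m'$, and gives
\[
q\,|\Omega|\,m'\ \lesssim\ D\,q^{d-1}.
\]
Choosing $m$ large, $m'\approx m/2$, and $D\approx\lambda m/2$ gives $|\Omega|\lesssim\lambda\,q^{d-2}$ whenever $|E|\lesssim\lambda^d$. This is not yet the form I want.

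The main obstacle is to make the count interpolate correctly between arbitrary $\Omega$ and the $q^{d-1}$ normalization. I would first try two devices. One is to choose the multiplicity on the directional side proportional to $q/\lambda$, exploiting that each line carries $\lambda$ points of $E$ out of $q$. The other is to average the construction over random affine changes of variables, so that each line $\ell_\omega$ contributes more vanishing to $P_D$, scaled by $q/\lambda$, on the cone. Balancing $D^d\approx m^d|E|$ against $D\approx\lambda m$ and against the Schwartz--Zippel count $\approx|\Omega|m\,q/(q^{d-1})$ should produce exactly $|E|\gtrsim\lambda^d|\Omega|/q^{d-1}$. The degree bookkeeping in this balancing step is where I expect the real difficulty to lie.
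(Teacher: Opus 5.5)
The paper gives no proof of this statement: it is imported from \cite{EOT} and used as a black box in Lemma~\ref{lem:Hn_diagonal}. Your sketch therefore has to stand on its own, and it does not. Both steps you flag as unverified are genuine gaps, and the devices you propose do not close them.

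The first gap is the dependence on $|\Omega|$. Your multiplicity computation is correct, but multiplicities cannot change how the bound depends on $|\Omega|$. The order $m'$ to which $P_D$ vanishes at $\omega$ is dictated by the inequalities $\lambda(m-j)>D-j$, so it cannot be chosen ``proportional to $q/\lambda$''. The order this argument certifies is at most $m$, while the dimension count forces $D\gtrsim m|E|^{1/d}$. Hence multiplicity Schwartz--Zippel only ever yields $|\Omega|\lesssim |E|^{1/d}q^{d-2}$ when $|E|\ll\lambda^d$, that is, $|E|\gtrsim_d\min\{\lambda^d,(|\Omega|/q^{d-2})^d\}$. For small $|\Omega|$ this is much weaker than $\lambda^d|\Omega|/q^{d-1}$: for $d=2$, $\lambda=q$, $|\Omega|=k$ it gives $k^2$ instead of $kq$. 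No balancing of $m,m',D$ changes this.

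The missing idea is to average the \emph{set}, not the polynomial. Choose $k\approx|\mathbb{P}^{d-1}(\mathbb{F}_q)|/|\Omega|$ elements $g_1,\dots,g_k\in\mathrm{GL}_d(\mathbb{F}_q)$ such that $\bigcup_i g_i\Omega$ covers a fixed proportion of $\mathbb{P}^{d-1}(\mathbb{F}_q)$; random choices work because $\mathrm{GL}_d(\mathbb{F}_q)$ acts transitively on directions. Each $g_i\ell_\omega$ is a line in direction $g_i\omega$ meeting $\widetilde E:=\bigcup_i g_iE$ in at least $\lambda$ points. Plain Dvir with $D\approx\min\{\lambda,q/2\}$ then gives $|\widetilde E|\gtrsim_d\lambda^d$, so $|E|\ge|\widetilde E|/k\gtrsim_d\lambda^d|\Omega|/q^{d-1}$.

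The second gap is more serious: passing from this restricted weak-type bound to the strong-type $\ell^d$ inequality with $C_d$ independent of $q$. That lossless strong-type bound is what the statement asserts and what Lemma~\ref{lem:Hn_diagonal} needs. Vanishing orders proportional to an integer-valued $f$ give a polynomial of degree $D\approx C_d\|f\|_{\ell^d}$, so the argument only sees directions where $\mathcal{M}_d f$ exceeds the single threshold $D$. Combined with the averaging above, it yields at best $\lambda^d\,|\{\omega:\mathcal{M}_d f(\omega)\ge\lambda\}|\lesssim q^{d-1}\|f\|_{\ell^d}^d$ for each $\lambda$. Summing over the $\sim\log q$ relevant dyadic values of $\lambda$ loses exactly the logarithm you want to avoid.

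Interpolation cannot repair this. At $(1/u,1/v)=(1/d,1/d)$ the three lower-bound exponents $\frac{d-1}{v}$, $1-\frac1u$, $1+\frac{d-1}{v}-\frac du$ (point mass, single line, bush of lines through a point) coincide, and this point is a vertex of their maximum. So interpolating any valid estimates at other exponents produces a power of $q$ strictly larger than $\frac{d-1}{d}$.

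A tensor-power argument on $\mathbb{F}_q^d\times\mathbb{F}_q^d$ also fails as stated, because the operator does not tensorize. Over a line $\{(\mathbf{x}_1+s\mathbf{v}_1,\mathbf{x}_2+s\mathbf{v}_2)\}$ of $\mathbb{F}_q^{2d}$, the sum of $f\otimes f$ is $\sum_s f(\mathbf{x}_1+s\mathbf{v}_1)f(\mathbf{x}_2+s\mathbf{v}_2)$, not a product of two line sums. Passing to $\mathbb{F}_q^{kd}$ also changes the dimension and hence the constant.

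Even after the fix above, your route gives the Furstenberg bound and the $\ell^d$ estimate only up to a factor $\log q$. The lossless estimate requires a further argument that your proposal does not supply; for it one has to go to \cite{EOT}.
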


\begin{lemma}\label{lem:Hn_diagonal}
Let $n\ge 1$.
For every $1\le u\le \infty$ and every $F:\mathbb{H}_n(\mathbb{F}_q)\to\mathbb{C}$ one has
\[
\|\mathcal{M}_{\mathbb{H}_n}F\|_{\ell^u(\mathbb{P}^{2n-1}(\mathbb{F}_q))}
\le
C_{n,u}\, q^{\tau_{2n}(u)}\, \|F\|_{\ell^u(\mathbb{H}_n(\mathbb{F}_q))},
\]
where
\[
\tau_{2n}(u)
=
\begin{cases}
\frac{2n-1}{u}, & 1\le u\le 2n,\\[4pt]
1-\frac1u, & 2n\le u\le \infty,
\end{cases}
\]
and
\[
C_{n,u}:=
\begin{cases}
2^{\,1-\theta(u)}\,C_{2n}^{\,\theta(u)}, & 1\le u\le 2n,\\[0.4em]
C_{2n}^{\,\frac{2n}{u}}, & 2n\le u\le \infty,
\end{cases}
\qquad
\theta(u):=\frac{2n}{2n-1}\Bigl(1-\frac1u\Bigr).
\]
\end{lemma}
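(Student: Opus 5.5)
The plan is to reduce to the affine Kakeya operator $\mathcal M_{2n}$ on $\mathbb F_q^{2n}$ via Lemma~\ref{lem:Hn_domination}, and then run for $\mathcal M_{2n}$ the same linearization-plus-interpolation scheme as in Theorem~\ref{thm:planar-all-u}. The diagonal $\ell^2$ input used there is replaced by the $\ell^{2n}$ estimate of Theorem~\ref{thm:EOT_lines} with $d=2n$. Concretely, given $F$ and $u$, set $G=G_{F,u}$. Then $\|G\|_{\ell^u(\mathbb F_q^{2n})}=\|F\|_{\ell^u(\mathbb H_n(\mathbb F_q))}$ and $\mathcal M_{\mathbb H_n}F\le \mathcal M_{2n}G$ pointwise. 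It therefore suffices to prove
\[
\|\mathcal M_{2n}G\|_{\ell^u(\mathbb P^{2n-1}(\mathbb F_q))}\le C_{n,u}\,q^{\tau_{2n}(u)}\|G\|_{\ell^u(\mathbb F_q^{2n})}
\]
for nonnegative $G$.

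First I will linearize. For each $\omega\in\mathbb P^{2n-1}(\mathbb F_q)$, choose a line $\ell_\omega$ attaining the maximum for $G$, and let $Tf(\omega)=\sum_{\mathbf z\in\ell_\omega}f(\mathbf z)$. This $T$ is linear and satisfies $TG=\mathcal M_{2n}G$ and $|Tf|\le \mathcal M_{2n}|f|$. Next I will record three endpoint bounds for $T$.
\begin{enumerate}
\item[(a)] $\|Tf\|_{\ell^1}\le |\mathbb P^{2n-1}|\,\|f\|_{\ell^1}\le 2q^{2n-1}\|f\|_{\ell^1}$, using \eqref{eq:Psize} and the bound of each line sum by the total $\ell^1$ mass.
\item[(b)] $\|Tf\|_{\ell^\infty}\le q\|f\|_{\ell^\infty}$, since every line has $q$ points.
\item[(c)] $\|Tf\|_{\ell^{2n}}\le \|\mathcal M_{2n}|f|\|_{\ell^{2n}}\le C_{2n}q^{\frac{2n-1}{2n}}\|f\|_{\ell^{2n}}$, by Theorem~\ref{thm:EOT_lines}. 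Passing to $|f|$ is harmless because the EOT bound is stated for real functions and the operator only sees $|f|$.
\end{enumerate}

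For $1\le u\le 2n$, I interpolate (a) and (c) with Lemma~\ref{interpolation-inequality}, writing $\frac1u=(1-\theta)+\frac{\theta}{2n}$. This gives $\theta=\frac{2n}{2n-1}(1-\frac1u)=\theta(u)$, and the constant is $2^{1-\theta}C_{2n}^{\theta}$. The power of $q$ is
\[
(2n-1)(1-\theta)+\theta\cdot\frac{2n-1}{2n}=(2n-1)\Bigl(1-\frac{\theta(2n-1)}{2n}\Bigr)=(2n-1)\Bigl(1-\Bigl(1-\frac1u\Bigr)\Bigr)=\frac{2n-1}{u}.
\]
For $2n\le u\le\infty$, I interpolate (c) and (b) with $\frac1u=\frac{1-\theta}{2n}$, that is, $1-\theta=\frac{2n}{u}$. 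The constant is $C_{2n}^{2n/u}$, and the power of $q$ is
\[
\frac{2n}{u}\cdot\frac{2n-1}{2n}+1-\frac{2n}{u}=1-\frac1u.
\]
Both cases match the stated $\tau_{2n}$ and $C_{n,u}$ exactly. Applying the result with $f=G$ and chaining it with the domination lemma finishes the proof.

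I expect no step to be a real obstacle. The one point needing care is the bookkeeping that the linearized operator inherits the EOT bound uniformly over all line choices, which holds via the pointwise bound $|Tf|\le\mathcal M_{2n}|f|$. The other is checking that the constant $2$ from \eqref{eq:Psize} enters only the $\ell^1$ endpoint, so that the stated $C_{n,u}$ comes out exactly.
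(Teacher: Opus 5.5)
Your proposal is correct and follows the paper's proof essentially step for step. You dominate by $\mathcal M_{2n}G_{F,u}$ and linearize, using the uniform pointwise bound $|Tf|\le\mathcal M_{2n}|f|$ to inherit the Ellenberg--Oberlin--Tao $\ell^{2n}$ estimate, and then interpolate against the trivial $\ell^1$ endpoint (constant $2q^{2n-1}$) for $u\le 2n$ and the trivial $\ell^\infty$ endpoint (constant $q$) for $u\ge 2n$, which gives exactly the stated $\tau_{2n}(u)$ and $C_{n,u}$.
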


\begin{proof}
{ 
Let $G=G_{F,u}$ be the non-negative valued function on $\mathbb{F}_q^{2n}$ associated with $F$ as in Lemma~\ref{lem:Hn_domination}. By that lemma,
\[
\|\mathcal{M}_{\mathbb{H}_n}F\|_{\ell^u(\mathbb{P}^{2n-1}(\mathbb{F}_q))}
\le
\|\mathcal{M}_{2n}G\|_{\ell^u(\mathbb{P}^{2n-1}(\mathbb{F}_q))},
\qquad
\|G\|_{\ell^u(\mathbb{F}_q^{2n})}
=
\|F\|_{\ell^u(\mathbb{H}_n(\mathbb{F}_q))}.
\]
We prove the corresponding estimate for $\mathcal{M}_{2n}G$ by linearization.
For each direction $[\mathbf v]\in\mathbb P^{2n-1}(\mathbb F_q)$, choose an affine line
$\ell_{[\mathbf v]}\subset\mathbb F_q^{2n}$ with direction $[\mathbf v]$ such that
\[
\mathcal{M}_{2n}G([\mathbf v])
=
\sum_{\mathbf x\in\ell_{[\mathbf v]}}G(\mathbf x),
\]
which is possible since the field is finite and $G\ge 0$. Define the linear operator
\[
(Tf)([\mathbf v])
:=
\sum_{\mathbf x\in\ell_{[\mathbf v]}} f(\mathbf x),
\qquad [\mathbf v]\in\mathbb P^{2n-1}(\mathbb F_q).
\]
Then $TG=\mathcal{M}_{2n}G$ pointwise. Moreover, for every $f$,
\[
|(Tf)([\mathbf v])|
\le
\sum_{\mathbf x\in\ell_{[\mathbf v]}} |f(\mathbf x)|
\le
\mathcal{M}_{2n}f([\mathbf v]),
\]
where the maximal operator on the right is applied to $|f|$.
Hence Theorem~\ref{thm:EOT_lines} gives the uniform endpoint bound
\[
\|Tf\|_{\ell^{2n}(\mathbb{P}^{2n-1}(\mathbb{F}_q))}
\le
C_{2n}\,q^{\frac{2n-1}{2n}}
\|f\|_{\ell^{2n}(\mathbb{F}_q^{2n})}.
\]
We also have the trivial $\ell^1$ endpoint
\[
\begin{aligned}
\|Tf\|_{\ell^1(\mathbb{P}^{2n-1}(\mathbb{F}_q))}
&\le
\sum_{[\mathbf v]\in\mathbb P^{2n-1}(\mathbb F_q)}
\sum_{\mathbf x\in\ell_{[\mathbf v]}} |f(\mathbf x)|  \\
&\le
\bigl|\mathbb P^{2n-1}(\mathbb F_q)\bigr|\,
\|f\|_{\ell^1(\mathbb F_q^{2n})}
\le
2q^{2n-1}\|f\|_{\ell^1(\mathbb F_q^{2n})}.
\end{aligned}
\]
Interpolating these two linear estimates gives, for $1\le u\le 2n$,
\[
\|Tf\|_{\ell^u(\mathbb{P}^{2n-1}(\mathbb{F}_q))}
\le
C_{n,u}\,q^{\frac{2n-1}{u}}
\|f\|_{\ell^u(\mathbb F_q^{2n})},
\]
where
\[
C_{n,u}=2^{\,1-\theta(u)}C_{2n}^{\,\theta(u)},
\qquad
\theta(u):=\frac{2n}{2n-1}\Bigl(1-\frac1u\Bigr).
\]
Applying this to $f=G$ proves the claimed estimate in the range $1\le u\le 2n$.

For $2n\le u\le\infty$, we use the same linearization. The trivial endpoint
\[
\|Tf\|_{\ell^\infty(\mathbb P^{2n-1}(\mathbb F_q))}
\le
q\|f\|_{\ell^\infty(\mathbb F_q^{2n})}
\]
and the $u=2n$ endpoint above interpolate to
\[
\|Tf\|_{\ell^u(\mathbb{P}^{2n-1}(\mathbb{F}_q))}
\le
C_{n,u}\,q^{1-\frac1u}
\|f\|_{\ell^u(\mathbb F_q^{2n})},
\]
where $C_{n,u}=C_{2n}^{\frac{2n}{u}}$. Applying this to $f=G$ proves the claimed estimate in the range $2n\le u\le\infty$.
Combining the two ranges gives
\[
\|\mathcal{M}_{\mathbb{H}_n}F\|_{\ell^u(\mathbb{P}^{2n-1}(\mathbb{F}_q))}
\le
C_{n,u}\,q^{\tau_{2n}(u)}
\|F\|_{\ell^u(\mathbb{H}_n(\mathbb{F}_q))}.
\]
}
\end{proof}

The remaining arguments mirror those in Section~\ref{section4}, with $|\mathbb{P}^{1}(\mathbb{F}_q)|=q+1$ replaced by
$|\mathbb{P}^{2n-1}(\mathbb{F}_q)|$.

\subsection{The case $A_n(u,v)\le \frac{2n-1}{v}
$}
\begin{lemma}\label{lem:Hn_upper_1}
Let $1\le v\le u\le 2n$.
Then
\[
A_n(u,v)\le \frac{2n-1}{v}.
\]
\end{lemma}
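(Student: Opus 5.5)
The plan mirrors Lemma~\ref{thm:green}. We combine the diagonal estimate of Lemma~\ref{lem:Hn_diagonal} at the exponent $u$ with the $\ell^p$ embedding of Lemma~\ref{lem:embed} on the direction set $\mathbb{P}^{2n-1}(\mathbb{F}_q)$.

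\emph{Step 1: pass from $\ell^v$ to $\ell^u$ on the direction set.} Fix $F:\mathbb{H}_n(\mathbb{F}_q)\to\mathbb{C}$ and set $g=\mathcal{M}_{\mathbb{H}_n}F$, viewed as a function on $X=\mathbb{P}^{2n-1}(\mathbb{F}_q)$. Since $v\le u$, the second inequality of Lemma~\ref{lem:embed} gives
\[
\|g\|_{\ell^v(X)}\le |X|^{\frac1v-\frac1u}\,\|g\|_{\ell^u(X)}.
\]
By \eqref{eq:Psize}, this factor is at most $(2q^{2n-1})^{\frac1v-\frac1u}\le 2\,q^{(2n-1)(\frac1v-\frac1u)}$.

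\emph{Step 2: apply the diagonal bound.} Since $1\le u\le 2n$, Lemma~\ref{lem:Hn_diagonal} gives
\[
\|g\|_{\ell^u(X)}\le C_{n,u}\,q^{\frac{2n-1}{u}}\,\|F\|_{\ell^u(\mathbb{H}_n(\mathbb{F}_q))}.
\]
Multiplying the two bounds, the powers of $q$ combine as $(2n-1)(\frac1v-\frac1u)+\frac{2n-1}{u}=\frac{2n-1}{v}$. Hence
\[
\|\mathcal{M}_{\mathbb{H}_n}F\|_{\ell^v(X)}\le 2C_{n,u}\,q^{\frac{2n-1}{v}}\,\|F\|_{\ell^u(\mathbb{H}_n(\mathbb{F}_q))}.
\]
By the definition \eqref{eq:An_def}, this yields $A_n(u,v)\le\frac{2n-1}{v}$.

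There is no real obstacle here. All the substance lies in Lemma~\ref{lem:Hn_diagonal}, and through it in the Ellenberg--Oberlin--Tao theorem. The only care needed is to check that the constant $2C_{n,u}$ depends only on $n,u,v$ and not on $q$. This holds because $0\le\frac1v-\frac1u\le 1$, so $2^{\frac1v-\frac1u}\le 2$.
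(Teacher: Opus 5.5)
Your proposal is correct and is essentially the paper's proof: you apply the $\ell^v\to\ell^u$ embedding on $\mathbb P^{2n-1}(\mathbb F_q)$ together with \eqref{eq:Psize}, then Lemma~\ref{lem:Hn_diagonal} at exponent $u$, and the powers of $q$ combine to $\frac{2n-1}{v}$. Your explicit check that the constant $2C_{n,u}$ is independent of $q$ is a small detail the paper leaves implicit.
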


\begin{proof}
Since $\mathbb{P}^{2n-1}(\mathbb{F}_q)$ is a finite set, the standard $\ell^p$--embedding implies
\begin{align}\label{embedding-ell^u}
\|\mathcal{M}_{\mathbb{H}_n}F\|_{\ell^v(\mathbb{P}^{2n-1}(\mathbb{F}_q))}
\le
\bigl|\mathbb{P}^{2n-1}(\mathbb{F}_q)\bigr|^{\frac{1}{v}-\frac{1}{u}}\,
\|\mathcal{M}_{\mathbb{H}_n}F\|_{\ell^u(\mathbb{P}^{2n-1}(\mathbb{F}_q))}.
\end{align}
Since $u\le 2n$, Lemma~\ref{lem:Hn_diagonal} gives
\[
\|\mathcal{M}_{\mathbb{H}_n}F\|_{\ell^u(\mathbb{P}^{2n-1}(\mathbb{F}_q))}
\le
C_{n,u}\, q^{\frac{2n-1}{u}}\,\|F\|_{\ell^u(\mathbb{H}_n(\mathbb{F}_q))}.
\]
Combining these estimates with \eqref{eq:Psize} completes the proof.
\end{proof}

\begin{lemma}\label{lem:Hn_upper_4}
Let $1\leq u<2n$ and $u\le v\le (2n-1)\frac{u}{u-1}$ (with the convention $\frac{u}{u-1}=\infty$ when $u=1$).
Then
\[
A_n(u,v)\le \frac{2n-1}{v}.
\]
\end{lemma}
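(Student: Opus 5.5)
This is the rank-$n$ analogue of Lemma~\ref{thm:lower-right-2}, and I will follow that proof with $|\mathbb P^1(\mathbb F_q)|=q+1$ replaced by $|\mathbb P^{2n-1}(\mathbb F_q)|$ and the $(2,2)$ estimate replaced by the $(2n,2n)$ estimate. By the triangle inequality it suffices to treat non-negative $F$. For each $[\mathbf v]\in\mathbb P^{2n-1}(\mathbb F_q)$ I pick a horizontal line $L_{[\mathbf v]}$ attaining the maximum, and define the linear operator $(T\mathcal G)([\mathbf v])=\sum_{\mathbf p\in L_{[\mathbf v]}}\mathcal G(\mathbf p)$. Then $TF=\mathcal M_{\mathbb H_n}F$, and $|T\mathcal G|\le \mathcal M_{\mathbb H_n}|\mathcal G|$ pointwise.

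\emph{Endpoint bounds.} First, the trivial bound $\|T\mathcal G\|_{\ell^\infty}\le\|\mathcal G\|_{\ell^1}$, combined with Lemma~\ref{lem:embed} and \eqref{eq:Psize}, gives for every $r\in[1,\infty]$
\[
\|T\mathcal G\|_{\ell^r(\mathbb P^{2n-1}(\mathbb F_q))}\le (2q^{2n-1})^{\frac1r}\|\mathcal G\|_{\ell^1(\mathbb H_n(\mathbb F_q))}.
\]
Second, Lemma~\ref{lem:Hn_diagonal} with $u=2n$ gives
\[
\|T\mathcal G\|_{\ell^{2n}}\le C_{n,2n}\,q^{\frac{2n-1}{2n}}\|\mathcal G\|_{\ell^{2n}(\mathbb H_n(\mathbb F_q))}.
\]

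\emph{Interpolation.} I choose $\theta=\frac{2n}{2n-1}\bigl(1-\frac1u\bigr)$, so that $\frac1u=(1-\theta)+\frac{\theta}{2n}$; the hypothesis $u<2n$ gives $\theta\in[0,1)$. I then pick $r$ with
\[
\frac1v=\frac{1-\theta}{r}+\frac{\theta}{2n}.
\]
Such an $r\in[1,\infty]$ exists exactly when $\frac{\theta}{2n}\le\frac1v\le 1-\theta+\frac{\theta}{2n}=\frac1u$. The right-hand inequality is $v\ge u$. The left-hand inequality reads $\frac1v\ge\frac{1}{2n-1}\bigl(1-\frac1u\bigr)$, which is precisely $v\le(2n-1)\frac{u}{u-1}$. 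So the hypotheses of the lemma are exactly the conditions needed for this interpolation.

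Applying Lemma~\ref{interpolation-inequality} gives a constant times
\[
q^{\frac{(2n-1)(1-\theta)}{r}+\frac{(2n-1)\theta}{2n}}=q^{\frac{2n-1}{v}},
\]
with constant $2^{(1-\theta)/r}C_{n,2n}^{\theta}\le 2C_{n,2n}$. Taking $\mathcal G=F$ finishes the proof.

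The only point needing care is checking that the admissible range of $v$ matches the existence of $r$; the rest is a direct transcription of the rank-one argument. The case $u=1$ corresponds to $\theta=0$: the bound then comes directly from the $\ell^1\to\ell^r$ estimate with $r=v$, and any $v\ge1$ is allowed, consistent with the convention $\frac{u}{u-1}=\infty$.
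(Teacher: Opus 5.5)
Your proposal is correct and follows the paper's proof essentially step by step. You linearize, then interpolate between the $\ell^1\to\ell^r$ bound (obtained from $\ell^1\to\ell^\infty$ and the factor $|\mathbb{P}^{2n-1}(\mathbb{F}_q)|^{1/r}$) and the $\ell^{2n}\to\ell^{2n}$ estimate of Lemma~\ref{lem:Hn_diagonal}, with the same $\theta=\frac{2n}{2n-1}\bigl(1-\frac1u\bigr)$ and auxiliary $r$; your check that $u\le v\le (2n-1)\frac{u}{u-1}$ is equivalent to $0\le\frac1r\le1$ spells out what the paper only asserts, and the only (cosmetic) quibble is that the reduction to $F\ge0$ comes from $\mathcal{M}_{\mathbb{H}_n}F=\mathcal{M}_{\mathbb{H}_n}|F|$ and $\|F\|_{\ell^u}=\||F|\|_{\ell^u}$, not from the triangle inequality.
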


\begin{proof}
When $u=1$ we have 
\[
\|\mathcal{M}_{\mathbb{H}_n}F\|_{\ell^v(\mathbb{P}^{2n-1}(\mathbb{F}_q))}\leq |\mathbb P^{2n-1}(\F_q)|^{\frac{1}{v}}\|F\|_{\ell^1(\mathbb H_n(\F_q))}\lesssim q^{\frac{2n-1}{v}}\|F\|_{\ell^1(\mathbb H_n(\F_q))}.
\]
In the following, we assume that $u>1$. 

Let $\widetilde{F}:=|F|$, the absolute value of $F$. Fix, for each $[\mathbf{v}]\in \mathbb{P}^{2n-1}(\mathbb{F}_q)$, a horizontal line $L_{[\bfv]}$ such that
\[
\mathcal{M}_{\mathbb{H}_n} \widetilde{F}([\mathbf{v}])=\sum_{\mathbf{x}\in L_{[\mathbf{v}]}} \widetilde{F}(\mathbf{x}).
\]
Define the linear operator $T$ on functions $\mathcal G:\mathbb{H}_n(\mathbb{F}_q)\to\mathbb{C}$ by
\[
(T \mathcal G)([\mathbf{v}])=\sum_{\mathbf x\in L_{[\mathbf{v}]}} \mathcal G(\mathbf x).
\]
Then $\|\mathcal{M}_{\mathbb{H}_n}F\|_{\ell^v(\mathbb{P}^{2n-1}(\mathbb{F}_q))}=\|T \widetilde{F}\|_{\ell^v(\mathbb{P}^{2n-1}(\mathbb{F}_q))}$ and $\|\widetilde{F}\|_{\ell^u(\mathbb{H}_n(\mathbb{F}_q))}=\|F\|_{\ell^u(\mathbb{H}_n(\mathbb{F}_q))}$.

Choose $\theta\in(0,1)$ and $r\in[1,\infty]$ such that
\begin{align}\label{definition-r}
    \frac{1}{u} = (1-\theta)\cdot 1 + \theta \cdot \frac{1}{2n},
\qquad
\frac{1}{v} = (1-\theta)\cdot \frac{1}{r} + \theta \cdot \frac{1}{2n}.
\end{align}
Solving the first identity implies
\[
\theta = \frac{2n}{2n-1}\left(1-\frac{1}{u}\right).
\]
The assumption $u \le v \le (2n-1)\frac{u}{u-1}$ is equivalent to
\[
0 \le \frac{1}{r} \le 1,
\]
and therefore such a choice of $r$ is possible.

We record two endpoint bounds for $T$.
For every $\mathcal G$ and every $[\mathbf{v}]$ one has $|(T\mathcal G)([\mathbf{v}])|\le \|\mathcal G\|_{\ell^1(\mathbb{H}_n(\mathbb{F}_q))}$.
Therefore,
\[
\|T\mathcal G\|_{\ell^r(\mathbb{P}^{2n-1}(\mathbb{F}_q))}
\le
\bigl|\mathbb{P}^{2n-1}(\mathbb{F}_q)\bigr|^{\frac{1}{r}}\, \|\mathcal  G\|_{\ell^1(\mathbb{H}_n(\mathbb{F}_q))}.
\]
Also, $|T \mathcal G|\le \mathcal{M}_{\mathbb{H}_n}(|\mathcal G|)$ pointwise.
Lemma~\ref{lem:Hn_diagonal} at $u=2n$ gives
\[
\|T\mathcal G\|_{\ell^{2n}(\mathbb{P}^{2n-1}(\mathbb{F}_q))}
\le
C_{2n}\, q^{\frac{2n-1}{2n}}\, \|\mathcal G\|_{\ell^{2n}(\mathbb{H}_n(\mathbb{F}_q))}.
\]
Applying Lemma~\ref{interpolation-inequality} to $T$ with these two endpoints and using \eqref{definition-r}, we obtain
\[
\|T\mathcal G\|_{\ell^v(\mathbb{P}^{2n-1}(\mathbb{F}_q))}
\le
C_{n,u,v}\,
q^{(2n-1)\bigl(\frac{1-\theta}{r}+\frac{\theta}{2n}\bigr)}\,
\|\mathcal G\|_{\ell^u(\mathbb{H}_n(\mathbb{F}_q))},
\]
where $C_{n,u,v}:=2^{\frac{1}{v}-\frac{\theta(n, u)}{2n}} C_{2n}^{\theta(n, u)}$ and $\theta(n, u):=\frac{2n}{2n-1} \big(1-\frac{1}{u}\big)$.
Since $\frac{1-\theta}{r}+\frac{\theta}{2n}=\frac{1}{v}$, this becomes
\[
\|T \widetilde{F}\|_{\ell^v(\mathbb{P}^{2n-1}(\mathbb{F}_q))}
\le
C_{n,u,v}\, q^{\frac{2n-1}{v}}\, \|\widetilde{F} \|_{\ell^u(\mathbb{H}_n(\mathbb{F}_q))},
\]
which implies the claim.
\end{proof}

\subsection{The case $A_n(u,v)\le 1-\frac1u$}
\begin{lemma}\label{lem:Hn_upper_2}
Let $2n\le u\le v\le \infty$.
Then
\[
A_n(u,v)\le 1-\frac1u.
\]
\end{lemma}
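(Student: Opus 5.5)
This is the direct analogue of Lemma~\ref{thm:blue}, with the rank-one diagonal estimate replaced by Lemma~\ref{lem:Hn_diagonal}. The plan is to first pass from $\ell^v$ to $\ell^u$ on the direction set, and then apply the diagonal bound in the range $u\ge 2n$.

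First, since $u\le v$, Lemma~\ref{lem:embed} applied on the finite set $X=\mathbb P^{2n-1}(\mathbb F_q)$ with $g=\mathcal M_{\mathbb H_n}F$ gives
\[
\|\mathcal M_{\mathbb H_n}F\|_{\ell^v(\mathbb P^{2n-1}(\mathbb F_q))}\le \|\mathcal M_{\mathbb H_n}F\|_{\ell^u(\mathbb P^{2n-1}(\mathbb F_q))}.
\]
This inequality has no dependence on $q$, since it is the monotonicity of $\ell^p$ norms under counting measure.

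Second, because $u\ge 2n$, Lemma~\ref{lem:Hn_diagonal} in the range $2n\le u\le\infty$ yields
\[
\|\mathcal M_{\mathbb H_n}F\|_{\ell^u(\mathbb P^{2n-1}(\mathbb F_q))}\le C_{2n}^{\frac{2n}{u}}\,q^{1-\frac1u}\,\|F\|_{\ell^u(\mathbb H_n(\mathbb F_q))},
\]
where we read $C_{2n}^{2n/u}=1$ when $u=\infty$. Chaining the two displays gives \eqref{eq:An_def} with exponent $1-\frac1u$. The constant $C_{n,u,v}=C_{2n}^{2n/u}$ is independent of $q$, so $A_n(u,v)\le 1-\frac1u$.

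There is no genuine obstacle here. The substantive input is the diagonal estimate, which rests on Theorem~\ref{thm:EOT_lines} of Ellenberg, Oberlin and Tao and has already been transferred to $\mathbb H_n$ via Lemma~\ref{lem:Hn_domination}. The only point to check is the endpoint $u=v=\infty$. There the bound reduces to the trivial estimate that every horizontal line has $q$ points, so $\mathcal M_{\mathbb H_n}F\le q\|F\|_{\ell^\infty}$, which agrees with the claimed exponent $1-\frac1u=1$.
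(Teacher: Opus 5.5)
Your proposal is correct and follows the paper's proof exactly. The paper likewise uses monotonicity of $\ell^p$ norms on $\mathbb{P}^{2n-1}(\mathbb{F}_q)$ to pass from $\ell^v$ to $\ell^u$, and then applies the diagonal estimate of Lemma~\ref{lem:Hn_diagonal} in the range $2n\le u\le\infty$.
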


\begin{proof}
Since $\mathbb{P}^{2n-1}(\mathbb{F}_q)$ is a finite set and $u \le v$, the monotonicity of $\ell^p$ norms implies that
$$ \|\mathcal{M}_{\mathbb{H}_n}F\|_{\ell^v(\mathbb{P}^{2n-1}(\mathbb{F}_q))}\le \|\mathcal{M}_{\mathbb{H}_n}F\|_{\ell^u(\mathbb{P}^{2n-1}(\mathbb{F}_q))}. $$
Moreover, by Lemma~\ref{lem:Hn_diagonal}, we have
$$ \|\mathcal{M}_{\mathbb{H}_n}F\|_{\ell^u(\mathbb{P}^{2n-1}(\mathbb{F}_q))}\le C_{n,u} q^{1-\frac{1}{u}}\|F\|_{\ell^u(\mathbb{H}_n(\mathbb{F}_q))}. 
$$
This proves the claim.
\end{proof}

\begin{lemma}\label{lem:Hn_upper_5}
Let $1\le u\le 2n$ and $v\ge (2n-1)\frac{u}{u-1}$.
Then
\[
A_n(u,v)\le 1-\frac1u.
\]
\end{lemma}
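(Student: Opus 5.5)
The plan is to mirror the proof of Lemma~\ref{thm:lower-right-1}, replacing the $(2,2)$ endpoint with the $\ell^{2n}\to\ell^{2n}$ endpoint of Lemma~\ref{lem:Hn_diagonal}. The case $u=1$ is immediate. There $v\ge(2n-1)\frac{u}{u-1}$ forces $v=\infty$, and $\mathcal M_{\mathbb H_n}F([\mathbf v])\le\|F\|_{\ell^1}$ pointwise gives exponent $0=1-\frac1u$. So we assume $1<u\le 2n$.

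First, linearize as in Lemma~\ref{lem:Hn_upper_4}. Replace $F$ by $|F|$ and choose, for each $[\mathbf v]\in\mathbb P^{2n-1}(\mathbb F_q)$, a maximizing horizontal line $L_{[\mathbf v]}$. Let $T\mathcal G([\mathbf v])=\sum_{\mathbf x\in L_{[\mathbf v]}}\mathcal G(\mathbf x)$, so that $\mathcal M_{\mathbb H_n}F=T|F|$. We have two endpoint bounds for $T$:
\[
\|T\mathcal G\|_{\ell^\infty(\mathbb P^{2n-1}(\mathbb F_q))}\le \|\mathcal G\|_{\ell^1(\mathbb H_n(\mathbb F_q))},
\]
which is trivial, and
\[
\|T\mathcal G\|_{\ell^{2n}(\mathbb P^{2n-1}(\mathbb F_q))}\le C_{2n}\,q^{\frac{2n-1}{2n}}\|\mathcal G\|_{\ell^{2n}(\mathbb H_n(\mathbb F_q))}.
\]
The second follows from $|T\mathcal G|\le\mathcal M_{\mathbb H_n}|\mathcal G|$ and Lemma~\ref{lem:Hn_diagonal} at $u=2n$.

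Next, interpolate with Lemma~\ref{interpolation-inequality} between $(p_0,r_0)=(1,\infty)$ and $(p_1,r_1)=(2n,2n)$. The parameter is $\theta=\frac{2n}{2n-1}\bigl(1-\frac1u\bigr)\in[0,1]$, which lies in range because $1\le u\le 2n$. The resulting target exponent $r$ satisfies $\frac1r=\frac{\theta}{2n}=\frac{1}{2n-1}\bigl(1-\frac1u\bigr)$, that is, $r=(2n-1)\frac{u}{u-1}$. The resulting power of $q$ is
\[
\Bigl(q^{\frac{2n-1}{2n}}\Bigr)^{\theta}=q^{1-\frac1u}.
\]
The constant is $C_{2n}^{\theta}$, which is independent of $q$.

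Finally, since $v\ge r$, Lemma~\ref{lem:embed} gives $\|T\mathcal G\|_{\ell^v}\le\|T\mathcal G\|_{\ell^r}$. Applying this with $\mathcal G=|F|$ yields $A_n(u,v)\le 1-\frac1u$. There is no real obstacle: the only point to check is that the interpolation exponent $r$ lands exactly at the threshold $(2n-1)\frac{u}{u-1}$ in the hypothesis, and this is the computation just made.
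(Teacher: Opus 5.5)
Your proof is correct and is essentially the paper's own argument. The $u=1$ case uses the trivial $\ell^1\to\ell^\infty$ bound. For $1<u\le 2n$ you linearize and interpolate that endpoint against the $\ell^{2n}\to\ell^{2n}$ bound of Lemma~\ref{lem:Hn_diagonal} with $\theta=\frac{2n}{2n-1}\bigl(1-\frac1u\bigr)$, which lands exactly at $\ell^{(2n-1)\frac{u}{u-1}}$ with factor $q^{1-\frac1u}$, and then finish with the embedding of Lemma~\ref{lem:embed}.
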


\begin{proof}
When $u=1$, the condition on $v$ forces $v=\infty$, and the bound evaluates exactly to 1, perfectly matching $q^{1-1/u}$. 

Now assume that $1<u\le 2n$.
By the linearization argument in Lemma~\ref{lem:Hn_upper_4}, it suffices to prove the bound for the associated linear operator $T$.
We use the endpoint estimates
\[
\|T \mathcal G\|_{\ell^\infty(\mathbb{P}^{2n-1}(\mathbb{F}_q))}\le \|\mathcal G\|_{\ell^1(\mathbb{H}_n(\mathbb{F}_q))},
\qquad
\|T\mathcal G\|_{\ell^{2n}(\mathbb{P}^{2n-1}(\mathbb{F}_q))}\le C_{2n}\, q^{\frac{2n-1}{2n}}\, \|\mathcal G\|_{\ell^{2n}(\mathbb{H}_n(\mathbb{F}_q))}.
\]
Let $\theta\in (0,1]$ satisfy $\frac1u=(1-\theta)\cdot 1+\theta\cdot\frac1{2n}$.
Then $\theta=\frac{2n}{2n-1}\left(1-\frac1u\right)$.
Interpolation implies
\[
\|T \mathcal G\|_{\ell^{\frac{2n}{\theta}}(\mathbb{P}^{2n-1}(\mathbb{F}_q))}
\le
C_{n,u}\, q^{\frac{(2n-1)\theta}{2n}}\, \|\mathcal G\|_{\ell^u(\mathbb{H}_n(\mathbb{F}_q))}
=
C_{n,u}\, q^{1-\frac{1}{u}}\,\|\mathcal G\|_{\ell^u(\mathbb{H}_n(\mathbb{F}_q))}.
\]
Since $\frac{2n}{\theta}=(2n-1)\frac{u}{u-1}$ and $v\ge (2n-1)\frac{u}{u-1}$, one has
\[
\|T \mathcal G\|_{\ell^v(\mathbb{P}^{2n-1}(\mathbb{F}_q))}
\le \|T \mathcal G\|_{\ell^{\frac{(2n-1)u}{u-1}}(\mathbb{P}^{2n-1}(\mathbb{F}_q))}
\le C_{n,u}\, q^{1-\frac{1}{u}}\, \|\mathcal G\|_{\ell^u(\mathbb{H}_n(\mathbb{F}_q))}.
\]
This implies the claim.
\end{proof}

\subsection{The case $
A_n(u,v)\le 1+\frac{2n-1}{v}-\frac{2n}{u}$}
\begin{lemma}\label{lem:Hn_upper_3}
Let $2n\le u\le \infty$ and $1\le v\le u$.
Then
\[
A_n(u,v)\le 1+\frac{2n-1}{v}-\frac{2n}{u}.
\]
\end{lemma}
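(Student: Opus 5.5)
The argument follows the rank-one model of Lemma~\ref{thm:upper-left}: first move from $\ell^v$ to $\ell^u$ on the direction space, then apply the diagonal bound at exponent $u$.

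\textbf{Step 1 (embedding).} Since $v\le u$ and $\mathbb P^{2n-1}(\mathbb F_q)$ is finite, Lemma~\ref{lem:embed} gives
\[
\|\mathcal{M}_{\mathbb{H}_n}F\|_{\ell^v(\mathbb{P}^{2n-1}(\mathbb{F}_q))}
\le
\bigl|\mathbb{P}^{2n-1}(\mathbb{F}_q)\bigr|^{\frac1v-\frac1u}\,
\|\mathcal{M}_{\mathbb{H}_n}F\|_{\ell^u(\mathbb{P}^{2n-1}(\mathbb{F}_q))}.
\]
By \eqref{eq:Psize}, the prefactor is at most $2^{\frac1v-\frac1u}q^{(2n-1)(\frac1v-\frac1u)}$. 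Since $0\le \frac1v-\frac1u\le 1$, the constant $2^{\frac1v-\frac1u}$ is at most $2$.

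\textbf{Step 2 (diagonal bound).} Because $u\ge 2n$, we are in the second regime of Lemma~\ref{lem:Hn_diagonal}, which gives
\[
\|\mathcal{M}_{\mathbb{H}_n}F\|_{\ell^u(\mathbb{P}^{2n-1}(\mathbb{F}_q))}\le C_{n,u}\,q^{1-\frac1u}\|F\|_{\ell^u(\mathbb{H}_n(\mathbb{F}_q))}.
\]
The constant $C_{n,u}=C_{2n}^{2n/u}$ is bounded independently of $q$ (by $\max\{1,C_{2n}\}$).

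\textbf{Step 3 (combine).} Putting the two bounds together, the total exponent of $q$ is
\[
(2n-1)\Bigl(\frac1v-\frac1u\Bigr)+1-\frac1u=1+\frac{2n-1}{v}-\frac{2n}{u},
\]
which is exactly the claimed bound on $A_n(u,v)$.

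The case $u=\infty$ needs no separate treatment, since Lemma~\ref{lem:Hn_diagonal} covers $u=\infty$ and the embedding is valid there. No step is a real obstacle; the only thing to check is that the constants stay independent of $q$, and Steps 1 and 2 already confirm this.
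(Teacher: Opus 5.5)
Your proposal is correct and is essentially the paper's proof: the paper likewise applies the $\ell^v\to\ell^u$ embedding on $\mathbb{P}^{2n-1}(\mathbb{F}_q)$, then the $u\ge 2n$ case of Lemma~\ref{lem:Hn_diagonal}, and absorbs $|\mathbb{P}^{2n-1}(\mathbb{F}_q)|^{\frac1v-\frac1u}$ via \eqref{eq:Psize} to get the exponent $1+\frac{2n-1}{v}-\frac{2n}{u}$. Your explicit checks that $2^{\frac1v-\frac1u}\le 2$ and $C_{2n}^{2n/u}\le\max\{1,C_{2n}\}$ are slightly more careful than the paper's unspecified $C_{n,u,v}$, but the argument is the same.
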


\begin{proof}
It follows from \eqref{embedding-ell^u} and Lemma~\ref{lem:Hn_diagonal} that 
\begin{align*}
\|\mathcal{M}_{\mathbb{H}_n}F\|_{\ell^v(\mathbb{P}^{2n-1}(\mathbb{F}_q))}
\le &
\bigl|\mathbb{P}^{2n-1}(\mathbb{F}_q)\bigr|^{\frac{1}{v}-\frac{1}{u}}\,
\|\mathcal{M}_{\mathbb{H}_n}F\|_{\ell^u(\mathbb{P}^{2n-1}(\mathbb{F}_q))} \\
\leq & 
\bigl|\mathbb{P}^{2n-1}(\mathbb{F}_q)\bigr|^{\frac{1}{v}-\frac{1}{u}} \bigg( C_{n,u} q^{1-\frac{1}{u}}\|F\|_{\ell^u(\mathbb{H}_n(\mathbb{F}_q))}  \bigg).
\end{align*}
Using \eqref{eq:Psize} implies
\[
\|\mathcal{M}_{\mathbb{H}_n}F\|_{\ell^v(\mathbb{P}^{2n-1}(\mathbb{F}_q))}
\le
C_{n,u,v}\, q^{(2n-1)(\frac{1}{v}-\frac{1}{u})}\, q^{1-\frac{1}{u}}\, \|F\|_{\ell^u(\mathbb{H}_n(\mathbb{F}_q))}
=
C_{n,u,v}\, q^{1+\frac{2n-1}{v}-\frac{2n}{u}}\,\|F\|_{\ell^u(\mathbb{H}_n(\mathbb{F}_q))}.
\]
This proves the claim.
\end{proof}

The lower bounds follow from the same three test functions as in Section~\ref{section5}.
We include the proofs for completeness.

\subsection{The case $A_n(u,v)\ge \frac{2n-1}{v}$}
\begin{lemma}\label{lem:Hn_lower_1}
For every $1\le u,v\le \infty$ one has $A_n(u,v)\ge \frac{2n-1}{v}$.
\end{lemma}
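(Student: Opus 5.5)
We test the inequality \eqref{eq:An_def} on a point mass, exactly as in Lemma~\ref{lem:lb-point}. Fix $\mathbf p_\ast\in\mathbb H_n(\mathbb F_q)$ and put $F=\delta_{\mathbf p_\ast}$. Then $\|F\|_{\ell^u(\mathbb H_n(\mathbb F_q))}=1$ for every $1\le u\le\infty$, so everything comes down to computing $\mathcal M_{\mathbb H_n}F$.

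The key observation is that every horizontal direction has a line through $\mathbf p_\ast$. For each $[\mathbf v]\in\mathbb P^{2n-1}(\mathbb F_q)$, the right coset $L_{\mathbf p_\ast,[\mathbf v]}$ contains $\mathbf p_\ast$ (take $s=0$). Hence
\[
\mathcal M_{\mathbb H_n}F([\mathbf v])\ \ge\ \sum_{\mathbf x\in L_{\mathbf p_\ast,[\mathbf v]}}\delta_{\mathbf p_\ast}(\mathbf x)=1 .
\]
Conversely, the quantity is at most $\|F\|_{\ell^1}=1$, so $\mathcal M_{\mathbb H_n}F\equiv 1$ on $\mathbb P^{2n-1}(\mathbb F_q)$.

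It follows that
\[
\|\mathcal M_{\mathbb H_n}F\|_{\ell^v(\mathbb P^{2n-1}(\mathbb F_q))}=\bigl|\mathbb P^{2n-1}(\mathbb F_q)\bigr|^{\frac1v}\ \ge\ q^{\frac{2n-1}{v}},
\]
using the lower bound in \eqref{eq:Psize}; for $v=\infty$ both sides equal $1$. Substituting into \eqref{eq:An_def} gives $q^{\frac{2n-1}{v}}\le C_{n,u,v}\,q^{A_n(u,v)}$ for all odd prime powers $q$. Since $C_{n,u,v}$ does not depend on $q$, letting $q\to\infty$ forces $A_n(u,v)\ge \frac{2n-1}{v}$.

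There is no real obstacle here. The only point to check is that the line $L_{\mathbf p_\ast,[\mathbf v]}$ is admissible in the maximum defining $\mathcal M_{\mathbb H_n}$, which holds because the maximum runs over all basepoints $\mathbf p$.
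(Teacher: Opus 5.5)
Your proposal is correct and follows the paper's proof essentially verbatim: the point mass $\delta_{\mathbf p_\ast}$ gives $\mathcal M_{\mathbb H_n}F\equiv 1$ on $\mathbb P^{2n-1}(\mathbb F_q)$, and \eqref{eq:Psize} then yields the norm ratio $|\mathbb P^{2n-1}(\mathbb F_q)|^{1/v}\ge q^{\frac{2n-1}{v}}$. Your extra remarks (the upper bound $\mathcal M_{\mathbb H_n}F\le\|F\|_{\ell^1}$ and letting $q\to\infty$) are harmless additions that make the argument slightly more explicit.
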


\begin{proof}
Fix $\bfp_\ast\in \mathbb{H}_n(\mathbb{F}_q)$ and set $F=\delta_{\bfp_\ast}$.
For every direction $[\mathbf{v}]\in \mathbb{P}^{2n-1}(\mathbb{F}_q)$ there is a unique horizontal line in direction $[\mathbf{v}]$ through $\bfp_\ast$.
Therefore, $\mathcal{M}_{\mathbb{H}_n}F([\mathbf{v}])=1$ for all $[\mathbf{v}]$.
Together with \eqref{eq:Psize}, we obtain
\[
\|\mathcal{M}_{\mathbb{H}_n}F\|_{\ell^v(\mathbb{P}^{2n-1}(\mathbb{F}_q))}
=
\bigl|\mathbb{P}^{2n-1}(\mathbb{F}_q)\bigr|^{\frac{1}{v}} \geq q^{\frac{2n-1}{v}},
\qquad
\|F\|_{\ell^u(\mathbb{H}_n(\mathbb{F}_q))}=1.
\]
By the definition of $A_n(u,v)$, we obtain $A_n(u,v)\ge \frac{2n-1}{v}$.
\end{proof}

\subsection{The case $A_n(u,v)\ge 1-\frac1u$}
\begin{lemma}\label{lem:Hn_lower_2}
For every $1\le u,v\le \infty$ one has $A_n(u,v)\ge 1-\frac1u$.
\end{lemma}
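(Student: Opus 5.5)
The plan is to mirror the single-line test function of Lemma~\ref{lem:lb-line}. Fix any horizontal line $L\subset\mathbb H_n(\mathbb F_q)$, say $L=L_{\mathbf 0,[\mathbf v_0]}$ for some nonzero $\mathbf v_0=(\mathbf a,\mathbf b)$. Take $F=\mathbf 1_L$. By the parametrization \eqref{eq:line-param-main}, the map $s\mapsto \mathbf p\cdot(s\mathbf a,s\mathbf b,0)$ is injective. Hence $|L|=q$, and so $\|F\|_{\ell^u(\mathbb H_n(\mathbb F_q))}=q^{\frac1u}$ for $1\le u<\infty$, while $\|F\|_{\ell^\infty}=1=q^{0}$. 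In every case the norm equals $q^{1/u}$ with the convention $1/\infty=0$.

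Next I bound the maximal function from below at the single direction $[\mathbf v_0]$. Since $L$ itself is a horizontal line in direction $[\mathbf v_0]$, the definition of $\mathcal M_{\mathbb H_n}$ gives
\[
\mathcal M_{\mathbb H_n}F([\mathbf v_0])\ \ge\ \sum_{\mathbf x\in L}\mathbf 1_L(\mathbf x)=q .
\]
The $\ell^v(\mathbb P^{2n-1}(\mathbb F_q))$ norm of a nonnegative function dominates its value at any single point, for every $1\le v\le\infty$. Therefore $\|\mathcal M_{\mathbb H_n}F\|_{\ell^v(\mathbb P^{2n-1}(\mathbb F_q))}\ge q$.

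Finally, I insert these two computations into \eqref{eq:An_def}. This yields $q\le C_{n,u,v}\,q^{A_n(u,v)}\,q^{\frac1u}$ for all odd prime powers $q$. Since $C_{n,u,v}$ is independent of $q$, letting $q\to\infty$ forces $A_n(u,v)\ge 1-\frac1u$.

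There is no real obstacle here. The only points needing a word of care are that $|L|=q$ (injectivity of the parametrization) and that the $u=\infty$ and $v=\infty$ endpoints are covered by the convention $1/\infty=0$.
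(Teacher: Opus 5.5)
Your proposal is correct and matches the paper's proof: both use the single-line test function $F=\mathbf 1_L$, compute $\|F\|_{\ell^u}=q^{1/u}$, and note $\mathcal M_{\mathbb H_n}F([\mathbf v_0])\ge q$, which forces $A_n(u,v)\ge 1-\frac1u$. Your remarks on the injectivity of the parametrization and on the $u=\infty$, $v=\infty$ endpoints fill in details the paper leaves implicit.
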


\begin{proof}
Let $L\subset \mathbb{H}_n(\mathbb{F}_q)$ be a fixed horizontal line and set $F=\mathbf{1}_L$.
Then $\|F\|_{\ell^u(\mathbb{H}_n(\mathbb{F}_q))}=|L|^{\frac{1}{u}}=q^{\frac{1}{u}}$.
Let $[\mathbf v_0]$ be the direction of $L$.
For this direction one may choose the basepoint $\bfp$ so that $L=L_{\mathbf p, [\mathbf v_0]}$.
Therefore, $\mathcal{M}_{\mathbb{H}_n}F([\mathbf v_0])\ge \sum\limits_{\mathbf x\in L}\mathbf{1}_L(\mathbf x)=q$, and hence
\[
\|\mathcal{M}_{\mathbb{H}_n}F\|_{\ell^v(\mathbb{P}^{2n-1}(\mathbb{F}_q))}\ge q
\]
 for every $1\le v\le\infty$.
This forces $A_n(u,v)\ge 1-\frac1u$.
\end{proof}

\subsection{The case $A_n(u,v)\ge 1+\frac{2n-1}{v}-\frac{2n}{u}$}
\begin{lemma}\label{lem:Hn_lower_3}
For every $1\le u,v\le \infty$ one has $A_n(u,v)\ge 1+\frac{2n-1}{v}-\frac{2n}{u}$.
\end{lemma}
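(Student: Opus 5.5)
The plan mirrors the fan construction of Lemma~\ref{lem:lb-fan}, now in $\mathbb H_n(\mathbb F_q)$. Let
\[
S:=\bigcup_{[\mathbf v]\in\mathbb P^{2n-1}(\mathbb F_q)} L_{\mathbf 0,[\mathbf v]},\qquad F:=\mathbf 1_S .
\]
By \eqref{eq:line-param-main} with $\mathbf p=\mathbf 0$, the Heisenberg twist vanishes, since $\mathbf x_0\cdot\mathbf b-\mathbf y_0\cdot\mathbf a=0$. So $L_{\mathbf 0,[\mathbf v]}=\{(s\mathbf a,s\mathbf b,0):s\in\mathbb F_q\}$ is just the linear line through the origin in the hyperplane $t=0$. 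Two such lines with distinct directions meet only at $\mathbf 0$. Hence $S=\mathbb F_q^{2n}\times\{0\}$: every point $(\mathbf x,\mathbf y,0)$ with $(\mathbf x,\mathbf y)\neq \mathbf 0$ lies on the line of direction $[\mathbf x:\mathbf y]$. Therefore $|S|=q^{2n}$, and equivalently $1+|\mathbb P^{2n-1}(\mathbb F_q)|(q-1)=q^{2n}$ by \eqref{size-P^{2n-1}}. This gives $\|F\|_{\ell^u(\mathbb H_n(\mathbb F_q))}=q^{2n/u}$, with the usual convention that this equals $1$ when $u=\infty$.

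For each direction $[\mathbf v]$, the line $L_{\mathbf 0,[\mathbf v]}$ lies inside $S$, so $\mathcal M_{\mathbb H_n}F([\mathbf v])\ge q$. Using \eqref{eq:Psize},
\[
\|\mathcal M_{\mathbb H_n}F\|_{\ell^v(\mathbb P^{2n-1}(\mathbb F_q))}\ge q\,|\mathbb P^{2n-1}(\mathbb F_q)|^{1/v}\ge q^{1+\frac{2n-1}{v}},
\]
which also holds for $v=\infty$. Inserting both norms into \eqref{eq:An_def} gives $q^{1+\frac{2n-1}{v}}\le C_{n,u,v}\,q^{A_n(u,v)}\,q^{2n/u}$ for all odd prime powers $q$. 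Letting $q\to\infty$ then forces $A_n(u,v)\ge 1+\frac{2n-1}{v}-\frac{2n}{u}$.

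There is no real obstacle here. The only point that needs checking is the disjointness of the fan lines away from the origin, which gives $|S|=q^{2n}$. This follows immediately because the twist vanishes at the basepoint $\mathbf 0$, so the fan reduces to the affine fan of all lines through the origin in $\mathbb F_q^{2n}$.
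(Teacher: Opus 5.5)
Your proposal is correct and follows the paper's proof essentially verbatim: the fan $S$ of horizontal lines through $\mathbf 0$, the count $|S|=q^{2n}$, the bound $\mathcal M_{\mathbb H_n}\mathbf 1_S\ge q$ on every direction, and the comparison of norms via \eqref{eq:Psize}. Your extra remark that the twist vanishes at $\mathbf 0$, so that $S=\mathbb F_q^{2n}\times\{0\}$, is a correct and slightly more transparent justification of the disjointness the paper simply asserts.
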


\begin{proof}
Let
\[
S:=\bigcup_{[\mathbf{v}]\in \mathbb{P}^{2n-1}(\mathbb{F}_q)} L_{\mathbf{0}, [\mathbf{v}]},
\qquad
F:=\mathbf{1}_S.
\]
Distinct horizontal lines through $\bf0$ intersect only at $\bf0$.
Since each such line has $q$ points and there are $|\mathbb{P}^{2n-1}(\mathbb{F}_q)|$ directions, together with \eqref{size-P^{2n-1}}, we obtain
\[
|S|
=
1+\bigl|\mathbb{P}^{2n-1}(\mathbb{F}_q)\bigr|(q-1)
=
q^{2n}.
\]
Therefore, $\|F\|_{\ell^u(\mathbb{H}_n(\mathbb{F}_q))}=|S|^{\frac{1}{u}}=q^{\frac{2n}{u}}$.
For every direction $[\mathbf{v}]$ we have $L_{\bf0, [\mathbf{v}]}\subset S$, and hence $\mathcal{M}_{\mathbb{H}_n}F([\mathbf{v}])\ge q$.
Thus,
\[
\|\mathcal{M}_{\mathbb{H}_n}F\|_{\ell^v(\mathbb{P}^{2n-1}(\mathbb{F}_q))}
\ge
q\, \bigl|\mathbb{P}^{2n-1}(\mathbb{F}_q)\bigr|^{\frac{1}{v}}
\geq
q^{1+\frac{2n-1}{v}}
\]
by \eqref{eq:Psize}.

This forces $A_n(u,v)\ge 1+\frac{2n-1}{v}-\frac{2n}{u}$.
\end{proof}

\subsection{Proof of Theorem~\ref{thm:exact_Hn}}
\begin{proof}[Proof of Theorem~\ref{thm:exact_Hn}]
Lemmas~\ref{lem:Hn_lower_1}, \ref{lem:Hn_lower_2}, and \ref{lem:Hn_lower_3} give
\[
A_n(u,v)\ge \max\Bigg\{\frac{2n-1}{v},\, 1-\frac1u,\, 1+\frac{2n-1}{v}-\frac{2n}{u}\Bigg\}.
\]

For the matching upper bound,  Lemmas~\ref{lem:Hn_upper_1}-\ref{lem:Hn_upper_3} cover all the pairs $(u,v)$. Therefore,
\[
A_n(u,v)\le \max\Bigg\{\frac{2n-1}{v},\, 1-\frac1u,\, 1+\frac{2n-1}{v}-\frac{2n}{u}\Bigg\},
\]
and the theorem follows.
\end{proof}

\section{Proof of Theorem \ref{thm:sr-l2-bound}}\label{section8}

Recall that $\mathcal L$ is the set of all horizontal lines in $\mathbb H_1(\F_q)$, and $\cL(\omega)$ is the set of all lines in $\cL$ with refined direction $\omega$. 

For $[\bfv] \in \PP^1(\Fq)$, $\omega\in \cD_1$, or $\bfp\in \HH_1(\Fq)$, denote by $\cL([\bfv])$, $\cL(\omega)$ and $\cL(\bfp)$ the set of all lines in $\cL$ with direction $[\bfv]$, with refined direction $\omega$, or through point $\bfp$, respectively.

For any $\omega=[a:b:c]\in \cD_1$, we take any fixed $(x_\omega,y_\omega)\in \Fq^2$ with $x_\omega b-y_\omega a=c$. One checks that 
\begin{equation} \label{eq_q_lines}
\cL(\omega)=\Big\{L_{(x_\omega,y_\omega,t),[a:b]}=\{(x_\omega+sa,y_\omega+sb,t+sc):\, s\in\Fq\}:\,  t\in \Fq\Big\}.
\end{equation}
Let $\pi:\mathbb H_1(\mathbb F_q)\to\mathbb F_q^2$ be the projection defined by $\pi(x,y,t)=(x,y)$. Let $\ell_\omega$ be the affine line on $\Fq^2$ given by
\[
\ell_\omega=\{(x_\omega+sa,y_\omega+sb):\, s\in \Fq\}=\{(x,y)\in \F_q^2:\, bx-ay=c\}.
\]
Then $\pi(L)=\ell_\omega$ for any $L\in \cL(\omega)$. Indeed, the map $\omega\mapsto \ell_\omega$ is bijective from $\cD_1$ to the set of all affine lines in $\Fq^2$. Moreover, the line $\ell_{[a:b:c]}$, with $[a:b:c]\in \cD_1$, is parallel to $[a:b]$. For each $[\bfv]\in \mathbb P^1(\Fq)$, there are exactly $q$ elements $\omega$ of $\cD_1$ such that $\ell_\omega$ is parallel to $[\bfv]$.

For a given choice of the family $\{L_\omega\}_{\omega\in \cD_1}$ with $L_\omega\in \cL(\omega)$, we will use the explicit parametrizations as follows. Consider the disjoint union $\cD_1=\cD_1'\cup \cD_1''$, with 
\[
\cD_1'=\{[1:m:\gamma]:\, m,\gamma \in \Fq\},\quad \cD_1''=\{[0:1:\gamma]:\, \gamma \in \Fq\}.
\]
{ 
For each $\omega=[1:m:\gamma]\in \cD_1^{\prime}$ and each $\tau\in \F_q$, set
\begin{equation} \label{eq_q_lines_1}
L_{\omega,\tau}:=\{(x,mx-\gamma,\tau+\gamma x):\, x\in\Fq\}. 
\end{equation}
Then $\cL(\omega)=\{L_{\omega,\tau}: \tau \in \F_q\}$, so any chosen line $L_\omega \in \cL(\omega)$ takes the form $L_{\omega, \tau_\omega}$ for a unique $\tau_\omega \in \F_q$.

For each $\omega=[0:1:\gamma]\in \cD_1^{\prime\prime}$ and each $\tau\in\F_q$, set
\begin{equation} \label{eq_q_lines_2}
L_{\omega,\tau}:=\{(\gamma,y,\tau+\gamma y):\, y\in\Fq\}. 
\end{equation}
Then $\cL(\omega)=\{L_{\omega,\tau}: \tau\in\F_q\}$, so any chosen line $L_\omega\in\cL(\omega)$ also takes the form $L_{\omega,\tau_\omega}$ for a unique $\tau_\omega\in\F_q$.
}

\begin{proof}[Proof of Theorem~\ref{thm:sr-l2-bound}]

It suffices to prove the claim for non-negative $F$. For each $\omega\in\mathcal D_1$, we choose an $L_\omega\in \cL(\omega)$ such that 
\[
(\mathcal{M}^{\mathrm{rd}}_{\mathbb H_1}F)(\omega)
=
\sum_{\mathbf p\in L_\omega}F(\mathbf p).
\]
For this family $\{L_\omega\}_{\omega\in \cD_1}$, we will use the parametrizations in \eqref{eq_q_lines_1} and \eqref{eq_q_lines_2} later. 

Let $T:\, \ell^2(\mathbb H_1(\Fq))\rightarrow \ell^2(\cD_1)$ be the linear transformation given by 
\[
(Tf)(\omega):=\sum_{\mathbf p\in L_\omega} f(\mathbf p),
\qquad \omega\in\mathcal D_1.
\]
Theorem \ref{thm:sr-l2-bound} will follow if we can prove 
\begin{equation} \label{eq_aim_thm1.7}
\|Tf\|_{\ell^2(\mathcal D_1)}
\leq \, 4q^{\frac{1}{2}}\,\|f\|_{\ell^2(\mathbb H_1(\mathbb F_q))}
\end{equation}
for any $f\in \ell^2(\mathbb H_1(\Fq))$. 

For $(x,y)\in \Fq^2$, we define the Fourier transform in the central variable $t$ by
\[
\widehat f(x,y;\xi):=\sum_{t\in\mathbb F_q} f(x,y,t)\,\chi(-\xi t),
\qquad \xi\in\mathbb F_q,
\]
where $\chi$ is a fixed non-trivial additive character. The Fourier inversion gives
\[
f(x,y,t)=\frac1q\sum_{\xi\in\mathbb F_q}\widehat f(x,y;\xi)\,\chi(\xi t),
\]
Applying the Plancherel theorem in $t$ and summing over all $(x,y)$ implies
\begin{equation} \label{eq_Plancerel_central}
\sum_{x,y\in\mathbb F_q}\sum_{\xi\in\mathbb F_q}
\bigl|\widehat f(x,y;\xi)\bigr|^2
=
q\,\|f\|_{\ell^2(\mathbb H_1(\mathbb F_q))}^2.
\end{equation}

Now for any $\omega\in \cD_1$, 
\begin{align*}
(Tf)(\omega)
=\sum_{(x,y,t)\in L_\omega} f(x,y,t)=\frac1q\sum_{\xi\in\mathbb F_q}\sum_{(x,y,t)\in L_\omega}
\widehat f(x,y;\xi)\,\chi(\xi t)=\sum\limits_{\xi\in \Fq} (T_\xi f)(\omega),
\end{align*}
where $T_\xi$ are linear transformations given by 
\begin{equation} \label{eq_T_xi}
(T_\xi f)(\omega)
:=
\frac1q\sum_{(x,y,t)\in L_\omega}
\widehat f(x,y;\xi)\,\chi(\xi t).
\end{equation}

In the following, we consider 
\[
Tf=T_0 f+T_{\neq 0} f, \quad T_{\neq 0} f=\sum_{\xi\in\mathbb F_q^\ast }T_\xi f.
\]
Our aim is to prove 
\begin{equation} \label{eq_conclusion_T0}
\|T_0 f\|_{\ell^2(\mathcal D_1)}
\leq \sqrt{2} q^{\frac12}\,\|f\|_{\ell^2(\mathbb H_1(\mathbb F_q))},
\end{equation}
and 
\begin{equation} \label{eq_conclusion_Tnot0}
\|T_{\neq 0}f\|_{\ell^2(\mathcal D_1)}
\leq
\sqrt{3} q^{\frac{1}{2}}\,\|f\|_{\ell^2(\mathbb H_1(\mathbb F_q))}.
\end{equation}
Then 
\[
\|Tf\|_{\ell^2(\mathcal D_1)}
\le
\|T_0 f\|_{\ell^2(\mathcal D_1)}+\|T_{\neq 0}f\|_{\ell^2(\mathcal D_1)}
\leq (\sqrt{2}+\sqrt{3}) \,q^{\frac{1}{2}}\,\|f\|_{\ell^2(\mathbb H_1(\mathbb F_q))} \leq 4 \,q^{\frac{1}{2}}\,\|f\|_{\ell^2(\mathbb H_1(\mathbb F_q))}
\]
and \eqref{eq_aim_thm1.7} holds.

\noindent\textbf{Case 1: $\xi=0$.} Write $g_0:\, \Fq^2\rightarrow \mathbb{C}$ by 
\begin{equation} \label{eq_def_g0}
g_0(x,y):=\widehat f(x,y;0)=\sum_{t\in\mathbb F_q} f(x,y,t).
\end{equation}
Then, with $\pi(L_{\omega})=\ell_\omega$, one has
\begin{equation} \label{eq_intog}
(T_0 f)(\omega)
=\frac1q\sum_{(x,y,t)\in L_\omega}\widehat f(x,y;0)
=\frac1q\sum_{(x,y)\in \ell_\omega} g_0(x,y).
\end{equation}

For each slope $[\mathbf v]\in \mathbb P^1(\mathbb F_q)$, let $\mathcal L([\mathbf v])$ be the family of all affine
lines in $\mathbb F_q^2$ with direction $[\mathbf v]$. Then $|\mathcal L([\mathbf v])|=q$, and these families
partition the set of all affine lines in $\mathbb F_q^2$. Moreover, by definition \eqref{eq:Mpl}, for every
$\ell\in\mathcal L([\mathbf v])$ one has
\begin{equation} \label{eq_leqM2g}
\sum_{(x,y)\in \ell} |g_0(x,y)| \le \mathcal M_2(|g_0|)([\mathbf v]).
\end{equation}
Therefore, grouping the $\ell^2(\mathcal D_1)$-sum by slopes, together with inserting \eqref{eq_intog} and \eqref{eq_leqM2g}, gives
\begin{align*}
\|T_0 f\|_{\ell^2(\mathcal D_1)}^2& \leq \sum_{\omega\in\mathcal D_1}\Bigl(\frac{1}{q}\sum_{(x,y)\in \ell_\omega} |g_0(x,y)|\Bigr)^2
\le \frac{1}{q^2}\sum_{[\mathbf v]\in\mathbb P^1(\mathbb F_q)}
\sum_{\ell\in\mathcal L([\mathbf v])}\Bigl(\sum_{(x,y)\in \ell} |g_0(x,y)|\Bigr)^2 \nonumber\\
&\le \frac{1}{q^2}\sum_{[\mathbf v]\in\mathbb P^1(\mathbb F_q)} q\cdot
\bigl(\mathcal M_2(|g_0|)([\mathbf v])\bigr)^2 = \frac{1}{q}\,\|\mathcal M_2(|g_0|)\|_{\ell^2(\mathbb P^1(\mathbb F_q))}^2. 
\end{align*}
Using Lemma \ref{lem:planar-l2}, \eqref{eq_def_g0} and the Cauchy-Schwarz inequality sequentially, we obtain that
\begin{align*}
\|\mathcal M_2(|g_0|)\|_{\ell^2(\mathbb P^1(\mathbb F_q))}^2 &\leq 2q \|g_0\|_{\ell^2(\mathbb F_q^2)}^2= 2q\sum\limits_{(x,y)}\Bigl|\sum\limits_t f(x,y,t)\Bigr|^2\\
&\le 2q\, \sum\limits_{(x,y)} q\sum_t |f(x,y,t)|^2=2q^2\,\|f\|_{\ell^2(\mathbb H_1(\mathbb F_q))}^2.
\end{align*}
Therefore \eqref{eq_conclusion_T0} holds.

\noindent\textbf{Case 2: $\xi\neq 0$.} 
When $\xi\in\mathbb F_q^*$, define, for $\omega=[1:m:\gamma]$,
\[
U_\xi(m,\gamma)
:=
\sum_{x\in\mathbb F_q}
\widehat f(x,mx-\gamma;\xi)\,\chi(\xi\gamma x),
\]
and for $\omega=[0:1:\gamma]$,
\[
U_\xi^\infty(\gamma)
:=
\sum_{y\in\mathbb F_q}
\widehat f(\gamma,y;\xi)\,\chi(\xi\gamma y).
\]

We assume at the moment that for every $\xi\in\mathbb F_q^\ast$,
\begin{equation}\label{eq:key-nonvertical}
\sum_{m\in\mathbb F_q}\sum_{\gamma\in\mathbb F_q}
|U_\xi(m,\gamma)|^2
\leq\,2q
\sum_{x,y\in\mathbb F_q}|\widehat f(x,y;\xi)|^2,
\end{equation}
and
\begin{equation}\label{eq:key-vertical}
\sum_{\gamma\in\mathbb F_q}
|U_\xi^\infty(\gamma)|^2
\le
q
\sum_{x,y\in\mathbb F_q}|\widehat f(x,y;\xi)|^2.
\end{equation}
Inserting the explicit parametrizations in \eqref{eq_q_lines_1} and \eqref{eq_q_lines_2} into \eqref{eq_T_xi}, we obtain
\[
(T_\xi f)(\omega)
=
\frac1q\,\chi(\xi\tau_\omega)\,
\sum_{x\in\mathbb F_q}\widehat f(x,mx-\gamma;\xi)\,\chi(\xi\gamma x)
=\frac1q\,\chi(\xi\tau_\omega)\,U_\xi(m,\gamma),\quad \omega=[1:m:\gamma],
\]
\[
(T_\xi f)(\omega)
=
\frac1q\,\chi(\xi\tau_\omega)\,
\sum_{y\in\mathbb F_q}\widehat f(\gamma,y;\xi)\,\chi(\xi\gamma y)
=\frac1q\,\chi(\xi\tau_\omega)\,U_\xi^\infty(\gamma),\quad \omega=[0:1:\gamma].
\]
Since $|\chi(\xi\tau_\omega)|=1$, we obtain from \eqref{eq:key-nonvertical}--\eqref{eq:key-vertical} that
\begin{equation} \label{eq_l2upperbound}
\|T_\xi f\|_{\ell^2(\mathcal D_1)}^2
=
\sum_{m,\gamma}\Bigl|\frac1q U_\xi(m,\gamma)\Bigr|^2
+
\sum_{\gamma}\Bigl|\frac1q U_\xi^\infty(\gamma)\Bigr|^2
\leq { \frac{3}{q}} \sum_{x,y\in\mathbb F_q}|\widehat f(x,y;\xi)|^2.
\end{equation}
It follows by triangle inequality, Cauchy--Schwarz inequality, \eqref{eq_l2upperbound} and  \eqref{eq_Plancerel_central} that
\begin{align*}
\Bigl\|\sum_{\xi\in\mathbb F_q^*}T_\xi f\Bigr\|_{\ell^2(\mathcal D_1)}^2
&\le
\Big(\sum_{\xi\in\mathbb F_q^*}\|T_\xi f\|_{\ell^2(\mathcal D_1)}\Big)^2
\le
q  \sum_{\xi\in\mathbb F_q^*}\|T_\xi f\|_{\ell^2(\mathcal D_1)}^2\\
& \leq { 3}\sum_{\xi\in\mathbb F_q^*}\sum_{x,y\in\mathbb F_q}|\widehat f(x,y;\xi)|^2 \leq 
{ 3}q\,\|f\|_{\ell^2(\mathbb H_1(\mathbb F_q))}^2.
\end{align*}
Then \eqref{eq_conclusion_Tnot0} follows, and Theorem \ref{thm:sr-l2-bound} holds.

\medskip

In the rest of the proof, we shall prove \eqref{eq:key-nonvertical} and \eqref{eq:key-vertical}. Fix $\xi\neq 0$, and write
\[
h_m(\gamma):=U_\xi(m,\gamma)=\sum_{x\in\mathbb F_q}
\widehat f(x,mx-\gamma;\xi)\,\chi(\xi\gamma x).
\]
With the standard Fourier transform in $\gamma$, i.e., 
\[
\widehat h_m(\rho)
=
\sum_{\gamma} h_m(\gamma)\chi(-\rho \gamma), \qquad \rho\in \Fq,
\]
the Plancherel theorem gives
\begin{equation} \label{eq_standard_Plan}
\sum_{\gamma}|h_m(\gamma)|^2
=
\frac1q\sum_{\rho\in\mathbb F_q}|\widehat h_m(\rho)|^2.
\end{equation}
By making the change of variables $y=mx-\gamma$, we obtain
\[
\widehat h_m(\rho)
=
\sum_{\gamma,x}
\widehat f(x,mx-\gamma;\xi)\,
\chi\!\left((\xi x-\rho)\gamma \right)
=
\sum_{x,y}
\widehat f(x,y;\xi)\,
\chi\!\left((\xi x-\rho)(mx-y)\right).
\]
Define
\[
G_\rho(x):=\sum_{y}\widehat f(x,y;\xi)\,
\chi\!\left(-(\xi x-\rho)y\right),
\qquad
Q_\rho(x):=(\xi x-\rho)x.
\]
Then
\[
\widehat h_m(\rho)
=
\sum_{x} G_\rho(x)\,\chi\!\bigl(mQ_\rho(x)\bigr).
\]
Summing in $m$, we have
\begin{align}
\sum_{m}|\widehat h_m(\rho)|^2
&= \sum\limits_{x,x'}G_\rho(x)\overline{G_\rho(x')}\sum\limits_{m} \chi(m(Q_\rho(x)-Q_\rho(x'))) \nonumber\\
&= q\sum\limits_{x,x' \atop Q_\rho(x)=Q_\rho(x')}G_\rho(x)\overline{G_\rho(x')} =
q\sum_{t\in\mathbb F_q}
\Bigl|\sum_{x:\, Q_\rho(x)=t} G_\rho(x)\Bigr|^2. \nonumber 
\end{align}
Since $\xi\neq 0$, for any given $t$ the equation $Q_\rho(x)=t$ has at most $2$ solutions. 
Therefore,
\begin{equation} \label{eq_restrict}
\sum_{m}|\widehat h_m(\rho)|^2
\le
2q\sum_{x}|G_\rho(x)|^2.
\end{equation}
Moreover, for any given $x$, we have 
\begin{equation} \label{eq_G_v2}
\sum\limits_{\rho}|G_\rho(x)|^2 = \sum\limits_{y,y'}\widehat{f}(x,y;\xi)\overline{\widehat{f}(x,y';\xi)}\chi(-\xi x(y-y'))\sum\limits_{\rho}\chi(\rho(y-y')) = q \sum\limits_{y} |\widehat{f}(x,y;\xi)|^2.
\end{equation}
Now, combining \eqref{eq_standard_Plan}-\eqref{eq_G_v2} implies
\[
\sum_{m,\gamma}|U_\xi(m,\gamma)|^2=\sum_{m,\gamma}|h_m(\gamma)|^2
=
\frac1q\sum_{m, \rho\in\mathbb F_q}|\widehat h_m(\rho)|^2\leq 2 \sum\limits_{\rho,x}|G_\rho(x)|^2 =
2q\sum_{x,y}|\widehat f(x,y;\xi)|^2,
\]
which proves \eqref{eq:key-nonvertical}.

For the bound \eqref{eq:key-vertical}, one sees by the Cauchy-Schwarz inequality that 
\[
|U_\xi^\infty(\gamma)|^2=\Big|\sum\limits_{y\in \F_q}\widehat{f}(\gamma,y;\xi)\chi(\xi\gamma y)\Big|^2\leq q\sum\limits_{y\in \F_q}|\widehat{f}(\gamma,y;\xi)|^2.
\]
Summing in $\gamma$ yields \eqref{eq:key-vertical}. 
\end{proof}

Regarding the sharpness of Theorem~\ref{thm:sr-l2-bound}, let $\mathbf 0:=(0,0,0)\in \mathbb H_1(\mathbb F_q)$ and let $F:=\delta_{\mathbf 0}$. Then, $\|F\|_{\ell^2(\mathbb H_1(\mathbb F_q))}=1$.

For each direction $[\mathbf{v}]=[a:b]\in \mathbb P^1(\mathbb F_q)$, consider the horizontal line
$L_{\mathbf 0,[a:b]}$ through $\mathbf 0$ with direction $[a:b]$.
Since $\mathbf 0\in L_{\mathbf 0,[a:b]}$, we have
\[
\sum_{\bfp\in L_{\mathbf 0,[a:b]}} |F(\bfp)| = 1.
\]
{ 
Moreover, by the direct definition of refined direction, using the base point \(\mathbf 0=(0,0,0)\) gives
\[
\mathrm{Dir}\!\left(L_{\mathbf 0,[a:b]}\right) = [a:b:0]\in \mathcal D_1.
\]
}
Therefore, for $\omega\in\mathcal D_1$,
\[
\mathcal{M}^{\mathrm{rd}}_{\mathbb H_1}F(\omega)
=
\begin{cases}
1, & \text{if }\omega=[a:b:0]\text{ for some }[a:b]\in\mathbb P^1(\mathbb F_q),\\
0, & \text{otherwise}.
\end{cases}
\]
Since the set $\{[a:b:0] : [a:b]\in\mathbb P^1(\mathbb F_q)\}$ has cardinality
$|\mathbb P^1(\mathbb F_q)|=q+1$, it follows that
\[
\|\mathcal{M}^{\mathrm{rd}}_{\mathbb H_1}F\|_{\ell^2(\mathcal D_1)}^2
=
\sum_{\omega\in\mathcal D_1}\bigl|\mathcal{M}^{\mathrm{rd}}_{\mathbb H_1}F(\omega)\bigr|^2
=
q+1,
\]
and hence
\[
\|\mathcal{M}^{\mathrm{rd}}_{\mathbb H_1}F\|_{\ell^2(\mathcal D_1)}
=
(q+1)^{\frac{1}{2}}
\sim q^{\frac{1}{2}}.
\]
Consequently, any inequality of the form
$\|\mathcal{M}^{\mathrm{rd}}_{\mathbb H_1}F\|_{\ell^2(\mathcal D_1)}
\le C q^{\alpha}\|F\|_{\ell^2(\mathbb H_1(\mathbb F_q))}$
forces $\alpha\ge \tfrac12$, proving that the exponent $q^{\frac{1}{2}}$ in
Theorem~\ref{thm:sr-l2-bound} is sharp.

\section{Proof of Theorem \ref{thm:exact-rd}}\label{section9}

For each $\omega\in\mathcal D_1$, fix a horizontal line $L_\omega\in \cL(\omega)$,
and define the associated linear operator
\[
(Tf)(\omega):=\sum_{\mathbf p\in L_\omega} f(\mathbf p),
\qquad \omega\in\mathcal D_1.
\]
By the standard linearization argument used previously, for each given $F$ one can choose $\{L_\omega\}_\omega$ so that
$\mathcal{M}^{\mathrm{rd}}_{\mathbb H_1}F = T(|F|)$ pointwise on $\mathcal D_1$.
Consequently, it suffices to prove operator norm bounds for $T$ that are uniform over all choices of $\{L_\omega\}$.

We begin by recording the basic endpoint bounds for any linearization $T$ of 
$\mathcal{M}^{\mathrm{rd}}_{\mathbb{H}_1}$. These follow directly from the fact 
that each line $L_\omega$ has exactly $q$ points and is contained in 
$\mathbb{H}_1(\mathbb{F}_q)$, together with a counting argument on the number of 
refined directions passing through a given point.
\begin{lemma}\label{lem:rd-endpoints}
Let $T$ be any linearization as above. Then for every $f:\mathbb H_1(\mathbb F_q)\to\mathbb C$ one has
\begin{align}
\|Tf\|_{\ell^\infty(\mathcal D_1)} &\le \|f\|_{\ell^1(\mathbb H_1(\mathbb F_q))},\label{eq:rd-endpoint-1infty}\\
\|Tf\|_{\ell^\infty(\mathcal D_1)} &\le q\,\|f\|_{\ell^\infty(\mathbb H_1(\mathbb F_q))},\label{eq:rd-endpoint-inftyinfty}\\
\|Tf\|_{\ell^1(\mathcal D_1)} &\le (q+1)\,\|f\|_{\ell^1(\mathbb H_1(\mathbb F_q))}.\label{eq:rd-endpoint-11}
\end{align}
\end{lemma}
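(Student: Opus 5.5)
The three bounds are elementary, and the plan is to prove each directly from the definition of $T$. We use the fact that each $L_\omega$ has exactly $q$ points, together with a count of how many of the chosen lines pass through a fixed point.

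For \eqref{eq:rd-endpoint-1infty}, we fix $\omega\in\mathcal D_1$ and bound
\[
|(Tf)(\omega)|\le\sum_{\mathbf p\in L_\omega}|f(\mathbf p)|\le\sum_{\mathbf p\in\mathbb H_1(\mathbb F_q)}|f(\mathbf p)|=\|f\|_{\ell^1(\mathbb H_1(\mathbb F_q))},
\]
and then take the maximum over $\omega$. For \eqref{eq:rd-endpoint-inftyinfty}, we instead bound each term by $\|f\|_{\ell^\infty}$. Since $|L_\omega|=q$, this gives $|(Tf)(\omega)|\le q\|f\|_{\ell^\infty(\mathbb H_1(\mathbb F_q))}$ for every $\omega$.

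The only step with content is \eqref{eq:rd-endpoint-11}. Summing over $\omega$ and exchanging the order of summation gives
\[
\|Tf\|_{\ell^1(\mathcal D_1)}\le\sum_{\mathbf p\in\mathbb H_1(\mathbb F_q)}|f(\mathbf p)|\cdot\#\{\omega\in\mathcal D_1:\ \mathbf p\in L_\omega\}.
\]
It therefore suffices to show that each point lies on at most $q+1$ of the chosen lines. The map $\omega\mapsto L_\omega$ is injective, because distinct refined directions give distinct lines and $\mathrm{Dir}$ is well defined. Hence the set $\{\omega:\mathbf p\in L_\omega\}$ injects into $\mathcal L(\mathbf p)$, the set of horizontal lines through $\mathbf p$. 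By the count in Section~\ref{section2}, $|\mathcal L(\mathbf p)|=q+1$. This yields the factor $q+1$.

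Alternatively, one can observe that lines in $\mathcal L(\mathbf p)$ have pairwise distinct spatial directions, so at most one chosen line through $\mathbf p$ occurs for each $[\mathbf v]\in\mathbb P^1(\mathbb F_q)$. Indeed, the refined direction of a line through $\mathbf p$ with spatial direction $[a:b]$ is determined by $\mathbf p$ and $[a:b]$, namely $[a:b:x_0b-y_0a]$.

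No step presents a real obstacle. The only point requiring care is the injectivity and counting argument in \eqref{eq:rd-endpoint-11}, which rests on the already established facts that $|\mathcal L(\mathbf p)|=q+1$ and that the lines through a point have distinct refined directions.
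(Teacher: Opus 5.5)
Your proposal is correct and follows the paper's proof. The first two bounds come from $L_\omega\subset\mathbb H_1(\mathbb F_q)$ and $|L_\omega|=q$, and the third comes from exchanging sums and bounding the multiplicity $\#\{\omega:\mathbf p\in L_\omega\}\le q+1$. The only cosmetic difference is that the paper counts the affine lines $\ell_\omega=\pi(L_\omega)$ through $\pi(\mathbf p)$ in $\mathbb F_q^2$, using injectivity of $\omega\mapsto\ell_\omega$, whereas you count horizontal lines through $\mathbf p$ directly via $|\mathcal L(\mathbf p)|=q+1$; both are valid.
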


\begin{proof}
The bounds \eqref{eq:rd-endpoint-1infty} and \eqref{eq:rd-endpoint-inftyinfty} follow from
$L_\omega\subset \mathbb H_1(\mathbb F_q)$ and $|L_\omega|=q$.

For \eqref{eq:rd-endpoint-11}, we may assume $f\ge 0$.
Then
\[
\|Tf\|_{\ell^1(\mathcal D_1)}
=
\sum_{\omega\in\mathcal D_1}\ \sum_{\mathbf p\in L_\omega} f(\mathbf p)
=
\sum_{\mathbf p\in\mathbb H_1(\mathbb F_q)} f(\mathbf p)\,N(\mathbf p),
\]
where $N(\mathbf p):=\#\{\omega\in\mathcal D_1:\ \mathbf p\in L_\omega\}$.
If $\mathbf p=(x,y,t)\in L_\omega$, then $(x,y)\in \pi(L_\omega)=\ell_\omega$.
Distinct refined directions $\omega$ correspond to distinct affine lines $\ell_\omega\subset\mathbb F_q^2$,
and there are exactly $q+1$ affine lines through a given point $(x,y)\in\mathbb F_q^2$.
Hence $N(\mathbf p)\le q+1$, proving \eqref{eq:rd-endpoint-11}.
\end{proof}

From the endpoint bounds in Lemma \ref{lem:rd-endpoints} and the sharp $\ell^2$ 
estimate of Theorem \ref{thm:sr-l2-bound}, we can now derive diagonal bounds for 
$\mathcal{M}^{\mathrm{rd}}_{\mathbb{H}_1}$ across the full range of exponents 
$1 \le u \le \infty$ by interpolation. The exponent $\sigma(u)$ reflects the 
two natural ranges: for $1 \le u \le 2$ one interpolates between the $\ell^1 
\to \ell^1$ and $\ell^2 \to \ell^2$ estimates, while for $2 \le u \le \infty$ 
one interpolates between the $\ell^2 \to \ell^2$ and $\ell^\infty \to \ell^\infty$ 
estimates.
\begin{lemma}\label{lem:rd-diagonal}
For every $1\le u\le\infty$ there exists a constant $C_u$ (independent of $q$) such that for all
$F:\mathbb H_1(\mathbb F_q)\to\mathbb C$,
\begin{equation}\label{eq:rd-diagonal}
\|\mathcal{M}^{\mathrm{rd}}_{\mathbb H_1}F\|_{\ell^u(\mathcal D_1)}
\le
C_u\,q^{\sigma(u)}\,\|F\|_{\ell^u(\mathbb H_1(\mathbb F_q))},
\end{equation}
where
\[
\sigma(u):=
\begin{cases}
\frac{1}{u}, & 1\le u\le 2,\\[0.3em]
1-\frac{1}{u}, & 2\le u\le\infty,
\end{cases}
\] 
and 
\[
C_u:=
\begin{cases}
2^{1-\theta(u)} \cdot  5^{\theta(u)}, & 1\le u\le 2,\\[0.3em]
5^{\frac{2}{u}} , & 2\le u\le\infty,
\end{cases}
\]
where $\theta(u)=2\big(1-\frac{1}{u} \big)$, $1 \leq u \leq 2$.
\end{lemma}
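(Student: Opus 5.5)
We reduce to a uniform bound on linearizations. By the linearization described at the start of this section, for each $F$ there is a choice of lines $\{L_\omega\}$ with $\mathcal{M}^{\mathrm{rd}}_{\mathbb H_1}F=T(|F|)$ pointwise, and $\||F|\|_{\ell^u}=\|F\|_{\ell^u}$. It therefore suffices to prove
\[
\|Tf\|_{\ell^u(\mathcal D_1)}\le C_u\,q^{\sigma(u)}\|f\|_{\ell^u(\mathbb H_1(\mathbb F_q))}
\]
for every linear operator $T$ arising from some family $\{L_\omega\}$, with $C_u$ independent of the family. We then interpolate this linear $T$ using Lemma~\ref{interpolation-inequality}.

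The three endpoints we need are as follows.
\begin{itemize}
\item At $u=1$, \eqref{eq:rd-endpoint-11} gives $\|Tf\|_{\ell^1}\le (q+1)\|f\|_{\ell^1}\le 2q\|f\|_{\ell^1}$.
\item At $u=\infty$, \eqref{eq:rd-endpoint-inftyinfty} gives $\|Tf\|_{\ell^\infty}\le q\|f\|_{\ell^\infty}$.
\item At $u=2$, the proof of Theorem~\ref{thm:sr-l2-bound} gives \eqref{eq_aim_thm1.7}, namely $\|Tf\|_{\ell^2}\le(\sqrt2+\sqrt3)q^{1/2}\|f\|_{\ell^2}\le 5q^{1/2}\|f\|_{\ell^2}$.
\end{itemize}
The $\ell^2$ endpoint needs one check. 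In that proof the family $\{L_\omega\}$ was chosen to be maximizing, but the argument only used that $L_\omega\in\mathcal L(\omega)$ takes the form $L_{\omega,\tau_\omega}$, and the factor $\chi(\xi\tau_\omega)$ was discarded because it has modulus one. Hence the bound holds for arbitrary families and arbitrary complex $f$. I expect this to be the only point requiring care.

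For $1\le u\le2$, set $\theta=\theta(u)=2(1-\frac1u)$, so that $\frac1u=(1-\theta)\cdot1+\theta\cdot\frac12$. Interpolating the $\ell^1\to\ell^1$ and $\ell^2\to\ell^2$ endpoints gives the constant
\[
(2q)^{1-\theta}\bigl(5q^{1/2}\bigr)^{\theta}=2^{1-\theta}5^{\theta}\,q^{1-\theta/2}.
\]
Since $1-\theta/2=\frac1u$, this is exactly $C_u q^{1/u}$.

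For $2\le u\le\infty$, set $\theta=1-\frac2u$, so that $\frac1u=(1-\theta)\cdot\frac12+\theta\cdot 0$. Interpolating the $\ell^2\to\ell^2$ and $\ell^\infty\to\ell^\infty$ endpoints gives the constant
\[
\bigl(5q^{1/2}\bigr)^{2/u}q^{1-2/u}=5^{2/u}q^{1-1/u}.
\]
This is exactly $C_u q^{\sigma(u)}$.

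Applying these bounds to $f=|F|$ with the maximizing family yields \eqref{eq:rd-diagonal}. There is no real obstacle beyond confirming the uniformity of the $\ell^2$ bound over families; the exponent and constant bookkeeping is routine.
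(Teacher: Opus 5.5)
Your proposal is correct and matches the paper's proof: linearize, then apply Riesz--Thorin between the $\ell^1\to\ell^1$ bound $2q$ and the $\ell^2\to\ell^2$ bound $5q^{1/2}$ for $1\le u\le 2$, and between $5q^{1/2}$ and the $\ell^\infty\to\ell^\infty$ bound $q$ for $2\le u\le\infty$, with the same constants. The paper simply cites Theorem~\ref{thm:sr-l2-bound} for the $\ell^2$ bound; your check that it holds uniformly over all families $\{L_\omega\}$ fills in a point the paper leaves implicit, since $\tau_\omega$ does not appear in the $\xi=0$ term and enters the $\xi\neq0$ terms only through the unimodular factor $\chi(\xi\tau_\omega)$.
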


\begin{proof}
By linearization, it suffices to prove \eqref{eq:rd-diagonal} for $T$ uniformly.

If $1\le u\le 2$, interpolate between 
$\|T\|_{\ell^1\to\ell^1}\le q+1 \leq 2q$ from
\eqref{eq:rd-endpoint-11} and $\|T\|_{\ell^2\to\ell^2}\leq 5 q^{\frac{1}{2}}$
from Theorem~\ref{thm:sr-l2-bound}. Choose $\theta\in[0,1]$ so that
\[
\frac{1}{u}=(1-\theta)\cdot 1+\theta\cdot\frac{1}{2},
\qquad\text{equivalently }\ \theta=2\Bigl(1-\frac{1}{u}\Bigr).
\]
The Riesz--Thorin theorem gives 
\begin{align*}
    \|T\|_{\ell^u\to\ell^u}\leq 2^{1-\theta} \cdot 5^{\theta} \, q^{1-\frac{\theta}{2}}= 2^{1-\theta} \cdot 5^{\theta} \, q^{\frac{1}{u}}.
\end{align*}
If $2\le u\le\infty$, interpolate between $\|T\|_{\ell^2\to\ell^2}\leq 5 q^{\frac{1}{2}}$ and
$\|T\|_{\ell^\infty\to\ell^\infty}\le q$ from \eqref{eq:rd-endpoint-inftyinfty}.
Choose $\theta\in[0,1]$ so that
\[
\frac{1}{u}=(1-\theta)\cdot\frac{1}{2}+\theta\cdot 0,
\qquad\text{equivalently }\ \theta=1-\frac{2}{u}.
\]
Then, 
\begin{align*}
    \|T\|_{\ell^u\to\ell^u}\leq 5^{1-\theta} q^{\frac{1-\theta}{2}+\theta}=5^{\frac{2}{u}} \, q^{1-\frac{1}{u}}.
\end{align*}
This proves \eqref{eq:rd-diagonal}.
\end{proof}

\subsection{The region $1\le u\le 2$ -- upper bounds}
\begin{lemma}\label{lem:rd-upper-u-le-2}
Let $1\le u\le 2$ and $C_u$ be given in Lemma \ref{lem:rd-diagonal}.
\begin{enumerate}[label=\emph{(\alph*)},itemsep=0.3em]
\item If $1\le v\le u$, then
\[
\|\mathcal{M}^{\mathrm{rd}}_{\mathbb H_1}F\|_{\ell^v(\mathcal D_1)}
\leq C_u \cdot 2^{\frac{1}{v}-\frac{1}{u}} \,
q^{\frac{2}{v}-\frac{1}{u}}\,
\|F\|_{\ell^u(\mathbb H_1(\mathbb F_q))}.
\]
\item If $u\le v\le \frac{u}{u-1}$ (with the convention $\frac{u}{u-1}=\infty$ when $u=1$), then
\[
\|\mathcal{M}^{\mathrm{rd}}_{\mathbb H_1}F\|_{\ell^v(\mathcal D_1)}
\leq 2^{\frac{1}{v}-1+\frac{1}{u}} \cdot 5^{2(1-\frac{1}{u})} \, 
q^{\frac{1}{v}}\,
\|F\|_{\ell^u(\mathbb H_1(\mathbb F_q))}.
\]
\item If $v\ge \frac{u}{u-1}$, then
\[
\|\mathcal{M}^{\mathrm{rd}}_{\mathbb H_1}F\|_{\ell^v(\mathcal D_1)}
\leq 5^{2(1-\frac{1}{u})} \,
q^{1-\frac{1}{u}}\,
\|F\|_{\ell^u(\mathbb H_1(\mathbb F_q))}.
\]
\end{enumerate}
\end{lemma}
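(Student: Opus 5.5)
By linearization it suffices to prove each estimate for a fixed linearization $T$, uniformly over the choice of lines $\{L_\omega\}$. I will use three inputs. The first is the embedding Lemma~\ref{lem:embed} on $\mathcal D_1$, where $|\mathcal D_1|=q^2+q\le 2q^2$. The second is the diagonal bound of Lemma~\ref{lem:rd-diagonal} with $\sigma(u)=1/u$ for $u\le 2$. The third is interpolation between the endpoint \eqref{eq:rd-endpoint-1infty} and the $\ell^2$ estimate. In that interpolation I take the $\ell^2$ constant to be $5q^{1/2}$, as in the proof of Lemma~\ref{lem:rd-diagonal}; this is a safe enlargement of the constant $4$ proved in Theorem~\ref{thm:sr-l2-bound}.

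For (a), with $v\le u$, I apply Lemma~\ref{lem:embed} to get $\|Tf\|_{\ell^v}\le |\mathcal D_1|^{\frac1v-\frac1u}\|Tf\|_{\ell^u}$. I then bound $\|Tf\|_{\ell^u}\le C_u q^{1/u}\|f\|_{\ell^u}$ by Lemma~\ref{lem:rd-diagonal}. Since $|\mathcal D_1|\le 2q^2$, the total exponent is $2(\frac1v-\frac1u)+\frac1u=\frac2v-\frac1u$, and the constant is $C_u 2^{\frac1v-\frac1u}$, exactly as stated.

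For (c), I interpolate between $\|T\|_{\ell^1\to\ell^\infty}\le 1$ and $\|T\|_{\ell^2\to\ell^2}\le 5q^{1/2}$ with $\theta=2(1-\frac1u)$. This gives $\|T\|_{\ell^u\to\ell^{u/(u-1)}}\le 5^{\theta}q^{\theta/2}=5^{2(1-\frac1u)}q^{1-\frac1u}$. Monotonicity of $\ell^p$ norms then handles every $v\ge \frac{u}{u-1}$. This is the same argument as Lemma~\ref{thm:lower-right-1}.

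For (b), I follow Lemma~\ref{thm:lower-right-2}, keeping the same $\theta=2(1-\frac1u)$. I define $r$ by $\frac1v=\frac{1-\theta}{r}+\frac\theta2$. This is admissible, that is $0\le\frac1r\le1$, precisely when $u\le v\le\frac{u}{u-1}$. The first endpoint is $\|T\|_{\ell^1\to\ell^r}\le |\mathcal D_1|^{1/r}\le (2q^2)^{1/r}$, which comes from \eqref{eq:rd-endpoint-1infty} and Lemma~\ref{lem:embed}. Interpolating this with the $\ell^2$ bound, the $q$-exponent is $\frac{2(1-\theta)}{r}+\frac\theta2$. The main thing to check is that this equals $\frac1v$, and at first sight it does not: the factor $2$ on $\frac{1-\theta}{r}$ produces $2\bigl(\frac1v-\frac\theta2\bigr)+\frac\theta2=\frac2v-\frac\theta2$. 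That equals $\frac1v$ only when $\frac1v=1-\frac1u$.

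So the crude $\ell^1\to\ell^r$ endpoint loses too much, and I need a sharper one. I would replace it with $\|T\|_{\ell^1\to\ell^1}\le 2q$ from \eqref{eq:rd-endpoint-11}, and interpolate once more against \eqref{eq:rd-endpoint-1infty}. This gives $\|T\|_{\ell^1\to\ell^r}\le (2q)^{1/r}$, since for $f=\delta_{\mathbf p}$ the function $Tf$ is supported on at most $q+1$ directions with value at most $1$. With this endpoint the exponent becomes $\frac{1-\theta}{r}+\frac\theta2=\frac1v$, as required.

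The constant is then $(2^{1/r})^{1-\theta}5^\theta=2^{\frac1v-\frac\theta2}5^{\theta}=2^{\frac1v-1+\frac1u}5^{2(1-\frac1u)}$, which matches the stated bound in (b). The step that actually requires care is this choice of endpoint: one must use the $q^{1/r}$ growth coming from the point-mass structure of the endpoint bounds, rather than the size $q^{2/r}$ of $\mathcal D_1$.
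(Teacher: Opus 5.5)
Your proposal is correct and follows the paper's proof. In (b) you discard the crude $|\mathcal D_1|^{1/r}$ endpoint and use the $\ell^1\to\ell^r$ bound $(q+1)^{1/r}\le (2q)^{1/r}$ obtained from \eqref{eq:rd-endpoint-11} and \eqref{eq:rd-endpoint-1infty}; after that, (a), (b) and (c) match the paper's arguments step by step, with the same $\theta=2(1-\frac1u)$, the same choice of $r$, and the same constants.
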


\begin{proof}
By linearization it suffices to prove the bounds for $T$ uniformly.

\smallskip
\noindent\textbf{(a)}
Since $v\le u$, Lemma~\ref{lem:embed} gives
\[
\|Tf\|_{\ell^v(\mathcal D_1)}
\le
|\mathcal D_1|^{\frac{1}{v}-\frac{1}{u}}\,
\|Tf\|_{\ell^u(\mathcal D_1)}
\leq 2^{\frac{1}{v}-\frac{1}{u}}
q^{2(\frac{1}{v}-\frac{1}{u})}\,\|Tf\|_{\ell^u(\mathcal D_1)}.
\]
Apply Lemma~\ref{lem:rd-diagonal} with $\sigma(u)=\frac{1}{u}$ to obtain
\[
\|Tf\|_{\ell^v(\mathcal D_1)}
\leq C_u \cdot  2^{\frac{1}{v}-\frac{1}{u}} \, 
q^{2(\frac{1}{v}-\frac{1}{u})}\,q^{\frac{1}{u}}\,\|f\|_{\ell^u(\mathbb H_1(\mathbb F_q))}
=C_u \cdot  2^{\frac{1}{v}-\frac{1}{u}} \,
q^{\frac{2}{v}-\frac{1}{u}}\,\|f\|_{\ell^u(\mathbb H_1(\mathbb F_q))}.
\]

\smallskip
\noindent\textbf{(b)}
Interpolating \eqref{eq:rd-endpoint-11} and \eqref{eq:rd-endpoint-1infty} implies, for every $1\le r\le\infty$,
\begin{equation}\label{eq:rd-1-to-r}
\|Tf\|_{\ell^r(\mathcal D_1)}\le (q+1)^{\frac{1}{r}}\,\|f\|_{\ell^1(\mathbb H_1(\mathbb F_q))}\leq 2^{\frac{1}{r}}  q^{\frac{1}{r}}\,\|f\|_{\ell^1(\mathbb H_1(\mathbb F_q))}.
\end{equation}
Assume first that $u\in(1,2]$.
Choose $\theta\in[0,1]$ so that
\[
\frac{1}{u}=(1-\theta)\cdot 1+\theta\cdot\frac{1}{2},
\qquad\text{so }\ \theta=2\Bigl(1-\frac{1}{u}\Bigr).
\]
The condition $u\le v\le \frac{u}{u-1}$ is equivalent to
\[
\frac{1}{u}\ \ge\ \frac{1}{v}\ \ge\ 1-\frac{1}{u}\ =\ \frac{\theta}{2}.
\]
Hence we may choose $r\in[1,\infty]$ such that
\[
\frac{1}{v}=(1-\theta)\cdot\frac{1}{r}+\theta\cdot\frac{1}{2}.
\]
Interpolating between \eqref{eq:rd-1-to-r} and the $\ell^2\to\ell^2$ estimate from
Theorem~\ref{thm:sr-l2-bound} gives
\begin{align*}
    \|Tf\|_{\ell^v(\mathcal D_1)} \leq & 
(2^{\frac{1}{r}} q^{\frac{1}{r}})^{1-\theta}\,(5 q^{\frac{1}{2}})^\theta\,\|f\|_{\ell^u(\mathbb H_1(\mathbb F_q))}
=2^{\frac{1-\theta}{r}} \cdot 5^{\theta} \,
q^{\frac{1-\theta}{r}+\frac{\theta}{2}}\,\|f\|_{\ell^u(\mathbb H_1(\mathbb F_q))} \\
=& 2^{\frac{1}{v}-1+\frac{1}{u}} \cdot 5^{2(1-\frac{1}{u})} \,
q^{\frac{1}{v}}\,\|f\|_{\ell^u(\mathbb H_1(\mathbb F_q))}.
\end{align*}
The case $u=1$ follows directly from \eqref{eq:rd-1-to-r} with $r=v$.

\smallskip
\noindent\textbf{(c)}
Set $v_0:=\frac{u}{u-1}$, so that $\frac{1}{v_0}=1-\frac{1}{u}$.
Interpolating between $\ell^1\to\ell^\infty$ (norm $\le 1$, \eqref{eq:rd-endpoint-1infty})
and $\ell^2\to\ell^2$ (norm $\leq 5 q^{\frac{1}{2}}$, Theorem~\ref{thm:sr-l2-bound}) with the same
$\theta=2(1-\frac{1}{u})$ implies
\[
\|Tf\|_{\ell^{v_0}(\mathcal D_1)}\leq 5^{\theta} q^{\frac{\theta}{2}}\,\|f\|_{\ell^u(\mathbb H_1(\mathbb F_q))}=5^{2(1-\frac{1}{u})} \, q^{1-\frac{1}{u}}\,\|f\|_{\ell^u(\mathbb H_1(\mathbb F_q))}.
\]
If $v\ge v_0$, then Lemma~\ref{lem:embed} gives
$\|Tf\|_{\ell^v(\mathcal D_1)}\le \|Tf\|_{\ell^{v_0}(\mathcal D_1)}$, proving (c).
\end{proof}

\subsection{The region $2\le u\le \infty$ -- upper bounds}
\begin{lemma}\label{lem:rd-upper-u-ge-2}
Let $2\le u\le\infty$.
\begin{enumerate}[label=\emph{(\alph*)},itemsep=0.3em]
\item If $u\le v\le\infty$, then
\[
\|\mathcal{M}^{\mathrm{rd}}_{\mathbb H_1}F\|_{\ell^v(\mathcal D_1)}
\leq 5^{\frac{2}{u}} \,
q^{1-\frac{1}{u}}\,
\|F\|_{\ell^u(\mathbb H_1(\mathbb F_q))}.
\]
\item If $1\le v\le u$, then
\[
\|\mathcal{M}^{\mathrm{rd}}_{\mathbb H_1}F\|_{\ell^v(\mathcal D_1)}
\leq  2^{\frac{1}{v}-\frac{1}{u}} \cdot 5^{\frac{2}{u}} \, 
q^{1+\frac{2}{v}-\frac{3}{u}}\,
\|F\|_{\ell^u(\mathbb H_1(\mathbb F_q))}.
\]
\end{enumerate}
\end{lemma}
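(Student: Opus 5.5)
As with the preceding lemmas, we first linearize: for a given $F$, choose lines $\{L_\omega\}$ so that $\mathcal{M}^{\mathrm{rd}}_{\mathbb H_1}F=T(|F|)$. It then suffices to bound the associated operator $T$, uniformly over all choices of line families. Both parts will follow from the diagonal estimate of Lemma~\ref{lem:rd-diagonal} in the range $2\le u\le\infty$, namely
\[
\|Tf\|_{\ell^u(\mathcal D_1)}\le 5^{\frac{2}{u}}\,q^{1-\frac1u}\,\|f\|_{\ell^u(\mathbb H_1(\mathbb F_q))},
\]
combined with the $\ell^p$ embeddings of Lemma~\ref{lem:embed} on the finite set $\mathcal D_1$. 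No new interpolation is needed.

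\textbf{Part (a).} Here $u\le v$, so the first inequality of Lemma~\ref{lem:embed} gives $\|Tf\|_{\ell^v(\mathcal D_1)}\le\|Tf\|_{\ell^u(\mathcal D_1)}$. Inserting the diagonal bound above yields the claim with constant $5^{\frac{2}{u}}$.

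\textbf{Part (b).} Here $v\le u$, so the second inequality of Lemma~\ref{lem:embed} gives
\[
\|Tf\|_{\ell^v(\mathcal D_1)}\le|\mathcal D_1|^{\frac1v-\frac1u}\,\|Tf\|_{\ell^u(\mathcal D_1)}.
\]
We have $|\mathcal D_1|=q^2+q\le 2q^2$, so the prefactor is at most $2^{\frac1v-\frac1u}q^{\frac2v-\frac2u}$. Multiplying by the diagonal bound gives the exponent
\[
\frac2v-\frac2u+1-\frac1u=1+\frac2v-\frac3u,
\]
with constant $2^{\frac1v-\frac1u}\cdot5^{\frac2u}$, exactly as stated.

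The only point requiring care is the constant in the diagonal lemma. That lemma was stated with $\|T\|_{\ell^2\to\ell^2}\le 5q^{1/2}$, while the proof of Theorem~\ref{thm:sr-l2-bound} actually gives $4q^{1/2}$. This is harmless, since $4\le5$. One should also confirm that the case $u=\infty$ is covered: there $5^{2/u}=1$, and the bound reduces to the trivial $\ell^\infty\to\ell^\infty$ estimate \eqref{eq:rd-endpoint-inftyinfty}. There is no genuine obstacle; the lemma is a bookkeeping consequence of Lemmas~\ref{lem:rd-diagonal} and~\ref{lem:embed}.
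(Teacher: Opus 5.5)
Your proposal is correct and matches the paper's proof. Both combine the diagonal bound of Lemma~\ref{lem:rd-diagonal} for $u\ge 2$ with the two $\ell^p$ embeddings of Lemma~\ref{lem:embed} on $\mathcal D_1$ and the bound $|\mathcal D_1|\le 2q^2$. The only cosmetic difference is that the paper applies the embeddings directly to $g=\mathcal{M}^{\mathrm{rd}}_{\mathbb H_1}F$ rather than to a linearization $Tf$, so your preliminary linearization step is unnecessary but harmless.
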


\begin{proof}
By Lemma~\ref{lem:rd-diagonal} (since $u\ge 2$),
\[
\|\mathcal{M}^{\mathrm{rd}}_{\mathbb H_1}F\|_{\ell^u(\mathcal D_1)}
\leq 5^{\frac{2}{u}} \,
q^{1-\frac{1}{u}}\,
\|F\|_{\ell^u(\mathbb H_1(\mathbb F_q))}.
\]

\noindent (a) 
If $v\ge u$, then Lemma~\ref{lem:embed} gives
$\|g\|_{\ell^v(\mathcal D_1)}\le \|g\|_{\ell^u(\mathcal D_1)}$ for all $g$ on $\mathcal D_1$.
Apply this to $g=\mathcal{M}^{\mathrm{rd}}_{\mathbb H_1}F$ to obtain (a).

\noindent (b)
If $v\le u$, then Lemma~\ref{lem:embed} gives
\[
\|g\|_{\ell^v(\mathcal D_1)}
\le
|\mathcal D_1|^{\frac{1}{v}-\frac{1}{u}}\,
\|g\|_{\ell^u(\mathcal D_1)}
\leq 2^{\frac{1}{v}-\frac{1}{u}} \,
q^{2(\frac{1}{v}-\frac{1}{u})}\,\|g\|_{\ell^u(\mathcal D_1)}.
\]
Apply this to $g=\mathcal{M}^{\mathrm{rd}}_{\mathbb H_1}F$ and combine with the diagonal bound above:
\[
\|\mathcal{M}^{\mathrm{rd}}_{\mathbb H_1}F\|_{\ell^v(\mathcal D_1)}
\leq  2^{\frac{1}{v}-\frac{1}{u}} \cdot 5^{\frac{2}{u}} \, 
q^{2(\frac{1}{v}-\frac{1}{u})}\,q^{1-\frac{1}{u}}\,
\|F\|_{\ell^u(\mathbb H_1(\mathbb F_q))}
= 2^{\frac{1}{v}-\frac{1}{u}} \cdot 5^{\frac{2}{u}} \, 
q^{1+\frac{2}{v}-\frac{3}{u}}\,
\|F\|_{\ell^u(\mathbb H_1(\mathbb F_q))}.
\]
This completes the proof.
\end{proof}

We now record four test functions forcing the four terms in \eqref{eq:Ard-formula}. 

\subsection{The case $A^{\mathrm{rd}}_1(u,v)\ \ge\ \frac{1}{v}$}
\begin{lemma}\label{lem:rd-lb-pointmass}
For every $1\le u,v\le\infty$, one has
\[
A^{\mathrm{rd}}_1(u,v)\ \ge\ \frac{1}{v}.
\]
\end{lemma}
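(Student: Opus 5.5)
The lower bound $A^{\mathrm{rd}}_1(u,v)\ge \frac1v$ comes from a single test function, the point mass at the origin. This is the same function used in the sharpness part of Theorem~\ref{thm:sr-l2-bound}. Set $F:=\delta_{\mathbf 0}$ with $\mathbf 0=(0,0,0)$, so that $\|F\|_{\ell^u(\mathbb H_1(\mathbb F_q))}=1$ for every $1\le u\le\infty$.

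The plan is to compute $\mathcal M^{\mathrm{rd}}_{\mathbb H_1}F$ exactly.

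\begin{enumerate}
\item The value $(\mathcal M^{\mathrm{rd}}_{\mathbb H_1}F)(\omega)$ is either $0$ or $1$. It equals $1$ precisely when some horizontal line $L$ with $\mathrm{Dir}(L)=\omega$ passes through $\mathbf 0$.
\item By the basic facts in Section~\ref{section2}, the lines through $\mathbf 0$ are exactly $L_{\mathbf 0,[a:b]}$ for $[a:b]\in\mathbb P^1(\mathbb F_q)$. Computing the refined direction with base point $\mathbf 0$ gives $\mathrm{Dir}(L_{\mathbf 0,[a:b]})=[a:b:0]$.
\item These $q+1$ refined directions are distinct. Hence $\mathcal M^{\mathrm{rd}}_{\mathbb H_1}F$ is the indicator function of a set of size $q+1$ in $\mathcal D_1$.
\end{enumerate}

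It follows that
\[
\|\mathcal M^{\mathrm{rd}}_{\mathbb H_1}F\|_{\ell^v(\mathcal D_1)}=(q+1)^{\frac1v}\ge q^{\frac1v}
\]
for $1\le v<\infty$. For $v=\infty$ the norm equals $1=q^0$, so the same inequality holds.

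Inserting this into the defining inequality \eqref{eq:Ard-def-proof} gives $q^{\frac1v}\le C_{u,v}\,q^{A^{\mathrm{rd}}_1(u,v)}$ for all odd prime powers $q$. Since $C_{u,v}$ is independent of $q$, letting $q\to\infty$ forces $A^{\mathrm{rd}}_1(u,v)\ge\frac1v$.

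There is no genuine obstacle here. The only point needing care is that the $q+1$ lines through $\mathbf 0$ carry pairwise distinct refined directions, which is immediate from the fact that their spatial directions $[a:b]$ differ.
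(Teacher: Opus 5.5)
Your proposal is correct and follows essentially the same argument as the paper: both test the point mass, observe that the $q+1$ horizontal lines through the point carry $q+1$ distinct refined directions, and conclude $\|\mathcal M^{\mathrm{rd}}_{\mathbb H_1}F\|_{\ell^v(\mathcal D_1)}\ge (q+1)^{\frac1v}$. The only differences are cosmetic. You fix the base point at $\mathbf 0$ and compute $\mathcal M^{\mathrm{rd}}_{\mathbb H_1}F$ exactly, whereas the paper uses an arbitrary $\mathbf p_\ast$ and only needs the lower bound $\ge 1$ on those directions.
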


\begin{proof}
Let $\bfp_\ast$ be any fixed point in $\mathbb H_1(\Fq)$ and take $F=\delta_{\bfp_\ast}$. Then $\|F\|_{\ell^u(\mathbb H_1(\mathbb F_q))}=1$. Note that there are exactly $q+1$ different refined directions $\omega\in \cD_1$ for the horizontal lines through $\bfp_\ast$. For such $\omega$, one has $(\mathcal M^{\mathrm{rd}}_{\mathbb H_1}F)(\omega)\geq 1$. Therefore,
\[
\|\mathcal M^{\mathrm{rd}}_{\mathbb H_1}F\|_{\ell^v(\mathcal D_1)}
\ \ge\ (q+1)^{\frac{1}{v}}\ \geq\ q^{\frac{1}{v}}.
\]
This forces $q^{A^{\mathrm{rd}}_1(u,v)}\gtrsim q^{\frac{1}{v}}$, and hence
$A^{\mathrm{rd}}_1(u,v)\ge \frac{1}{v}$.
\end{proof}

\subsection{The case $A^{\mathrm{rd}}_1(u,v)\ \ge\ 1-\frac{1}{u}$}
\begin{lemma}\label{lem:rd-lb-single-line}
For every $1\le u,v\le\infty$, one has
\[
A^{\mathrm{rd}}_1(u,v)\ \ge\ 1-\frac{1}{u}.
\]
\end{lemma}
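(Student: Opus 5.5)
The plan is to use the test function already named in the introduction for this term: the indicator of a single horizontal line. Fix any horizontal line $L\subset\mathbb H_1(\mathbb F_q)$; concretely, take $L=L_{\mathbf 0,[1:0]}=\{(s,0,0):s\in\mathbb F_q\}$. Let $\omega_0:=\mathrm{Dir}(L)\in\mathcal D_1$; with this choice, Definition~\ref{def:refined-direction} gives $\omega_0=[1:0:0]$. Set $F:=\mathbf 1_L$. Since $|L|=q$, we have
\[
\|F\|_{\ell^u(\mathbb H_1(\mathbb F_q))}=q^{\frac1u}
\]
for every $1\le u\le\infty$, with the convention $q^{0}=1$ when $u=\infty$.

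Next I would bound the maximal function from below at the single point $\omega_0$. The line $L$ itself is admissible in the maximum defining $(\mathcal M^{\mathrm{rd}}_{\mathbb H_1}F)(\omega_0)$ in \eqref{eq:Mrd}, because $\mathrm{Dir}(L)=\omega_0$. Hence
\[
(\mathcal M^{\mathrm{rd}}_{\mathbb H_1}F)(\omega_0)\ \ge\ \sum_{\mathbf p\in L}\mathbf 1_L(\mathbf p)=q .
\]
Since an $\ell^v$ norm dominates the absolute value of any single entry, it follows that $\|\mathcal M^{\mathrm{rd}}_{\mathbb H_1}F\|_{\ell^v(\mathcal D_1)}\ge q$ for every $1\le v\le\infty$.

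Finally, I would insert these two quantities into the defining inequality \eqref{eq:Ard-def-proof}. This gives
\[
q\le C_{u,v}\,q^{A^{\mathrm{rd}}_1(u,v)}\,q^{\frac1u}
\qquad\text{for all odd prime powers } q .
\]
Rearranging, $q^{1-\frac1u-A^{\mathrm{rd}}_1(u,v)}\le C_{u,v}$. Because $q$ can be taken arbitrarily large while $C_{u,v}$ is independent of $q$, the exponent must satisfy $1-\frac1u-A^{\mathrm{rd}}_1(u,v)\le 0$, that is, $A^{\mathrm{rd}}_1(u,v)\ge 1-\frac1u$.

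There is no real obstacle here; the argument mirrors Lemma~\ref{lem:lb-line}. The only point needing care is that the maximum in \eqref{eq:Mrd} ranges over lines with a prescribed refined direction. So one must confirm that the chosen line $L$ is admissible for $\omega_0=\mathrm{Dir}(L)$, which holds by the very definition of $\omega_0$ and the well-definedness of $\mathrm{Dir}$ established after Definition~\ref{def:refined-direction}.
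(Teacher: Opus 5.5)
Your proposal is correct and is essentially the paper's own proof: take $F=\mathbf 1_L$ for a single horizontal line $L$, note $\|F\|_{\ell^u}=q^{1/u}$ and $(\mathcal M^{\mathrm{rd}}_{\mathbb H_1}F)(\mathrm{Dir}(L))\ge q$, and compare exponents as $q\to\infty$. Your concrete choice $L=\{(s,0,0)\}$ with $\mathrm{Dir}(L)=[1:0:0]$ is computed correctly, and your explicit handling of $u=\infty$ and of the final exponent comparison only spells out steps the paper leaves implicit.
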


\begin{proof}
Let $L\subset\mathbb H_1(\mathbb F_q)$ be a fixed horizontal line and set $F:=\mathbf 1_L$.
Then $\|F\|_{\ell^u(\mathbb H_1(\mathbb F_q))}=|L|^{\frac{1}{u}}=q^{\frac{1}{u}}$.
Let $\omega_0:=\mathrm{Dir}(L)$.
Since the maximum in $\mathcal M^{\mathrm{rd}}_{\mathbb H_1}$ ranges over all horizontal lines
of refined direction $\omega_0$, we have
\[
(\mathcal M^{\mathrm{rd}}_{\mathbb H_1}F)(\omega_0)\ \ge\ \sum_{\mathbf p\in L}\mathbf 1_L(\mathbf p)\ =\ q.
\]
Hence, $\|\mathcal M^{\mathrm{rd}}_{\mathbb H_1}F\|_{\ell^v(\mathcal D_1)}\ge q$ for every $v$.
Therefore $q^{A^{\mathrm{rd}}_1(u,v)}\gtrsim q/q^{\frac{1}{u}}$, and the claim follows.
\end{proof}

\subsection{The case $A^{\mathrm{rd}}_1(u,v)\ \ge\ \frac{2}{v}-\frac{1}{u}$}
\begin{lemma}\label{lem:rd-lb-blocking}
For every $1\le u,v\le\infty$, one has
\[
A^{\mathrm{rd}}_1(u,v)\ \ge\ \frac{2}{v}-\frac{1}{u}.
\]
\end{lemma}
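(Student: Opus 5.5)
The introduction already names the test function: the indicator of two non-parallel affine lines in $\mathbb F_q^2$, lifted to the plane $t=0$. Let $\ell_1,\ell_2\subset\mathbb F_q^2$ be two distinct non-parallel affine lines, meeting at a single point. Set
\[
E:=\{(x,y,0):(x,y)\in\ell_1\cup\ell_2\},\qquad F:=\mathbf 1_E .
\]
Then $|E|=2q-1$, so $\|F\|_{\ell^u(\mathbb H_1(\mathbb F_q))}\sim q^{\frac1u}$. The goal is to show that $\mathcal M^{\mathrm{rd}}_{\mathbb H_1}F(\omega)\ge 1$ for $\gtrsim q^2$ refined directions $\omega$. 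That gives $\|\mathcal M^{\mathrm{rd}}_{\mathbb H_1}F\|_{\ell^v(\mathcal D_1)}\gtrsim q^{\frac2v}$, and hence $A^{\mathrm{rd}}_1(u,v)\ge \frac2v-\frac1u$.

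Recall the correspondence $\omega\mapsto\ell_\omega$, which is a bijection from $\mathcal D_1$ onto the affine lines of $\mathbb F_q^2$. By \eqref{eq_q_lines}, for each $\omega$ the lines in $\mathcal L(\omega)$ are
\[
\{(x_\omega+sa,\,y_\omega+sb,\,t+sc):s\in\mathbb F_q\},\qquad t\in\mathbb F_q .
\]
Each of these projects onto $\ell_\omega$, and the parameter $t$ freely shifts the central coordinate. So, given any point $(x,y)\in\ell_\omega$, one can choose $t$ so that the horizontal line passes through $(x,y,0)$. Consequently, if $\ell_\omega$ meets $\ell_1\cup\ell_2$ at some point $(x,y)$, then some $L\in\mathcal L(\omega)$ contains $(x,y,0)\in E$, and $\mathcal M^{\mathrm{rd}}_{\mathbb H_1}F(\omega)\ge1$. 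A single line $\ell_1$ would already suffice for this step. The reason for taking two non-parallel lines is to capture the directions parallel to $\ell_1$ as well.

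It remains to count the affine lines in $\mathbb F_q^2$ that meet $\ell_1\cup\ell_2$. Any line not parallel to $\ell_1$ meets $\ell_1$, and any line parallel to $\ell_1$ is not parallel to $\ell_2$, so it meets $\ell_2$. Hence every affine line meets $\ell_1\cup\ell_2$, and $\mathcal M^{\mathrm{rd}}_{\mathbb H_1}F(\omega)\ge1$ for all $\omega\in\mathcal D_1$. This gives
\[
\|\mathcal M^{\mathrm{rd}}_{\mathbb H_1}F\|_{\ell^v(\mathcal D_1)}\ge |\mathcal D_1|^{\frac1v}=(q^2+q)^{\frac1v}\ge q^{\frac2v}.
\]
For $v=\infty$ this reads $\ge1$, which is consistent with the formula.

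Finally, compare this with \eqref{eq:Ard-def-proof}. It yields $C_{u,v}q^{A^{\mathrm{rd}}_1(u,v)}(2q)^{\frac1u}\ge q^{\frac2v}$ for all odd prime powers $q$, and letting $q\to\infty$ forces $A^{\mathrm{rd}}_1(u,v)\ge\frac2v-\frac1u$. No step here is delicate. The only point that needs checking is the lifting step: for a given $\omega$ and a given point of $\ell_\omega$, some horizontal line with refined direction $\omega$ passes through that point at height $0$. This follows immediately from the freedom in $t$ in \eqref{eq_q_lines}.
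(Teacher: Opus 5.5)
Your proposal is correct and follows the paper's proof essentially verbatim. It uses the same test function $F=\mathbf 1_E$, with $E$ the lift of two non-parallel affine lines to the plane $t=0$, and the same observation that every $\ell_\omega$ meets $\ell_1\cup\ell_2$; lifting through \eqref{eq_q_lines} then gives $\mathcal M^{\mathrm{rd}}_{\mathbb H_1}F\ge 1$ on all of $\mathcal D_1$, hence $\|\mathcal M^{\mathrm{rd}}_{\mathbb H_1}F\|_{\ell^v(\mathcal D_1)}\ge |\mathcal D_1|^{\frac1v}\gtrsim q^{\frac2v}$ against $\|F\|_{\ell^u}\sim q^{\frac1u}$.
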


\begin{proof}
Let $\ell_1,\ell_2\subset\mathbb F_q^2$ be two non-parallel affine lines and set
$B:=\ell_1\cup \ell_2$. Then $|B|=2q-1\sim q$.
Define
\[
E:=\{(x,y,0)\in\mathbb H_1(\mathbb F_q): (x,y)\in B\},
\qquad
F:=\mathbf 1_E.
\]
Then $\|F\|_{\ell^u(\mathbb H_1(\mathbb F_q))}=|E|^{\frac{1}{u}}\sim q^{\frac{1}{u}}$.

We claim that $(\mathcal M^{\mathrm{rd}}_{\mathbb H_1}F)(\omega)\ge 1$ for every $\omega\in\mathcal D_1$.
Fix $\omega=[a:b:c]\in\mathcal D_1$, and let $\ell_\omega\subset\mathbb F_q^2$ be the corresponding affine line.
If $\ell_\omega$ is not parallel to $\ell_1$, then $\ell_\omega\cap \ell_1\neq\emptyset$; otherwise $\ell_\omega$ intersects $\ell_2$ since $\ell_2$ is not parallel to $\ell_1$.
In either case we may choose $(x_\omega,y_\omega)\in \ell_\omega\cap B$, so $(x_\omega,y_\omega,0)\in E$.
By \eqref{eq_q_lines}, the horizontal line $L_\omega:= L_{(x_\omega,y_\omega,0),[a:b]}$ has refined direction $\omega$. 
Hence
\[
(\mathcal M^{\mathrm{rd}}_{\mathbb H_1}F)(\omega)\ \ge\ \sum_{\mathbf p\in L_\omega}\mathbf 1_E(\mathbf p)\ \ge\ 1.
\]
Therefore,
\[
\|\mathcal M^{\mathrm{rd}}_{\mathbb H_1}F\|_{\ell^v(\mathcal D_1)}
\ \ge\ |\mathcal D_1|^{\frac{1}{v}}\ \sim\ q^{\frac{2}{v}},
\]
using $|\mathcal D_1|=q^2+q$.

This forces $q^{A^{\mathrm{rd}}_1(u,v)}\gtrsim q^{\frac{2}{v}}/q^{\frac{1}{u}}$, and hence
$A^{\mathrm{rd}}_1(u,v)\ge \frac{2}{v}-\frac{1}{u}$.
\end{proof}

\subsection{The case $A^{\mathrm{rd}}_1(u,v)\ \ge\ 1+\frac{2}{v}-\frac{3}{u}$}
\begin{lemma}\label{lem:rd-lb-constant}
For every $1\le u,v\le\infty$, one has
\[
A^{\mathrm{rd}}_1(u,v)\ \ge\ 1+\frac{2}{v}-\frac{3}{u}.
\]
\end{lemma}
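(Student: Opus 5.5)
We take the constant test function $F\equiv 1$ on $\mathbb H_1(\mathbb F_q)$, as announced after Theorem~\ref{thm:exact-rd}.

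First, the input norm. Since $|\mathbb H_1(\mathbb F_q)|=q^3$, we have
\[
\|F\|_{\ell^u(\mathbb H_1(\mathbb F_q))}=q^{\frac{3}{u}},
\]
with the usual convention that this equals $1$ when $u=\infty$.

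Second, the output. For every $\omega\in\mathcal D_1$ the set $\mathcal L(\omega)$ is nonempty; by \eqref{eq_q_lines} it consists of $q$ lines. Each horizontal line has exactly $q$ points, so
\[
(\mathcal M^{\mathrm{rd}}_{\mathbb H_1}F)(\omega)=q \quad\text{for every } \omega\in\mathcal D_1.
\]
Hence
\[
\|\mathcal M^{\mathrm{rd}}_{\mathbb H_1}F\|_{\ell^v(\mathcal D_1)}=q\,|\mathcal D_1|^{\frac1v}=q\,(q^2+q)^{\frac1v}\ \ge\ q^{1+\frac{2}{v}}.
\]

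Third, comparison with the definition \eqref{eq:Ard-def-proof}. Any admissible exponent must satisfy
\[
q^{1+\frac2v}\le C_{u,v}\,q^{A^{\mathrm{rd}}_1(u,v)}\,q^{\frac3u}
\]
for all odd prime powers $q$. Letting $q\to\infty$ forces $A^{\mathrm{rd}}_1(u,v)\ge 1+\frac2v-\frac3u$.

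There is no real obstacle. The only points requiring care are the endpoint conventions $u=\infty$ or $v=\infty$, where the same computation holds with $\frac1u=0$ or $\frac1v=0$, and the observation that every refined direction is realized by some horizontal line. The latter was noted after Definition~\ref{def:refined-direction} and is made explicit in \eqref{eq_q_lines}.
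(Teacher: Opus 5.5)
Your proposal is correct and matches the paper's proof: both use the constant test function $F\equiv 1$, compute $\|F\|_{\ell^u}=q^{3/u}$ and $\mathcal M^{\mathrm{rd}}_{\mathbb H_1}F\equiv q$ on $\mathcal D_1$, and compare $q\,|\mathcal D_1|^{1/v}\ge q^{1+2/v}$ with the defining inequality. Your extra remarks on the endpoint conventions $u=\infty$, $v=\infty$ and on every refined direction being realized are consistent with what the paper uses implicitly.
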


\begin{proof}
Let $F\equiv 1$ on $\mathbb H_1(\mathbb F_q)$. Then
$\|F\|_{\ell^u(\mathbb H_1(\mathbb F_q))}=|\mathbb H_1(\mathbb F_q)|^{\frac{1}{u}}=q^{\frac{3}{u}}$.
For each $\omega\in\mathcal D_1$, choose any horizontal line $L_\omega\in \cL(\omega)$.
Then $\sum\limits_{\mathbf p\in L_\omega}|F(\mathbf p)|=|L_\omega|=q$, so
$(\mathcal M^{\mathrm{rd}}_{\mathbb H_1}F)(\omega)=q$ for all $\omega\in\mathcal D_1$.
Hence
\[
\|\mathcal M^{\mathrm{rd}}_{\mathbb H_1}F\|_{\ell^v(\mathcal D_1)}
=
q\,|\mathcal D_1|^{\frac{1}{v}}
\sim
q\cdot q^{\frac{2}{v}}
=
q^{1+\frac{2}{v}}.
\]
Therefore $q^{A^{\mathrm{rd}}_1(u,v)}\gtrsim q^{1+\frac{2}{v}}/q^{\frac{3}{u}}$, giving the claim.

\end{proof}



\subsection{Proof of Theorem~\ref{thm:exact-rd}}

\begin{proof} [Proof of Theorem~\ref{thm:exact-rd}]
For every $1\le u,v\le\infty$, it follows from Lemmas
\ref{lem:rd-lb-pointmass}--\ref{lem:rd-lb-constant} that
\[
A^{\mathrm{rd}}_1(u,v)\ \ge\
\max\Bigg\{\frac1v,\ 1-\frac1u,\ \frac{2}{v}-\frac1u,\ 1+\frac{2}{v}-\frac{3}{u}\Bigg\}.
\]

\medskip

For the matching upper bound, Lemmas
\ref{lem:rd-upper-u-le-2} and \ref{lem:rd-upper-u-ge-2} give
\[
A^{\mathrm{rd}}_1(u,v)\ \le\
\max\Bigg\{\frac1v,\ 1-\frac1u,\ \frac{2}{v}-\frac1u,\ 1+\frac{2}{v}-\frac{3}{u}\Bigg\},
\]
and the theorem follows. 
\end{proof}

\section{Proof of Theorems \ref{thm:Nm-refined} and \ref{co-1mar}}\label{section10}

\begin{proof}[Proof of Theorem \ref{thm:Nm-refined}]
Let $F:=\mathbf 1_E$. For each $\omega\in\Omega$, the hypothesis gives a horizontal line $L$
with $\mathrm{Dir}(L)=\omega$ and $|E\cap L|\ge m$, hence
\[
(\mathcal{M}^{\mathrm{rd}}_{\mathbb H_1}F)(\omega)\ge m.
\]
Therefore,
\[
\|\mathcal{M}^{\mathrm{rd}}_{\mathbb H_1}F\|_{\ell^v(\mathcal D_1)}
\ge
\|\mathcal{M}^{\mathrm{rd}}_{\mathbb H_1}F\|_{\ell^v(\Omega)}
\ge
m\,\|\mathbf 1_\Omega\|_{\ell^v(\Omega)}
=
m\,|\Omega|^{\frac{1}{v}},
\]
with the usual convention $|\Omega|^{ \frac{1}{\infty}}=1$.

On the other hand, by \eqref{eq:Ard-def-proof},
\[
\|\mathcal{M}^{\mathrm{rd}}_{\mathbb H_1}F\|_{\ell^v(\mathcal D_1)}
\le
C_{u,v}\,q^{A^{\mathrm{rd}}_1(u,v)}\,
\|F\|_{\ell^u(\mathbb H_1(\mathbb F_q))}.
\]
Since $\|F\|_{\ell^u(\mathbb H_1(\mathbb F_q))}=|E|^{\frac{1}{u}}$, we obtain
\[
m\,|\Omega|^{\frac{1}{v}}
\le
C_{u,v}\,q^{A^{\mathrm{rd}}_1(u,v)}\,|E|^{\frac{1}{u}}.
\]
Rearranging gives \eqref{eq:Nm-uv-lower}. This completes the proof.
\end{proof}

\begin{proof}[Proof of Theorem \ref{co-1mar}]
By Theorem~\ref{thm:exact-rd},
\[
A^{\mathrm{rd}}_1\!\left(\frac{s}{s-1},s\right)=\frac1s.
\]
Applying \eqref{eq:Ard-def-proof} with
$u=\frac{s}{s-1}$ and $v=s$, we have
\[
\|M_E\|_{\ell^s(\mathcal D_1)}
\le
C_{\frac{s}{s-1},\,s}\,q^{\frac{1}{s}}\,|E|^{1-\frac{1}{s}},
\]
and raising to the $s$th power gives the desired result.
\end{proof}

\section{Discussions and examples}\label{section11}
\subsection{Examples distinguishing affine Kakeya and refined-direction horizontal Kakeya}
\label{sec:examples-affine-vs-refined}

In this subsection, we record examples showing that the usual affine Kakeya in $\mathbb{F}_q^3$ and the full
refined-direction horizontal Heisenberg Kakeya in $\mathbb{H}_1(\mathbb{F}_q)$ are
incomparable: neither condition implies the other.

We use the standard coordinate identification of sets
\[
\mathbb{H}_1(\mathbb{F}_q)\cong \mathbb{F}_q^3,\qquad \mathbf{p}=(x,y,t).
\]
Recall that the set of refined directions is
\[
\mathcal{D}_1=\{[a:b:c]\in\mathbb{P}^2(\mathbb{F}_q): (a,b)\neq (0,0)\}.
\]
{ 
If \(L\) is a horizontal line, \((a,b)\neq(0,0)\) is a chosen representative of its horizontal direction, and \(\mathbf p_0=(x_0,y_0,t_0)\in L\), then
\[
\mathrm{Dir}(L)=[a:b:x_0b-y_0a]\in\mathcal{D}_1.
\]
}
As discussed at the beginning of Section \ref{section8}, every $\omega\in\mathcal{D}_1$ admits a unique
normal form
\[
\omega=[1:m:\gamma]\quad (m,\gamma\in\mathbb{F}_q),
\qquad\text{or}\qquad
\omega=[0:1:\gamma]\quad (\gamma\in\mathbb{F}_q),
\]
and the family of horizontal lines with refined direction $\omega$ is precisely
\begin{align*}
L_{\omega,\tau}
&=\{(x,\ \textcolor{black}{mx-\gamma},\ \tau+\gamma x): x\in\mathbb{F}_q\},
\qquad &&\text{if }\omega=[1:m:\gamma],\\
L_{\omega,\tau}
&=\{(\textcolor{black}{\gamma},\ y,\ \tau+\gamma y): y\in\mathbb{F}_q\},
\qquad &&\text{if }\omega=[0:1:\gamma],
\end{align*}
where $\tau\in\mathbb{F}_q$ parametrizes the $q$ distinct horizontal lines of refined direction $\omega$.

The first example shows an affine Kakeya set that is not full refined-direction horizontal Kakeya.  

\begin{example}
\label{ex:affine-not-refined}
Fix $\omega_0\in\mathcal{D}_1$. \textcolor{black}{Write $\omega_0$ in normal form as $\omega_0=[1:m_0:\gamma_0]$ or $\omega_0=[0:1:\gamma_0]$.}
Define a subset $S_{\omega_0}\subset\mathbb{H}_1(\mathbb{F}_q)$ by
\[
S_{\omega_0}:=
\begin{cases}
\{(0,\ \textcolor{black}{-\gamma_0},t): t\in\mathbb{F}_q\}, & \text{if }\omega_0=\textcolor{black}{[1:m_0:\gamma_0]},\\[0.2em]
\{(\textcolor{black}{\gamma_0},0,t): t\in\mathbb{F}_q\}, & \text{if }\omega_0=\textcolor{black}{[0:1:\gamma_0]}.
\end{cases}
\]
Let $E:=\mathbb{H}_1(\mathbb{F}_q)\setminus S_{\omega_0}$. Then, viewed as a subset of
$\mathbb{F}_q^3$, the set $E$ is an affine Kakeya set, but $E$ is not a full refined-direction
horizontal Heisenberg Kakeya set in the sense of Definition~\ref{def:slope-refined-kakeya}.
\end{example}

\begin{proof}
We first verify that $E$ is not full refined-direction horizontal Heisenberg Kakeya.  Consider the
family $\{L_{\omega_0,\tau}\}_{\tau\in\mathbb{F}_q}$ of all horizontal lines with refined direction
$\omega_0$.
If $\omega_0=\textcolor{black}{[1:m_0:\gamma_0]}$, then for every $\tau\in\mathbb{F}_q$ one has
\[
(0,\ \textcolor{black}{-\gamma_0},\tau)\in L_{\omega_0,\tau}\cap S_{\omega_0}.
\]
If $\omega_0=\textcolor{black}{[0:1:\gamma_0]}$, then for every $\tau\in\mathbb{F}_q$ one has
\[
(\textcolor{black}{\gamma_0},0,\tau)\in L_{\omega_0,\tau}\cap S_{\omega_0}.
\]
In either case, every horizontal line $L$ with $\mathrm{Dir}(L)=\omega_0$ meets $S_{\omega_0}$, and
hence no such line can be contained in $E$.  Therefore, $E$ fails the defining condition in
Definition~\ref{def:slope-refined-kakeya} at the refined direction $\omega_0$.

We next show that $E$ is affine Kakeya in $\mathbb{F}_q^3$.  Fix an ambient direction
$[\mathbf{v}]\in\mathbb{P}^2(\mathbb{F}_q)$ and choose a representative $\mathbf{v}\in\mathbb{F}_q^3\setminus\{0\}$.
The affine lines in direction $[\mathbf{v}]$ are the cosets of the 1-dimensional subspace
$\langle \mathbf{v}\rangle$, and these $q^2$ lines form a partition of $\mathbb{F}_q^3$.
Since $|S_{\omega_0}|=q$, at most $q$ of these $q^2$ lines meet $S_{\omega_0}$ (each point lies on
exactly one line of direction $[\mathbf{v}]$).  Hence, there exists an affine line $\ell$ of ambient
direction $[\mathbf{v}]$ such that $\ell\cap S_{\omega_0}=\varnothing$, and therefore $\ell\subset E$.
As $[\mathbf{v}]$ was arbitrary, $E$ contains an affine line in every ambient direction, i.e.\ $E$ is
an affine Kakeya set.
\end{proof}

The second example gives a full refined-direction horizontal Kakeya set that is not affine Kakeya.

\begin{example}
\label{ex:refined-not-affine}
\textcolor{black}{Assume $q$ is odd and $q>3$.}  For each $\omega\in\mathcal{D}_1$ in normal form, define a horizontal line $L_\omega$ by
\[
L_{[1:m:\gamma]}:=\{(x,\ \textcolor{black}{mx-\gamma},\ m^2+\gamma x): x\in\mathbb{F}_q\},
\qquad
L_{[0:1:\gamma]}:=\{(\textcolor{black}{\gamma},\ y,\ \gamma y): y\in\mathbb{F}_q\}.
\]
Let
\[
E:=\bigcup_{\omega\in\mathcal{D}_1} L_\omega\ \subset\ \mathbb{H}_1(\mathbb{F}_q).
\]
Then, $E$ is a full refined-direction horizontal Heisenberg Kakeya set (Definition~\ref{def:slope-refined-kakeya}),
but $E$ is not an affine Kakeya set in $\mathbb{F}_q^3$.
\end{example}

\begin{proof}
It is clear that $E$ is full refined-direction horizontal Heisenberg Kakeya.

We now show that $E$ is not affine Kakeya by proving that $E$ contains no vertical affine line, i.e.\
no affine line of direction $[0:0:1]\in\mathbb{P}^2(\mathbb{F}_q)$.  Fix $(x_0,y_0)\in\mathbb{F}_q^2$
and consider the vertical fiber
\[
V_{(x_0,y_0)}:=\{(x_0,y_0,t): t\in\mathbb{F}_q\}.
\]
It suffices to show that $V_{(x_0,y_0)}\nsubseteq E$ for every $(x_0,y_0)$.

Define
\[
\mathcal T(x_0,y_0):=\{t\in\mathbb{F}_q:\ (x_0,y_0,t)\in E\}.
\]
We bound $|\mathcal T(x_0,y_0)|$.  First consider the contribution from directions of the form $\omega=[1:m:\gamma]$.
For a fixed $m\in\mathbb{F}_q$, the point $(x_0,y_0,t)$ can lie on $L_{[1:m:\gamma]}$ only if
$y_0=\textcolor{black}{mx_0-\gamma}$, i.e.\ $\textcolor{black}{\gamma=mx_0-y_0}$.  For this value of $\gamma$, the point of
$L_{[1:m:\gamma]}$ with $x=x_0$ has
\[
t=m^2+\gamma x_0
=
\textcolor{black}{m^2+(mx_0-y_0)x_0}
=
\textcolor{black}{m^2+x_0^2 m-x_0y_0}.
\]
Hence, the set of $t$-values arising from all such $\omega$ is contained in
\[
A(x_0,y_0):=
\left\{\textcolor{black}{m^2+x_0^2 m-x_0y_0}:\ m\in\mathbb{F}_q\right\}.
\]
Since $q$ is odd, completing the square shows that $A(x_0,y_0)$ is a translate of the set of squares
in $\mathbb{F}_q$, and therefore
\[
|A(x_0,y_0)|\le \frac{q+1}{2}.
\]

Next consider directions of the form $\omega=[0:1:\gamma]$.  For any $x_0\in\mathbb{F}_q$ there is a
unique $\textcolor{black}{\gamma=x_0}$ such that $L_{[0:1:\gamma]}$ has first coordinate equal to $x_0$.  Intersecting
\[
L_{[0:1:\textcolor{black}{x_0}]}=\big\{ \big(x_0,\ y,\ \textcolor{black}{x_0 y} \big): y\in\mathbb{F}_q \big\}
\]
with $y=y_0$ yields at most one additional value $\textcolor{black}{t= x_0 y_0}$.

Consequently,
\[
|\mathcal T(x_0,y_0)|
\le
|A(x_0,y_0)|+1
\le
\frac{q+1}{2}+1
=
\frac{q+3}{2}
<
q
\qquad\text{since }q>3.
\]
Thus, $V_{(x_0,y_0)}\nsubseteq E$ for all $(x_0,y_0)$, so $E$ contains no vertical affine line.
In particular, $E$ does not contain an affine line in the ambient direction $[0:0:1]$, and hence $E$
is not an affine Kakeya set in $\mathbb{F}_q^3$.
\end{proof}

The preceding examples show that, although $\mathbb{H}_1(\mathbb{F}_q)$ and $\mathbb{F}_q^3$ coincide
as sets, the affine Kakeya property in $\mathbb{F}_q^3$ and the full refined-direction horizontal
Heisenberg Kakeya property in $\mathbb{H}_1(\mathbb{F}_q)$ are incomparable. The next example shows that the $\ell^3\to\ell^3$ bound is sharp.

\begin{example} \label{ex:rd-l3-sharp}
Let $F\equiv 1$ on $\mathbb{H}_1(\mathbb{F}_q)$, i.e.\ $F=\mathbf{1}_{\mathbb{H}_1(\mathbb{F}_q)}$.
Then
\[
\|\mathcal{M}^{\mathrm{rd}}_{\mathbb{H}_1}F\|_{\ell^3(\mathcal{D}_1)}
=
q\,|\mathcal{D}_1|^{\frac{1}{3}}
\sim
q^{\frac{5}{3}},
\qquad
\|F\|_{\ell^3(\mathbb{H}_1(\mathbb{F}_q))}
=
|\mathbb{H}_1(\mathbb{F}_q)|^{\frac{1}{3}}
=
q.
\]
Consequently,
\[
\frac{\|\mathcal{M}^{\mathrm{rd}}_{\mathbb{H}_1}F\|_{\ell^3(\mathcal{D}_1)}}
{\|F\|_{\ell^3(\mathbb{H}_1(\mathbb{F}_q))}}
=
|\mathcal{D}_1|^{\frac{1}{3}}
=
(q^2+q)^{\frac{1}{3}}
\geq
q^{\frac{2}{3}}.
\]
In particular, any estimate of the form
\[
\|\mathcal{M}^{\mathrm{rd}}_{\mathbb{H}_1}F\|_{\ell^3(\mathcal{D}_1)}
\le C\,q^\alpha\,\|F\|_{\ell^3(\mathbb{H}_1(\mathbb{F}_q))}
\]
(valid uniformly in $q$) forces $\alpha\ge \frac23$. Hence, the exponent $q^{\frac{2}{3}}$ in the
$\ell^3\to\ell^3$ bound is sharp up to absolute constants.
\end{example}

\begin{proof}
Fix $\omega\in\mathcal{D}_1$.  As above, there exists a horizontal line $L$ with $\mathrm{Dir}(L)=\omega$,
and $|L|=q$.  Since $F\equiv 1$, it follows that
\[
(\mathcal{M}^{\mathrm{rd}}_{\mathbb{H}_1}F)(\omega)
=
\max_{\substack{L\ \mathrm{horizontal}\\ \mathrm{Dir}(L)=\omega}}
\sum_{\mathbf{p}\in L}|F(\mathbf{p})|
=
q.
\]
Thus, $\mathcal{M}^{\mathrm{rd}}_{\mathbb{H}_1}F\equiv q$ on $\mathcal{D}_1$, and therefore
\[
\|\mathcal{M}^{\mathrm{rd}}_{\mathbb{H}_1}F\|_{\ell^3(\mathcal{D}_1)}
=
\Bigl(\sum_{\omega\in\mathcal{D}_1} q^3\Bigr)^{\frac{1}{3}}
=
q\,|\mathcal{D}_1|^{\frac{1}{3}}.
\]
Also $|\mathbb{H}_1(\mathbb{F}_q)|=q^3$, so $\|F\|_{\ell^3(\mathbb{H}_1(\mathbb{F}_q))}=q$.

Finally, $|\mathcal{D}_1|=q^2+q$, hence $|\mathcal{D}_1|^{\frac{1}{3}}=(q^2+q)^{\frac{1}{3}}\geq q^{\frac{2}{3}}$, which implies the stated sharpness.
\end{proof}

\subsection{A heuristic connection with the affine Kakeya problem in $\mathbb{F}_q^3$}\label{sec11.2}
Let \(E\subset \mathbb F_q^3\) be an affine Kakeya set, and identify \(\mathbb F_q^3\) with the underlying set of \(\mathbb H_1(\mathbb F_q)\).
Write \(\mathcal D_1=\mathbb P^2(\mathbb F_q)\setminus\{[0:0:1]\}\) for the set of non-vertical directions, and decompose
\(\mathcal D_1=\Omega_1\sqcup\Omega_2\) as follows: \(\Omega_1\) consists of those \(\omega\in\mathcal D_1\) for which \(E\) contains a horizontal line
\(L\subset \mathbb H_1(\mathbb F_q)\) with \(\mathrm{Dir}(L)=\omega\), and \(\Omega_2:=\mathcal D_1\setminus\Omega_1\).

Theorem~\ref{thm:Nm-refined} immediately controls the \(\Omega_1\) contribution. More precisely,
\[
|E|\ \gtrsim\ q\,|\Omega_1|.
\]
In particular, \(|\Omega_1|\gtrsim q^2\) already forces \(|E|\gtrsim q^3\).
Since \(|\mathcal D_1|=q^2+q\), at least one of \(\Omega_1\) or \(\Omega_2\) has size \(\gtrsim q^2\). Furthermore, note that the second chart $[0:1:\gamma]$ contains at most $q$ directions. Denote by $\Omega_2'$ the set of directions in $\Omega_2$ with first chart $[1:m:\gamma]$. For the purpose of proving a lower bound of
order \(q^3\), it suffices to treat the complementary range
\begin{equation}\label{eq:Omega2-large}
|\Omega_2'|\ \gtrsim\ q^2.
\end{equation}
In this sense, the affine Kakeya problem in \(\mathbb F_q^3\) is reduced to understanding an \(\Omega_2'\)-affine Kakeya configuration, in which a large
proportion of directions is realized only by non-horizontal lines in \(\mathbb H_1(\mathbb F_q)\).

The advantage of this viewpoint is that the $\Omega_2'$ lines satisfy a concrete structural constraint.
Fix $\omega=[a:b:c]\in\Omega_2$ with $(a,b)\neq(0,0)$, and let
\[
\ell_\omega=\{(x_0,y_0,t_0)+s(a,b,c):s\in\F_q\}\subset E
\]
be a Kakeya line of ambient direction $\omega$ contained in $E$.
By definition of $\Omega_2'$, the line $\ell_\omega$ is not horizontal in $\mathbb H_1(\F_q)$.
Equivalently, its basepoint fails the horizontality relation
\[
bx_0-ay_0\ \neq\ c.
\]
Thus, the Kakeya line in direction $[a:b:c]$ is forced to choose its basepoint away from the affine line
$\{(x,y)\in\F_q^2:\ bx-ay=c\}$ in the $(x,y)$ plane. We refer to this as a basepoint-avoidance constraint.

To quantify this failure of horizontality in a way that is intrinsic to the affine line (independent of the chosen point on the line and independent of scaling of $(a,b,c)$), we define a scalar parameter $\mu(\ell)\in\F_q$ using the normal-form coordinates on $\mathcal D_1$.
Write the direction of $\ell$ in the unique normal form $\omega=[1:m:\gamma]$ for $m,\gamma\in\F_q$. Pick any point $(x_0,y_0,t_0)\in\ell$, and set
\[
\mu(\ell)\;:=\;\gamma-(mx_0-y_0)\in\F_q.
\]
A direct check shows that $\mu(\ell)$ does not depend on the choice of $(x_0,y_0,t_0)\in\ell$.
Moreover, $\mu(\ell)=0$ if and only if $\ell$ is horizontal in $\mathbb H_1(\F_q)$. In particular, for $\omega\in\Omega_2$ with the first chart, we necessarily have $\mu(\ell_\omega)\in\F_q^*$.

Now fix a choice of the Kakeya line $\ell_\omega\subset E$ for each $\omega\in\Omega_2'$, and define
\[
k(\omega):=\mu(\ell_\omega)\in\F_q^*,
\qquad
\Lambda_k:=\{\omega\in\Omega_2':\ k(\omega)=k\}.
\]
Then, $\Omega_2'=\bigsqcup\limits_{k\in\F_q^*}\Lambda_k$ is a partition.

For a fixed $k\in\F_q^*$, lines with $\mu(\ell)=k$ can be straightened into horizontal lines by an explicit mapping:
 $(x,y,t)\mapsto(x,y,t-kx)$.
Thus, each slice $\Lambda_k$ may be viewed (after such a mapping) as contributing a horizontal Kakeya configuration, to which Theorem~\ref{thm:sr-l2-bound} applies.
In particular, if $|\Lambda_k|\gtrsim q^2$ for some $k$, then this immediately forces $|E|\gtrsim q^3$.

In the remaining range, where $|\Omega_2'|\gtrsim q^2$, but each individual slice $|\Lambda_k|$ is much smaller than $q^2$, one must exploit how the different $\mu$-slices overlap inside $E$.
Equivalently, one needs a mechanism that controls cross-incidences between the families of lines with $\mu=k$ and $\mu=k'$ for $k\neq k'$.
This is the point at which the basepoint-avoidance constraint may provide additional structure not visible in the usual affine formulation.

{\bf Acknowledgement.} 
D. T. Tran would like to thank the Vietnam Institute for Advanced Study in Mathematics (VIASM) for its warm hospitality and excellent research environment, where part of this work was completed during his stay. A.P. is a member of the Istituto Nazionale di Alta Matematica (INdAM), Gruppo Nazionale per l'Analisi Matematica, la Probabilità e le loro Applicazioni (GNAMPA), and is supported by the University of Trento and the INdAM-GNAMPA 2025 Project \emph{Structure of sub-Riemannian hypersurfaces in Heisenberg groups}, CUP ES324001950001.

\end{document}